\documentclass[a4paper,11pt]{amsart}
\usepackage[backend=bibtex,style=numeric,doi=false,isbn=false,url=false,eprint=false,giveninits=true]{biblatex}
\AtEveryBibitem{\clearfield{pages}}
\renewbibmacro*{issue+date}{%
  \ifboolexpr{not test {\iffieldundef{year}} or not test {\iffieldundef{issue}}}
    {\printtext[parens]{%
       \iffieldundef{issue}
         {\usebibmacro{date}}
         {\printfield{issue}%
          \setunit*{\addspace}%
          \usebibmacro{date}}}}
    {}%
  \newunit}
\renewbibmacro{in:}{}
\addbibresource{References.bib}
\usepackage{amsmath}
\usepackage{amsthm,amsfonts,amssymb,graphics}
\usepackage{pgfplots}

\usepackage{hyperref}
\usepackage{color}
\usepackage[normalem]{ulem}
\usepackage{enumerate}
\usepackage{a4wide}
\usepackage{multirow}
\usepackage{booktabs}
\usepackage[foot]{amsaddr}
\usetikzlibrary{calc}
\usetikzlibrary{calc}

\usepackage[T1]{fontenc}
\usepackage[utf8]{inputenc}
\usepackage{subcaption}
\usepackage{dsfont}
\usepackage{tikz}

\newtheorem{theorem}{Theorem}[section]
\newtheorem{lemma}[theorem]{Lemma}
\newtheorem{proposition}[theorem]{Proposition}
\newtheorem{corollary}[theorem]{Corollary}

\newtheorem{assumption}[theorem]{Assumption}

\theoremstyle{remark}
\newtheorem{remark}[theorem]{Remark}
\newtheorem{definition}[theorem]{Definition}

\numberwithin{equation}{section}

\mathchardef\mhyphen="2D

\begin{document}

\title[Smoothness/monotonicity of the excursion set density of Gaussian fields]{Smoothness and monotonicity of the excursion set density of planar Gaussian fields}
\author{Dmitry Beliaev\textsuperscript{1}}
\address{\textsuperscript{1}Mathematical Institute, University of Oxford}
\email{belyaev@maths.ox.ac.uk}
\author{Michael McAuley\textsuperscript{1,2}}
\email{mcauley@maths.ox.ac.uk}
\address{\textsuperscript{2}Present address:  Department of Mathematics and Statistics, University of Helsinki.} \email{michael.mcauley@helsinki.fi}
\author{Stephen Muirhead\textsuperscript{3}}
\address{\textsuperscript{3}Department of Mathematics, King's College London\\
	\emph{Present address: School of Mathematical Sciences, Queen Mary University of London}}
\email{s.muirhead@qmul.ac.uk}
\thanks{The authors thank an anonymous referee for their careful reading of the manuscript, for making us aware of \cite{maxwell1870hills} and, in particular, for pointing out an error in Lemma~\ref{l:conditional one arm decay}. The authors also thank Igor Wigman and Ben Hambly for helpful comments on a slightly different version of this work. The first author was supported by the Engineering \& Physical Sciences Research Council (EPSRC) Fellowship EP/M002896/1.}
\subjclass[2010]{60G60, 60G15, 58K05}
\keywords{Gaussian fields; nodal set; level sets; critical points}

%\date{\today}

\begin{abstract}
Nazarov and Sodin have shown that the number of connected
components of the nodal set of a planar Gaussian field in a ball of
radius $R$, normalised by area, converges to a constant as $R\to \infty $.
This has been generalised to excursion/level sets at arbitrary levels,
implying the existence of functionals $c_{ES}(\ell )$ and $c_{LS}(\ell )$
that encode the density of excursion/level set components at the level
$\ell $. We prove that these functionals are continuously differentiable
for a wide class of fields. This follows from a more general result,
which derives differentiability of the functionals from the decay of
the probability of `four-arm events' for the field conditioned to have
a saddle point at the origin. For some fields, including the important
special cases of the Random Plane Wave and the Bargmann-Fock field, we
also derive stochastic monotonicity of the conditioned field, which allows
us to deduce regions on which $c_{ES}(\ell )$ and $c_{LS}(\ell )$ are
monotone.
\end{abstract}

\maketitle

% Article text
\section{Introduction}
\label{s:introduction}

Let $f:\mathbb{R}^{2}\to \mathbb{R}$ be a continuous stationary Gaussian
field with zero mean and covariance function
$K:\mathbb{R}^{2}\to \mathbb{R}$ defined by
$K(x)=\mathbb{E}(f(x)f(0))$. We are interested in the geometric properties
of the (upper-)excursion sets and level sets of this field, defined respectively
as
\begin{displaymath}
\left \{  x\in \mathbb{R}^{2}:f(x)\geq \ell \right \}  \quad \text{and}
\quad \left \{  x\in \mathbb{R}^{2}:f(x)=\ell \right \}
\end{displaymath}
for $\ell \in \mathbb{R}$. Specifically, we are interested in the number
of connected components of these sets in a large domain.

Smooth Gaussian fields are used to model spatial phenomena across wide
ranging domains of science, such as quantum chaos, medical imaging and
oceanography (see
\cite{jain2017nodal,worsley1996unified,azais2009level} respectively). As
a particular example; cosmological theories predict that the Cosmic Microwave\vadjust{\goodbreak}
Background Radiation observed on Earth can be well modelled as a realisation
of a stationary Gaussian field on the two-dimensional sphere. Topological
and geometric quantities provide a useful way of testing this prediction,
which has important physical implications. In particular,
\cite{pranav2018unexpected} compares the number of excursion set components
of the observed background radiation to the corresponding number for simulated
Gaussian fields at a range of levels. A better understanding of the statistical
properties of the number of excursion sets of Gaussian fields could therefore
have consequences for such analysis.

Unlike certain other geometric functionals (e.g.\ the volume or Euler characteristic
of excursion sets), the number of connected components is inherently difficult
to study because it is non-local: the number of components in a domain
cannot be counted by partitioning the domain and simply counting the number
of components in each sub-domain. Nazarov and Sodin
\cite{SodinNazarov2015asymptotic} used an ergodic argument to study the
asymptotics of this quantity. Specifically, if $f$ is an ergodic Gaussian
field satisfying some regularity assumptions, $B(R)$ is the ball of radius
$R>0$ centred at the origin, and $N_{LS}(R,0)$ is the number of components
of the nodal set $\{x\in \mathbb{R}^{2}:f(x)=0\}$ contained in
$B(R)$, then there is a constant $c_{LS}(0) \ge 0$ such that
\begin{equation}
\label{e:ns}
\frac{N_{LS}(R,0)}{\pi R^{2}}\to c_{LS}(0)
\end{equation}
as $R\to \infty $, where convergence occurs in $L^{1}$ and almost surely.
Although this result was stated only for the nodal set, the arguments in
\cite{SodinNazarov2015asymptotic} go through verbatim for excursion/level
sets at arbitrary levels $\ell $; the respective limiting constants, denoted
by $c_{ES}(\ell )$ and $c_{LS}(\ell )$, can be interpreted as the density
of excursion/level set components per unit area.

In this paper we consider properties of $c_{ES}(\ell )$ and
$c_{LS}(\ell )$ viewed as functions of the level. It was shown in
\cite{Beliaev2018Number} that $c_{ES}$ and $c_{LS}$ are absolutely continuous.
Our main results (Theorems \ref{t:differentiability of c_{LS}},~\ref{t:Differentiability equivalence}
and Corollary~\ref{c:Differentiability equivalence}) show that, for a wide
class of fields, the continuous differentiability of $c_{ES}$ and
$c_{LS}$ at $\ell $ is equivalent to the statement that, if the field is
conditioned to have a saddle point at the origin at level $\ell $, then
almost surely the `arms' of the saddle (i.e.\ the four level lines that
emanate from the saddle point) do not connect the origin to infinity. Since
we can prove that the latter property holds for many fields, we deduce
the continuous differentiability of the density functionals.

Recent work has established that, in many circumstances, the geometry of
Gaussian excursion sets exhibits similar behaviour to that of discrete percolation
models \cite{beffara2017percolation}. In particular, for a wide class of
fields, it has been shown that the connectivity of the excursion sets exhibits
a sharp phase transition at $\ell =0$
\cite{rivera2017critical,Muirhead2018sharp}. Our results can therefore
be compared to what is known, and conjectured, about the analogous density
functionals for discrete percolation models. Consider Bernoulli bond percolation
on the integer lattice, defined by declaring the edges of
$\mathbb{Z}^{d}$ to be open independently with probability $p$ and closed
otherwise (see \cite{grimmett1999percolation} for background on this model).
Let $K_{n}$ denote the number of open clusters that are contained in
$[-n,n]^{d}$. Then it is known (\cite[Chapter 4]{grimmett1999percolation})
that
\begin{displaymath}
\frac{K_{n}}{(2n)^{d}} \to \kappa (p)
\end{displaymath}
as $n\to \infty $, almost surely and in $L^{1}$. This is a direct analogue
of \eqref{e:ns}, and is also proven using an ergodic argument. The smoothness
of $\kappa $ is of interest because it is related to the percolation phase
transition. Specifically, it is conjectured in the physics literature that
$\kappa $ is analytic on $[0,1]\backslash \{p_{c}\}$ and twice but not
three times differentiable at $p_{c}$, where $p_{c}\in (0,1)$ is the critical
probability for the model; this reflects the values of certain `critical
exponents' which are believed to be universal for percolation models (see
\cite[Chapter 9]{grimmett1999percolation}). What has been shown rigorously,
is that, for all $d \ge 2$, $\kappa $ is analytic on $[0,p_{c})$ and smooth
on $(p_{c},1]$, and in the case $d=2$ it is further known that
$\kappa $ is analytic on $(p_{c}, 1]$ and at least twice differentiable
at $p_{c}$ (see \cite[Chapter 4]{grimmett1999percolation}). Somewhat weaker
results have been derived for other percolation models, including the Poisson-Boolean
model and `spread-out' percolation models \cite{Bezuidenhout1998}.

Since the connectivity of the excursion sets of a wide class of planar
Gaussian fields is conjectured, and in some cases known, to undergo a phase
transition at $\ell = 0$ that is analogous to the phase transition at
$p_{c}$ for Bernoulli percolation (see
\cite{beffara2017percolation,beliaev2017russo,rivera2017quasi,rivera2017critical,Muirhead2018sharp}),
it is natural to conjecture that, for such fields, $c_{ES}$ and
$c_{LS}$ are also analytic on $\mathbb{R}\backslash \{0\}$ and twice but
not three times differentiable at $0$. Our proof of the continuous differentiability
of $c_{ES}$ and $c_{LS}$ can be seen as a first step in this direction.

Despite the connections to classical percolation theory, the method we
use to prove differentiability of the density functionals is quite different.
In Bernoulli percolation, the starting point is the equality
\begin{displaymath}
\kappa (p)=\mathbb{E}_{p} \left (\lvert C\rvert ^{-1} \right ),
\end{displaymath}
where $\lvert C\rvert $ is the number of vertices in the open cluster at
the origin. By enumerating clusters, this can be expressed as a power series
in $p$, and the smoothness of $\kappa $ can be deduced from bounds on the
coefficients in terms of connection probabilities for the cluster at the
origin.

This approach does not readily generalise to the setting of Gaussian fields:
whilst it can be shown that
\begin{displaymath}
c_{ES}(\ell )=\mathbb{E}\left (\mathrm{Vol}(C)^{-1}\mathds{1}_{f(0)>
	\ell }\right ) ,
\end{displaymath}
where $\mathrm{Vol}(C)$ is the volume of the component of
$\left \{  x\in \mathbb{R}^{2}:f(x)\geq \ell \right \}  $ containing the origin,
it is not known whether the density of $(\mathrm{Vol}(C), f(0))$ is jointly
continuous (\cite{beliaev2016volume} studies a kind of `ergodic' density
for $\mathrm{Vol}(C)$ at the zero level). Instead, our proof of differentiability
uses an integral representation for $c_{ES}$ and $c_{LS}$ that was developed
in \cite{Beliaev2018Number} (see Theorem~\ref{t:integral equality}), although
we still rely on the decay of certain `connection probabilities' for the
field $f$ conditioned to have a saddle point at the origin. These connections
are the equivalent of `four-arm events' in percolation, which play an important
role in this theory (e.g., in the analysis of noise sensitivity
\cite{garban2014noise}).

Our study of the integral representation for $c_{ES}$ and $c_{LS}$ also
allows us to derive certain montonicity properties of these functionals
(see Propositions~\ref{p:RPW monotonicity}--\ref{p:Isotropic monotonicity});
these results are of independent interest, and are a key input to proving
lower bounds on the variance of the number of excursion/level sets of Gaussian
fields (see Remark~\ref{r:Variance bound}).

\section{Main results}
\label{s:Main results}

Throughout the paper we consider a planar Gaussian field satisfying the
following assumption:
\begin{assumption}%
	\label{a:minimal}
	The Gaussian field $f:\mathbb{R}^{2}\to \mathbb{R}$ is stationary and
	centred with $\mathrm{Var}(f(0))=1$ and satisfies;
	\begin{enumerate}
		\item The covariance function $K\in C^{4+\eta ^{\prime }}$ for some
		$\eta ^{\prime }>0$,
		\item $\nabla ^{2} f(0)$ is a non-degenerate Gaussian vector (it is conventional
		to treat $\nabla ^{2} f(0)$ as a three-dimensional vector, ignoring degeneracy
		due to symmetry),
		\item For any $x\in \mathbb{R}^{2}$, if $f(x)-f(0)$ is a non-degenerate
		Gaussian variable then the Gaussian vector
		$(\nabla f(x),\nabla f(0),f(x)-f(0))$ is non-degenerate.
	\end{enumerate}
\end{assumption}
By Kolmogorov's theorem (\cite[Theorem~1.4.2]{RFG} and
\cite[Theorem~3.17]{hairer2009introduction}),
$K\in C^{4+\eta ^{\prime }}$ implies that
$f\in C^{2+\eta }_{\text{loc}}(\mathbb{R}^{2})$ almost surely for any
$\eta \in (0,\eta ^{\prime }/2)$, and we fix such an $\eta $ for our analysis.

Since $K$ is continuous and positive definite, Bochner's theorem
\cite{bochner1992monotone} states that it is the Fourier transform of a
measure $\mu $ which is known as the spectral measure of the field:
\begin{displaymath}
K(x)=\int _{\mathbb{R}^{2}}e^{it\cdot x}d\mu (t).
\end{displaymath}
We can alternatively state our assumptions in terms of the spectral measure:
$K\in C^{4+\eta ^{\prime }}$ is equivalent to
$\int _{\mathbb{R}^{2}}\lvert t\rvert ^{4+\eta ^{\prime }}d\mu (t)<
\infty $. The second and third parts of Assumption~\ref{a:minimal} are
equivalent to some non-degeneracy of the support of $\mu $ (see Appendix~\ref{a:nondeg}).

We have in mind two important examples of Gaussian fields satisfying Assumption~\ref{a:minimal}:
(1) The Random Plane Wave (RPW), with covariance
$K(x)=J_{0}(\lvert x\rvert )$, where $J_{0}$ is the $0$-th Bessel function,
and spectral measure equal to the normalised Lebesgue measure on the unit
circle; and (2) The Bargmann-Fock (BF) field, with covariance
$K(x)=\exp \left (-\lvert x\rvert ^{2}/2\right )$, and Gaussian spectral
measure. The RPW is a universal model for high energy eigenfunctions of
the Laplacian, see \cite{bogomolny2002percolation} for background. The
BF field can be viewed as a continuous analogue of Bernoulli percolation,
since it has rapid correlation decay and satisfies the FKG inequality,
see \cite{beffara2017percolation} for details and further motivation.

We now formally define the density functionals $c_{ES}$ and $c_{LS}$. Let
$N_{ES}(R,\ell )$ and $N_{LS}(R,\ell )$ denote respectively the number
of components of
$\left \{  x\in \mathbb{R}^{2}:f(x) \geq \ell \right \}  $ and
$\left \{  x\in \mathbb{R}^{2}:f(x)=\ell \right \}  $ contained in
$B(R)$ (i.e.\ the components which intersect $B(R)$ but not
$\mathbb{R}^{2}\backslash B(R)$). Then the following asymptotic laws are
known to hold:

\begin{theorem}[\cite{SodinNazarov2015asymptotic,kurlberg2017variation,Beliaev2018Number}]%
	\label{t:main level}
	Let $f$ be a Gaussian field satisfying Assumption~\ref{a:minimal}. For
	each $\ell \in \mathbb{R}$, there exist
	$c_{ES}(\ell ),c_{LS}(\ell )\geq 0$ such that
	\begin{align*}
	\mathbb{E}\left [N_{ES}(R,\ell )\right ]=c_{ES}(\ell )\cdot \pi R^{2}+O(R)
	\quad \text{and} \quad \mathbb{E}\left [N_{LS}(R,\ell )\right ]=c_{LS}(
	\ell )\cdot \pi R^{2}+O(R)
	\end{align*}
	as $R\to \infty $. The constants implied by the $O(\cdot )$ notation are
	independent of $\ell $. If $f$ is also ergodic, then
	\begin{align*}
	\frac{N_{ES}(R,\ell )}{\pi R^{2}} \rightarrow c_{ES}(\ell ) \quad
	\text{and} \quad \frac{N_{LS}(R,\ell )}{\pi R^{2}}\rightarrow c_{LS}(
	\ell )
	\end{align*}
	as $R\to \infty $, almost surely and in $L^{1}$.
\end{theorem}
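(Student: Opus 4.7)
The plan is to follow the ergodic-theoretic strategy introduced by Nazarov and Sodin for the nodal component count, adapted to excursion and level sets as in \cite{kurlberg2017variation, Beliaev2018Number}. The starting observation is that, for any bounded domain $D \subset \mathbb{R}^2$, the component count $N_{ES}^\circ(D,\ell)$---the number of components of $\{f \geq \ell\}$ contained in $D$---is approximately additive under decompositions of $D$: if $D = \bigsqcup_i Q_i$ is a partition into congruent squares of side $r$, then
\[
\sum_i N_{ES}^\circ(Q_i,\ell) \;\leq\; N_{ES}^\circ(D,\ell) \;\leq\; \sum_i N_{ES}^\circ(Q_i,\ell) + \mathcal{E},
\]
where the defect $\mathcal{E}$ counts components that meet the $1$-skeleton $\bigcup_i \partial Q_i$. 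By stationarity, $\mathbb{E}[N_{ES}^\circ(Q_r,\ell)] = r^2 \phi_r(\ell)$, and the sandwiching together with a Kac--Rice bound on $\mathbb{E}[\mathcal{E}]$ forces $\phi_r(\ell)$ to converge to a limit $c_{ES}(\ell) \geq 0$ as $r \to \infty$. The analogous argument with $N_{LS}$ in place of $N_{ES}$ yields $c_{LS}(\ell)$.

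The sharper asymptotic $\mathbb{E}[N_{ES}(B(R),\ell)] = c_{ES}(\ell)\pi R^2 + O(R)$ with the implicit constant uniform in $\ell$ is the crux. Both the discrepancy between $N_{ES}(B(R),\ell)$ and a sum of tile contributions, and the boundary defect $N_{ES} - N_{ES}^\circ$, are controlled by the number of zeros of $f - \ell$ on smooth curves of total length $O(R)$ (any offending component must intersect the corresponding boundary in at least one point of the level set). The Kac--Rice formula gives
\[
\mathbb{E}\bigl[\#\{x \in \gamma : f(x) = \ell\}\bigr] = \int_\gamma \mathbb{E}\bigl[|\partial_T f(x)| \,\big|\, f(x) = \ell\bigr]\, p_{f(x)}(\ell)\, d\mathcal{H}^1(x),
\]
where $\partial_T$ is the tangential derivative along $\gamma$; since $p_{f(x)}(\ell) = (2\pi)^{-1/2} e^{-\ell^2/2} \leq (2\pi)^{-1/2}$ and the conditional moment is bounded uniformly in $\ell$ and (by stationarity) in $x$, the right-hand side is at most $C\,\mathrm{len}(\gamma)$ with $C$ independent of $\ell$. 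The same Gaussian factor $e^{-\ell^2/2}$ bounds, uniformly in $\ell$, the Kac--Rice density of critical points of $f$ above level $\ell$, which controls the number of components per unit area.

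The almost sure and $L^1$ convergence under ergodicity then follows from the Wiener ergodic theorem applied to the stationary integrable random field $y \mapsto N_{ES}^\circ(y + [0,1]^2, \ell)$: its spatial average over $B(R)$ converges to $\mathbb{E}[N_{ES}^\circ(Q_1,\ell)]$ almost surely and in $L^1$, and by the mean asymptotic this coincides with $c_{ES}(\ell)$; the $O(1/R)$ boundary correction is absorbed. The main obstacle is maintaining uniformity of the $O(R)$ error in $\ell$. For each fixed $\ell$ the estimates are routine, but the uniform version requires that the level-crossing density along curves and the Kac--Rice density of critical points above level $\ell$ both admit $\ell$-independent upper bounds---which is ensured by the boundedness of the Gaussian factor $e^{-\ell^2/2}$ together with stationarity of the conditional moments appearing in the Kac--Rice integrand.
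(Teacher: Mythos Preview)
The paper does not prove this theorem; it is stated as background with citations to \cite{SodinNazarov2015asymptotic,kurlberg2017variation,Beliaev2018Number}, so there is no in-paper proof to compare against. Your sketch follows precisely the approach in those references: the Nazarov--Sodin sandwich/ergodic argument for the almost sure and $L^1$ convergence, together with Kac--Rice boundary estimates (as carried out for arbitrary levels in \cite{Beliaev2018Number}) to control the $O(R)$ error uniformly in $\ell$.

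One small sharpening of your uniformity argument: because $f$ is stationary with constant variance, $\nabla f(0)$ is independent of $f(0)$, so the conditional moment $\mathbb{E}\bigl[|\partial_T f(x)| \mid f(x)=\ell\bigr]$ is in fact constant in $\ell$, not merely bounded. Likewise, the density of critical points at level $\ell$ involves $\mathbb{E}\bigl[|\det\nabla^2 f(0)|\mid f(0)=\ell\bigr]\,p_{f(0)}(\ell)$, where the conditional expectation grows only polynomially in $\ell$ (Gaussian regression) and is dominated by the $e^{-\ell^2/2}$ factor; this makes the uniform $O(R)$ bound immediate.
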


\begin{remark}
	The notation in \cite{kurlberg2017variation} and elsewhere is slightly
	different: $c_{LS}$ in the present paper is denoted $c_{NS}$ in some previous
	papers.
\end{remark}

In \cite{Beliaev2018Number} a representation of $c_{ES}$ and
$c_{LS}$ was given in terms of the densities of certain types of critical
points. To state this we introduce upper/lower connected saddle points.

\begin{definition}%
	\label{d:lower connected aperiodic}
	Let $x_{0}$ be a saddle point of a $C^{2}$ function
	$g:\mathbb{R}^{2}\to \mathbb{R}$ such that there are no other critical
	points at the same level as $x_{0}$ (that is, if $x_{1}$ is another critical
	point of $g$, then $g(x_{1})\neq g(x_{0})$). We say that $x_{0}$ is
	\emph{upper connected} if it is in the closure of only one component of
	$\left \{  x\in \mathbb{R}^{2}:g(x)>g(x_{0})\right \}  $. We say that
	$x_{0}$ is \emph{lower connected} if it is in the closure of only one component
	of $\left \{  x\in \mathbb{R}^{2}:g(x)<g(x_{0})\right \}  $.
\end{definition}

Interestingly, this definition was used as far back as 1870, by Maxwell
\cite{maxwell1870hills}, to understand topographical properties of landscapes.

It was shown in \cite{dennis2007nodal,cheng2015expected} that the expected
number of local maxima, local minima or saddle points of a Gaussian field
with height in a certain range can be expressed as the integral of an explicit
continuous density function over the height range. In
\cite{Beliaev2018Number} this result was extended to upper and lower connected
saddle points without explicitly computing the corresponding density functions:

\begin{proposition}[{\cite[Proposition 1.8]{Beliaev2018Number}}]%
	\label{p:density existence}
	Let $f$ be a Gaussian field satisfying Assumption~\ref{a:minimal}. Then
	there exist non-negative functions
	$p_{m^{+}},p_{m^{-}},p_{s^{+}},p_{s^{-}},p_{s}\in L^{1}(\mathbb{R})$ such
	that the following holds. Let $\Omega \subset \mathbb{R}^{2}$ be compact
	and $\partial \Omega $ have finite Hausdorff-1 measure. Let
	$\ell \in \mathbb{R}$ and let $N_{m^{+}}(\ell )$, $N_{m^{-}}(\ell )$,
	$N_{s^{+}}(\ell )$, $N_{s^{-}}(\ell )$ and $N_{s}(\ell )$ denote the number
	of local maxima, local minima, upper connected saddles, lower connected
	saddles and saddles of $f$ in $\Omega $ with level above $\ell $ respectively.
	Then
	\begin{displaymath}
	\mathbb{E}\left [N_{h}(\ell )\right ]=\text{\emph{Area}}(\Omega )\int _{\ell }^{\infty }p_{h}(x) \, dx
	\end{displaymath}
	for $h=m^{+},m^{-},s^{+},s^{-},s$. Furthermore, these functions can be
	chosen to satisfy the relations $p_{m^{+}}(x) = p_{m^{-}}(-x)$,
	$p_{s^{+}}(x) = p_{s^{-}}(-x)$ and $p_{s^{-}}+p_{s^{+}}=p_{s}$, and such
	that $p_{m^{+}}$, $p_{m^{-}}$ and $p_{s}$ are continuous.%
\end{proposition}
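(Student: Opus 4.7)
The plan is to apply the Kac-Rice formula to each critical point type, exploit stationarity to extract a density in the level, and then use the symmetry $f \mapsto -f$ together with a planar topology argument to obtain the stated relations. First, Assumption~\ref{a:minimal} provides the non-degeneracy needed to apply the Kac-Rice formula to each $h \in \{m^{+}, m^{-}, s^{+}, s^{-}, s\}$, yielding
\[
\mathbb{E}[N_h(\ell)] = \int_\Omega \mathbb{E}\bigl[\lvert\det \nabla^2 f(x)\rvert \, \mathds{1}_{\mathrm{type}(x)=h,\, f(x)>\ell} \,\bigm|\, \nabla f(x) = 0\bigr] \, \phi_{\nabla f(x)}(0) \, dx,
\]
where for $h \in \{m^{+}, m^{-}, s\}$ the type indicator depends only on the sign pattern of $\nabla^2 f(x)$, while for $h \in \{s^{+}, s^{-}\}$ it additionally depends on the global connectedness of the excursion set at the random level $f(x)$. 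Stationarity makes the integrand independent of $x$, and conditioning on $f(0)=y$ and integrating in $y$ over $(\ell, \infty)$ yields $\mathbb{E}[N_h(\ell)] = \mathrm{Area}(\Omega) \int_\ell^\infty p_h(y)\,dy$ with
\[
p_h(y) = \mathbb{E}\bigl[\lvert\det \nabla^2 f(0)\rvert \, \mathds{1}_{\mathrm{type}(0)=h} \,\bigm|\, \nabla f(0)=0,\, f(0)=y\bigr] \, \phi_{\nabla f(0), f(0)}(0,y);
\]
integrability on $\mathbb{R}$ follows from finiteness of the ambient Kac-Rice density together with the existence of all Gaussian moments of $\lvert\det \nabla^2 f(0)\rvert$.

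For $h \in \{m^{+}, m^{-}, s\}$, the conditional law of $\nabla^2 f(0)$ given $(\nabla f(0), f(0))=(0,y)$ is a non-degenerate Gaussian whose mean and covariance depend continuously on $y$, and its sign-definiteness and sign-of-determinant events are continuity sets of this Gaussian law; dominated convergence then yields continuity of $p_{m^{+}}$, $p_{m^{-}}$ and $p_s$. The identities $p_{m^{+}}(y)=p_{m^{-}}(-y)$ and $p_{s^{+}}(y)=p_{s^{-}}(-y)$ are immediate from the invariance of the law of $f$ under $f \mapsto -f$, which swaps maxima with minima, upper- with lower-connected saddles, and replaces $y$ by $-y$.

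The main obstacle is the additive relation $p_{s^{+}} + p_{s^{-}} = p_s$. Locally the level set through a saddle $x_0$ divides a neighbourhood into four alternating sectors, two lying in $\{f>f(x_0)\}$ and two in $\{f<f(x_0)\}$, so the relation requires that at almost every saddle exactly one of the two pairs of same-sign local sectors merges globally. Mutual exclusivity is immediate from the Jordan curve theorem: joining the two upper local sectors by an arc inside $\{f \geq f(x_0)\}$ produces a simple closed curve that separates the two lower local sectors. The remaining possibility is that neither pair merges, meaning all four arcs of the level set emanating from $x_0$ escape to infinity without pairing up (as happens for $f(x,y)=xy$). I would rule this out as a null event under Assumption~\ref{a:minimal} by exploiting that, conditionally on a saddle at the origin, the spectral regularity leaves enough independent randomness in $f$ far from the origin that each of the four level-line arms almost surely eventually pairs up with another; this can be formalised via stationarity and ergodic-type arguments showing that unbounded level-set arcs constitute a null event for Gaussian fields with non-degenerate spectral measure.
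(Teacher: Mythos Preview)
This proposition is quoted from \cite{Beliaev2018Number} and not proved in the present paper, so there is no in-paper argument to compare against. Your Kac--Rice setup, the stationarity reduction to a one-variable density, the continuity of $p_{m^{+}}, p_{m^{-}}, p_{s}$ via the conditional Gaussian law of $\nabla^{2}f(0)$, and the symmetries from $f \mapsto -f$ are all correct and standard.

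The gap is in the last paragraph. You correctly isolate the issue: $p_{s^{+}} + p_{s^{-}} = p_{s}$ is equivalent to the unconditioned field having, almost surely, no infinite four-arm saddles, and your Jordan-curve argument for mutual exclusivity of upper and lower connectedness is fine. But the proposed justification --- that ``unbounded level-set arcs constitute a null event for Gaussian fields with non-degenerate spectral measure'' via ``ergodic-type arguments'' --- is not the right mechanism and is not justified under Assumption~\ref{a:minimal} alone. Nothing in spectral non-degeneracy prevents unbounded level curves at a given (even random) level; indeed, the present paper spends considerable effort (Lemma~\ref{l:conditional one arm decay}, requiring the extra Assumptions~\ref{a:regularity}--\ref{a:Arm decay}) just to rule out infinite four-arm saddles for the \emph{conditioned} field $\tilde{f}_{\ell}$, and explicitly leaves open whether this holds for fields like the RPW. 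The argument that actually works under Assumption~\ref{a:minimal} alone (from \cite{Beliaev2018Number}, and echoed here via the bound $\mathbb{E}(N_{\mathrm{4\mhyphen arm}}(R)) = O(R)$ used in the proof of Corollary~\ref{c:Four arm}) is a deterministic boundary-versus-bulk count: each saddle in $B(R)$ that is four-arm in $B(R)$ sends four level arcs to $\partial B(R)$, and the expected number of such boundary contributions is $O(R)$ by a one-dimensional Kac--Rice estimate on $\partial B(R)$, not $O(R^{2})$. Normalising by area and letting $R \to \infty$ forces the density of infinite four-arm saddles to vanish. No ergodicity or percolation input is needed.
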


We can now state the main result of \cite{Beliaev2018Number}, characterising
$c_{ES}$ and $c_{LS}$ in terms of the densities in Proposition~\ref{p:density existence}:

\begin{theorem}[{\cite[Theorem 1.9]{Beliaev2018Number}}]%
	\label{t:integral equality}
	Let $f$ be a Gaussian field satisfying Assumption~\ref{a:minimal}, and
	let $p_{m^{+}}$, $p_{m^{-}}$, $p_{s^{+}}$, $p_{s^{-}}$ denote the densities
	specified in Proposition~\ref{p:density existence}. Then
	\begin{equation}
	\label{e:integral equality2}
	c_{ES}(\ell )=\int _{\ell }^{\infty }p_{m^{+}}(x)-p_{s^{-}}(x) \, dx
	\end{equation}
	and
	\begin{equation}
	\label{e:integral equality1}
	c_{LS}(\ell )=\int _{\ell }^{\infty }p_{m^{+}}(x)-p_{s^{-}}(x)+p_{s^{+}}(x)-p_{m^{-}}(x)
	\, dx ,
	\end{equation}
	and hence $c_{ES}$ and $c_{LS}$ are absolutely continuous.
\end{theorem}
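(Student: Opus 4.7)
The plan is to establish a pointwise Morse-theoretic identity relating $N_{ES}(R,\ell)$ and $N_{LS}(R,\ell)$ to the critical-point counts $N_{m^\pm}$, $N_{s^\pm}$, and then take expectations using Proposition~\ref{p:density existence} and compare with Theorem~\ref{t:main level}. Under Assumption~\ref{a:minimal}, $f$ is almost surely a Morse function with distinct critical values, so the sweep argument below applies on almost every realisation.

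\medskip

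For the excursion-set formula, I would decrease $t$ from $+\infty$ and track the components of $\{f \geq t\}$: crossing a local maximum adds a component, crossing a local minimum has no effect, and crossing a saddle at level $h$ either leaves the count unchanged (if upper connected, as the two upper prongs then already lie in the same component of $\{f > h\}$) or merges two components. A Jordan-curve argument shows that, for a generic planar Morse saddle, ``not upper connected'' is equivalent to ``lower connected'', which yields
\begin{equation*}
	N_{ES}(R,\ell) = N_{m^+}(R,\ell) - N_{s^-}(R,\ell) + E_{ES}(R,\ell),
\end{equation*}
where $E_{ES}$ is a boundary correction from components of $\{f \geq \ell\}$ intersecting $\partial B(R)$. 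For the level-set formula, the analogous sweep (already used by Maxwell~\cite{maxwell1870hills}) can be read off the change in Betti numbers of $\{f \geq t\}$ at each critical point: local extrema create or destroy small closed level curves ($\pm 1$), while attaching the 1-handle at a saddle either merges two components (lower connected: $b_0$ drops, one level curve lost) or creates a 1-cycle (upper connected: $b_1$ rises, one level curve gained), producing
\begin{equation*}
	N_{LS}(R,\ell) = N_{m^+}(R,\ell) - N_{m^-}(R,\ell) + N_{s^+}(R,\ell) - N_{s^-}(R,\ell) + E_{LS}(R,\ell).
\end{equation*}

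\medskip

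Next, I would argue that $\mathbb{E}[|E_{ES}(R,\ell)|]$ and $\mathbb{E}[|E_{LS}(R,\ell)|]$ are both $O(R)$. Each boundary discrepancy can be charged to a component or level curve that crosses $\partial B(R)$; the expected number of such crossings is of order $R$ by a Kac--Rice computation applied to $f|_{\partial B(R)}$, and each crossing contributes only $O(1)$ to the discrepancy. Taking expectations and applying Proposition~\ref{p:density existence}, which provides the exact identity $\mathbb{E}[N_h(R,\ell)] = \pi R^2 \int_\ell^\infty p_h(x)\,dx$ for every critical-point type $h$, yields
\begin{equation*}
	\mathbb{E}[N_{ES}(R,\ell)] = \pi R^2 \int_\ell^\infty (p_{m^+} - p_{s^-})(x)\,dx + O(R),
\end{equation*}
along with the analogous identity for $\mathbb{E}[N_{LS}(R,\ell)]$. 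Comparing with Theorem~\ref{t:main level}, dividing by $\pi R^2$ and letting $R \to \infty$, gives the claimed formulas \eqref{e:integral equality2} and \eqref{e:integral equality1}; absolute continuity then follows from $p_{m^\pm}, p_{s^\pm} \in L^1(\mathbb{R})$.

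\medskip

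The main obstacle is the rigorous $O(R)$ control of $E_{ES}$ and $E_{LS}$. Upper and lower connectedness of a saddle inside $B(R)$ are genuinely global properties of the field --- the witnessing path may leave and re-enter $B(R)$ many times --- so one cannot simply apply Morse theory to $B(R)$ as a manifold with boundary. The discrepancy must instead be bounded by combining the expected number of critical points in an $O(1)$-neighbourhood of $\partial B(R)$ with the expected number of level-curve crossings of $\partial B(R)$; by stationarity and Kac--Rice both are of order $R$, which closes the argument.
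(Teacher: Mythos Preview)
This theorem is not proved in the present paper: it is quoted verbatim as \cite[Theorem~1.9]{Beliaev2018Number}, and the paper uses it as a black box. So there is no ``paper's own proof'' to compare against here.

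That said, your outline is the standard Morse-sweep argument and is almost certainly the approach taken in \cite{Beliaev2018Number} (same authors). Two remarks. First, your dichotomy ``not upper connected $\Leftrightarrow$ lower connected'' is not literally true for a planar Morse saddle: the third possibility is an infinite four-arm saddle (see Lemma~\ref{l:trichotomy conditional field}). For the unconditioned field this does not affect the densities because $p_{s^+}+p_{s^-}=p_s$ holds as an identity of $L^1$ functions (Proposition~\ref{p:density existence}), but it does mean the pointwise identity you write needs the saddle classification to be the \emph{global} one, exactly as you note. Second, the boundary term you single out as the main obstacle decomposes into two pieces: level curves crossing $\partial B(R)$ (handled by Kac--Rice on the circle, as you say), and saddles in $B(R)$ whose global upper/lower type cannot be read off from $B(R)$ --- i.e.\ saddles that are four-arm in $B(R)$. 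The present paper records that $\mathbb{E}(N_{\mathrm{4\mhyphen arm}}(R))=O(R)$ is exactly \cite[Lemmas~2.4 and~4.5]{Beliaev2018Number} (see the first line of the proof of Corollary~\ref{c:Four arm}), which is the missing ingredient you correctly anticipate.
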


One of the motivations for Theorem~\ref{t:integral equality} was to provide
a tool with which to study the excursion/level set densities: since
$p_{m^{+}}$, $p_{m^{-}}$, and $p_{s}=p_{s^{+}}+p_{s^{-}}$ are explicitly
known for a wide class of fields, by establishing simple properties of
$p_{s^{-}}$ we can deduce results for $c_{ES}$ and $c_{LS}$. We expand
upon this method in this paper. Specifically, we consider the function
\begin{align}
\label{e:Main relation}
p_{s^{-}}^{*}(\ell )&:=p_{s}(\ell ) \, \mathbb{P}\left (\tilde{f}_{\ell }\text{ has a lower connected saddle point at the origin}\right ) ,
\end{align}
where $\tilde{f}_{\ell }$ is the field $f$ conditioned to have a saddle point
at the origin at level $\ell $ (in the sense of Palm distributions; see
Lemma~\ref{l:conditional distribution} for a formal definition). Under
mild conditions we show that $p_{s^{-}}^{*}$ defines a version of
$p_{s^{-}}$ (recall that the latter is defined only up to null sets). By
studying $\tilde{f}_{\ell }$ we are able to deduce properties of
$p^{\ast }_{s^{-}}$, and hence of $c_{ES}$ and~$c_{LS}$.

\subsection{Differentiability}
\label{ss:Differentiability}

Our first set of results concerns the differentiability of $c_{ES}$ and
$c_{LS}$. Let us begin by detailing the necessary assumptions on $f$.

\begin{assumption}%
	\label{a:non-degenerate gradient}
	For all $t\in \mathbb{R}^{2}\backslash \{0\}$,
	\begin{equation*}
	\mathrm{Cov}\left ( \left (f(t), \nabla f(t)\right ) \;\middle |f(0),
	\nabla f(0),\nabla ^{2} f(0)\right )
	\end{equation*}
	is non-degenerate (i.e.\ this $3\times 3$ matrix has non-zero determinant).
\end{assumption}

\begin{assumption}%
	\label{a:regularity}
	There exist $c,\nu >0$ such that, for all $|t| \ge 1$,
	\begin{displaymath}
	\max _{\lvert k\rvert \le 3} \, \left \lvert \partial ^{k} K(t)
	\right \rvert \leq c\lvert t\rvert ^{-(1+\nu )}.
	\end{displaymath}
	Moreover, there exists a neighbourhood $V$ of the origin on which the spectral
	measure $\mu $ has density $\rho $ with respect to the Lebesgue measure
	and $\inf _{V}\rho >0$.
\end{assumption}

\begin{assumption}%
	\label{a:Arm decay}
	For $0<r<R$, let $\mathrm{Arm}_{\ell }(r,R)$ denote the `one-arm event' that
	there exists a component of $\{f\geq \ell \}$ which intersects both
	$\partial B(r)$ and $\partial B(R)$. Then there exist
	$c_{1},c_{2}>0$ such that for any $1 <r<R$
	\begin{equation}
	\label{e:arm decay}
	\mathbb{P}\left (f\in \mathrm{Arm}_{0}(r,R)\right )\leq c_{1}(r/R)^{c_{2}}.
	\end{equation}
\end{assumption}

Assumption~\ref{a:non-degenerate gradient} is extremely mild; it is satisfied
whenever the support of the spectral measure $\mu $ is not too degenerate.
It is sufficient for this support to contain an open set or an ellipse/circle
(Lemma~\ref{a:nondegen2}), so in particular, it holds for the RPW and the
BF field.

Assumptions~\ref{a:regularity} and~\ref{a:Arm decay} are somewhat more
restrictive. Assumption~\ref{a:regularity} holds for any smooth field with
sufficiently nice correlation decay, and in particular holds for the BF
field, but it does not hold for the RPW (whose correlations decay only
as $|t|^{-1/2}$). It also implies Assumption~\ref{a:non-degenerate gradient},
by the previous remark.

Assumption~\ref{a:Arm decay} relates to the conjectured properties of the
`percolation universality class', and has been shown to hold for a wide
class of fields that includes the BF field
\cite{beffara2017percolation,rivera2017critical}. Moreover it is strongly
believed to hold for the RPW. We state our results directly in terms of
one-arm decay as it is likely that these bounds will be extended to more
fields over time.

Our first main result is that $c_{ES}$ and $c_{LS}$ are continuously differentiable
under the above assumptions:

\begin{theorem}%
	\label{t:differentiability of c_{LS}}
	Suppose $f$ is a Gaussian field satisfying Assumptions~\ref{a:minimal}
	and~\ref{a:regularity}--\ref{a:Arm decay} (e.g.\ the Bargmann-Fock field).
	Then $c_{ES}$ and $c_{LS}$ are continuously differentiable on
	$\mathbb{R}$. In other words, the functions $p_{s^{-}}$ and
	$p_{s^{+}}$ defined in Proposition~\ref{p:density existence} can be chosen
	to be continuous, and
	\begin{equation*}
	c'_{ES}(\ell ) = -p_{m^{+}}(\ell )+p_{s^{-}}(\ell )
	\end{equation*}
	and
	\begin{equation*}
	c'_{LS}(\ell )= - p_{m^{+}}(\ell ) + p_{s^{-}}(\ell ) - p_{s^{+}}(
	\ell ) + p_{m^{-}}(\ell ) .
	\end{equation*}
\end{theorem}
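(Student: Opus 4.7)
The plan is to leverage the integral representation in Theorem~\ref{t:integral equality}. Since $p_{m^+}$ and $p_{m^-}$ are already continuous (Proposition~\ref{p:density existence}), the Fundamental Theorem of Calculus reduces the theorem to exhibiting continuous versions of the densities $p_{s^-}$ and $p_{s^+}$; the stated formulas for $c'_{ES}$ and $c'_{LS}$ then follow by differentiating under the integral sign. By the symmetry $p_{s^+}(x) = p_{s^-}(-x)$ it suffices to handle $p_{s^-}$.

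The first step is to verify that the candidate function $p^*_{s^-}$ from \eqref{e:Main relation}, namely $p^*_{s^-}(\ell) = p_s(\ell)\,\mathbb{P}(\tilde{f}_\ell \in \mathrm{LC})$ with $\mathrm{LC}$ denoting the event that the saddle at $0$ is lower connected, is a version of $p_{s^-}$. This is a Kac--Rice/Palm identity: the expected number of lower connected saddles in a compact $\Omega$ with level in $[a,b]$ factors as the integral against the Palm density of \emph{all} saddles (which equals $p_s$) of the conditional probability that the saddle is lower connected given its level; by construction the latter is $\mathbb{P}(\tilde{f}_\ell \in \mathrm{LC})$. Assumption~\ref{a:minimal}(2) and Assumption~\ref{a:non-degenerate gradient} provide the non-degeneracy needed for the Palm distribution to be well defined and to depend continuously on $\ell$. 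Since $p_s$ is continuous, everything reduces to showing that $q(\ell) := \mathbb{P}(\tilde{f}_\ell \in \mathrm{LC})$ is continuous on $\mathbb{R}$.

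The continuity of $q$ is the heart of the argument, and I would attack it via a two-scale truncation. For $R>0$, let $\mathrm{LC}(R)$ be the local version of lower connectedness: the two local components of $\{\tilde{f}_\ell < \ell\} \cap B(R)$ that meet the saddle at $0$ coincide. For fixed $R$, the map $\ell \mapsto \mathbb{P}(\tilde{f}_\ell \in \mathrm{LC}(R))$ is continuous because (i) by Gaussian regression, the Palm law of $\tilde{f}_\ell$ restricted to $B(R)$ varies continuously in $\ell$ in total variation, and (ii) on an event of full measure the indicator of $\mathrm{LC}(R)$ is continuous in the field in the $C^2$-topology, since degenerate configurations (a critical point on $\partial B(R)$ at level exactly $\ell$, or a tangency of the level line to $\partial B(R)$) have probability zero. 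It then remains to prove that $\mathbb{P}(\mathrm{LC} \,\triangle\, \mathrm{LC}(R)) \to 0$ as $R \to \infty$, uniformly for $\ell$ in a compact set. A configuration in $\mathrm{LC} \setminus \mathrm{LC}(R)$ forces the two lower-side arms of the saddle to separate inside $B(R)$ and reconnect outside, which in particular requires four disjoint arms (two upper, two lower) from $0$ to $\partial B(R)$ for the level set at height $\ell$. This four-arm event is contained in a one-arm event for $\{f \le \ell\}$ from an inner annulus to $\partial B(R)$, whose probability is polynomially small by Assumption~\ref{a:Arm decay} (used together with the $f \mapsto -f$ symmetry to cover both sides of zero, and a short domination argument to handle general bounded $\ell$).

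The principal obstacle is transferring the unconditional arm decay of Assumption~\ref{a:Arm decay} to the Palm-conditioned field $\tilde{f}_\ell$. Under the conditioning, one decomposes $\tilde{f}_\ell = m_\ell + g$, where $m_\ell$ is the deterministic regression of $f$ onto the data $(f(0),\nabla f(0), \nabla^2 f(0)) = (\ell, 0, \nabla^2 f(0))$ and $g$ is a centred Gaussian field whose covariance is the Schur complement of $K$. The polynomial decay of $K$ and its derivatives in Assumption~\ref{a:regularity} guarantees that both $m_\ell$ and the discrepancy between the covariance of $g$ and $K$ tend to zero away from the origin, which allows one to sandwich the conditional four-arm event between unconditional arm events at slightly perturbed levels. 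The local spectral density lower bound in Assumption~\ref{a:regularity} then feeds into a standard Cameron--Martin / perturbation argument to absorb the $O(1)$ shift in the level without losing the polynomial decay. Assembling these ingredients yields continuity of $q$, and hence of $p_{s^-}$, completing the proof.
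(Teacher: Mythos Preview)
Your proposal is correct and follows essentially the same route as the paper: reduce to continuity of $p^*_{s^-}$, truncate lower connectedness to a ball $B(R)$, prove continuity in $\ell$ of the truncated probability via the explicit Palm representation and stability of the topological event, and then let $R\to\infty$ using the Cameron--Martin / covariance-decay argument to transfer one-arm decay from $f$ to $\tilde f_\ell$.

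The one organisational difference worth noting is how the $R\to\infty$ step is handled. You propose proving $\mathbb{P}(\mathrm{LC}\setminus\mathrm{LC}(R))\to 0$ \emph{uniformly} on compact $\ell$-sets, which requires tracking the $\ell$-dependence through the perturbation bounds. The paper sidesteps this uniformity issue with a semi-continuity trick: since $\mathrm{LC}(R)\uparrow\mathrm{LC}$, the pointwise limit $p^*_{s^-}$ is automatically lower semi-continuous as a supremum of continuous functions, and the same holds for $p^*_{s^+}$. The arm-decay argument is then used only to show, for each fixed $\ell$, that the infinite four-arm probability vanishes, which (via the trichotomy ``lower connected / upper connected / infinite four-arm'') forces $p^*_{s^-}+p^*_{s^+}=p_s$ pointwise. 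Since $p_s$ is continuous and both summands are lower semi-continuous, each must be continuous. This buys you continuity from a pointwise (rather than uniform) conclusion, which is a minor but clean simplification; your uniform version would also go through, since the constants in Lemma~\ref{l:conditional one arm decay} are locally uniform in $\ell$.
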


We emphasise that Theorem~\ref{t:differentiability of c_{LS}} applies to
a wide class of fields, including the important case of the BF field, but
does not apply to the RPW as stated (although we believe the conclusion
to be true).

\subsubsection{Four-arm saddle points}

Theorem~\ref{t:differentiability of c_{LS}} follows from a more general
result establishing that, under very mild conditions, the continuous differentiability
of $c_{ES}$ and $c_{LS}$ is implied by the decay of certain connection
probabilities involving `four-arm saddles'.

Let $D\subset \mathbb{R}^{2}$ be a simply connected domain with piecewise
$C^{1}$ boundary and let $x_{0}\in D$ be a saddle point of
$g\in C^{2}(\mathbb{R}^{2})$ such that $g$ has no other critical points
at the same level as $x_{0}$. We say that $x_{0}$ is
\textit{four-arm in $D$} if it is in the closure of two components of
$\left \{  x\in D:g(x)>g(x_{0})\right \}  $ and two components of
$\left \{  x\in D:g(x)<g(x_{0})\right \}  $ (see Figure~\ref{Fig_1}(a)); intuitively,
a saddle point is four-arm in $D$ if we cannot tell whether it is upper
or lower connected by looking at the values of $g$ in $D$. A saddle point
$x_{0}$ is said to be \textit{infinite four-arm} if it is in the closure
of two components of
$\left \{  x\in \mathbb{R}^{2}:g(x)>g(x_{0})\right \}  $ and two components
of $\left \{  x\in \mathbb{R}^{2}:g(x)<g(x_{0})\right \}  $ (see Figure~\ref{Fig_1}(b)).
As mentioned in Section~\ref{s:introduction}, four-arm saddle points are
analogous to four-arm events for percolation models.

\begin{figure}[ht!]
	\centering
	\begin{subfigure}[t]{0.45\textwidth}
		\resizebox{\linewidth}{!}{
			\begin{tikzpicture}[scale=0.04]
			\draw plot[smooth, tension=.7] coordinates {(51,44) (45,42) (42,39) (39,35) (38,32) (36,27) (33,24) (29,21) (22,20) (15,18) (9,14) (5,9) (4,4) (2,2) (0,0) (-4,1) (-7,3) (-11,4) (-16,8) (-21,7) (-28,10) (-32,10) (-37,9) (-46,11) (-51,2) (-54,-4) (-57,-11) (-52,-22) (-44,-27) (-44,-37) (-38,-41) (-31,-39) (-25,-31) (-20,-19) (-13,-11) (-6,-4) (0,0) (8,-3) (16,-9) (24,-15) (31,-23) (40,-28) (49,-33)};
			\draw[ dashed] (8,-9) circle (40);
			\draw[fill] (0,0) circle (5pt);
			\node at (-12,-1) {$-$};
			\node at (10,2) {$-$};
			\node at (-2,8) {$+$};
			\node at (1,-6) {$+$};
			\node[right] at (44,33) {$\left\{g=g(x_0)\right\}$};
			\node[right] at (48,-9) {$B(R)$};
			\clip plot[smooth, tension=.7] coordinates {(49,-33)(40,-28) (31,-23)(24,-15) (16,-9) (8,-3) (0,0) (-6,-4) (-13,-11) (-20,-19) (-25,-31) (-31,-39) (-38,-41) (-44,-37) (-44,-27) (-52,-22) (-57,-11) (-54,-4) (-51,2) (-46,11) (-37,9) (-32,10)(-28,10) (-21,7) (-16,8) (-11,4)(-7,3)(-4,1) (0,0) (2,2) (4,4) (5,9) (9,14)(15,18) (22,20) (29,21) (33,24) (36,27) (38,32)(39,35) (42,39) (45,42) (51,44) (-55,43)(-66,-10)(-46,-58)(20,-61)};
			\fill [gray, opacity=0.5] (8,-9) circle (40);
			\end{tikzpicture}}
		\caption{\centering An upper connected saddle point that is four-arm in $B(R)$.}%
		\label{Fig_1a}
	\end{subfigure}
	\begin{subfigure}[t]{0.45\textwidth}
		\resizebox{\linewidth}{!}{
			\begin{tikzpicture}[scale=0.05]
			\draw[white, fill=gray, opacity=0.5] plot[smooth, tension=.6] coordinates {(-27,17) (-20,10) (-11,6) (-3,2) (5,-3) (9,-8) (5,-13) (3,-17) (-2,-19) (-10,-23) (-16,-30) (-21,-29) (-27,-32) (-31,-36.5)(-37.5,-37.5) (-40,-22) (-40,-10) (-39,0) (-34,10) (-30,15)(-27,17)};
			\draw plot[smooth, tension=.6] coordinates {(-27,17) (-20,10) (-11,6) (-3,2) (5,-3) (9,-8) (5,-13) (3,-17) (-2,-19) (-10,-23) (-16,-30) (-21,-29) (-27,-32) (-31,-37)};
			\draw[white, fill=gray,opacity=0.5] plot[smooth, tension=.6] coordinates {(45,20) (41,21) (34,19) (28,13) (26,8) (25,0) (20,-2) (13,-5) (9,-8) (11,-11) (13,-16) (16,-18) (21,-21) (28,-23) (37,-28) (46,-35)(51,-36) (55,-36) (58,-30)(58,-21) (57,-14) (57,-5) (56,2) (55,7) (53,13) (50,17) (45,20)};
			\draw plot[smooth, tension=.6] coordinates {(45,20) (41,21) (34,19) (28,13) (26,8) (25,0) (20,-2) (13,-5) (9,-8) (11,-11) (13,-16) (16,-18) (21,-21) (28,-23) (37,-28) (46,-35)};
			\node at (-12,-9) {$+$};
			\node at (35,-10) {$+$};
			\node at (11,5) {$-$};
			\node at (8,-26) {$-$};
			\draw[->] (-29,19) -- (-34,24);
			\node [above left] at (-34,24) {$\infty$};
			\draw[->] (-33,-39) -- (-38,-44);
			\node [below left] at (-38,-44) {$\infty$};
			\draw[->] (49,-37) -- (54,-42);
			\node [below right] at (54,-42) {$\infty$};
			\draw[->] (47,20) -- (52,25);
			\node [above right] at (52,25) {$\infty$};
			\draw[fill] (9,-8)circle (5pt);
			\node[above, left] at (40,25) {$\left\{g=g(x_0)\right\}$};
			\end{tikzpicture}}
		\caption{\centering An infinite-four-arm saddle point.}%
		\label{Fig_1b}
	\end{subfigure}
	\caption{}
	\label{Fig_1}%
\end{figure}

Recall the conditional field $\tilde{f}_{\ell }$ (to be formally defined
in Lemma~\ref{l:conditional distribution}) and the functions
$p_{s^{-}}^{\ast }$ and $p_{s^{-}}$ defined in~\eqref{e:Main relation} and
Proposition~\ref{p:density existence} respectively.

\begin{samepage}
	\begin{theorem}%
		\label{t:Differentiability equivalence}
		Let $f$ be a Gaussian field satisfying Assumptions~\ref{a:minimal} and~\ref{a:non-degenerate gradient}.
		Then $p_{s^{-}}^{\ast }= p_{s^{-}}$ almost everywhere. Moreover, let
		$a<b$ and suppose that for all $\ell \in (a,b)$
		\begin{equation}
		\label{e:fourarm}
		\mathbb{P} \left (\tilde{f}_{\ell
		}\text{\textup{ has an infinite four-arm saddle at the origin}}\right )=0 .
		\end{equation}
		Then $p_{s^{-}}^{\ast }|_{(a,b)}$ is continuous, and so $c_{ES}$ and
		$c_{LS}$ are continuously differentiable on $(a,b)$.
	\end{theorem}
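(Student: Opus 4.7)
My plan has two parts. First, to show $p_{s^-}^\ast = p_{s^-}$ almost everywhere, I would apply the generalised Kac-Rice / Palm formula for saddle points. Under Assumption~\ref{a:non-degenerate gradient}, the Palm distribution of $f$ given a saddle at $x_0$ at level $\ell$ is precisely $\tilde{f}_\ell$ (translated), so that for any compact $\Omega \subset \mathbb{R}^2$ and $\ell_1 < \ell_2$,
\begin{equation*}
\mathbb{E}\bigl[N_{s^-}(\ell_1, \ell_2; \Omega)\bigr] = \mathrm{Area}(\Omega) \int_{\ell_1}^{\ell_2} p_s(\ell) \, \mathbb{P}\bigl(\tilde{f}_\ell \text{ is lower connected at } 0\bigr) \, d\ell = \mathrm{Area}(\Omega) \int_{\ell_1}^{\ell_2} p_{s^-}^\ast(\ell) \, d\ell.
\end{equation*}
Comparison with Proposition~\ref{p:density existence} identifies $p_{s^-}$ and $p_{s^-}^\ast$ as densities of the same absolutely continuous measure, forcing equality a.e.

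For continuity of $p_{s^-}^\ast = p_s \cdot \phi_{LC}$ on $(a,b)$, where $\phi_{LC}(\ell) := \mathbb{P}(\tilde{f}_\ell \text{ is lower connected at } 0)$ and analogously $\phi_{UC}$, I would first establish a clean partition. Combining the identity $p_{s^+} + p_{s^-} = p_s$ from Proposition~\ref{p:density existence} with the first part of the theorem (applied to both upper and lower connectedness) and inclusion-exclusion on the events $\{u=1\}$ and $\{l=1\}$ yields
\begin{equation*}
\mathbb{P}\bigl(\tilde{f}_\ell \text{ is both upper and lower connected at } 0\bigr) = \mathbb{P}\bigl(\tilde{f}_\ell \text{ has an infinite four-arm saddle at } 0\bigr)
\end{equation*}
for a.e.\ $\ell$. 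The hypothesis of the theorem forces both sides to vanish on $(a,b)$, so upper and lower connectedness partition up to null sets and $\phi_{UC} + \phi_{LC} \equiv 1$ on $(a,b)$. Since $p_s$ is continuous, continuity of $p_{s^-}^\ast$ on $(a,b)$ reduces to continuity of $\phi_{LC}$.

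My strategy is to prove both $\phi_{LC}$ and $\phi_{UC}$ are lower semi-continuous; the constraint $\phi_{LC} + \phi_{UC} \equiv 1$ then forces both to be continuous (a downward jump in one would produce an upward jump in the other, contradicting LSC). For LSC, I would decompose $\tilde{f}_\ell(x) = \ell c_0(x) + M_\ell \cdot c_2(x) + f_2(x)$, with $c_0, c_2$ deterministic, $f_2$ an $\ell$-independent Gaussian residual, and $M_\ell$ the Palm-conditioned Hessian at $0$. A suitable coupling of $\{M_\ell\}$ yields $\tilde{f}_\ell \to \tilde{f}_{\ell_0}$ in $C^2$ on every compact almost surely as $\ell \to \ell_0$. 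On the event that $\tilde{f}_{\ell_0}$ is lower connected, a connecting path $\gamma \subset \{\tilde{f}_{\ell_0} < \ell_0\}$ between the lower arms sits in some ball $B(R_0)$ with $\tilde{f}_{\ell_0}$ uniformly strictly below $\ell_0$ on $\gamma$; the $C^2$-convergence on $B(R_0)$ together with $\ell \to \ell_0$ then gives $\tilde{f}_\ell < \ell$ on $\gamma$ for $\ell$ close to $\ell_0$, so $\tilde{f}_\ell$ is also lower connected. Fatou's lemma gives $\liminf_{\ell \to \ell_0} \phi_{LC}(\ell) \ge \phi_{LC}(\ell_0)$; the sign-reversed argument yields LSC of $\phi_{UC}$. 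Theorem~\ref{t:integral equality} then delivers $C^1$-differentiability of $c_{ES}$ and $c_{LS}$ on $(a,b)$ with the stated derivatives.

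The main obstacle is engineering the coupling of $M_\ell$: its Palm law at level $\ell$ is the Gaussian conditional density of $\nabla^2 f(0)$ given $f(0)=\ell$, $\nabla f(0)=0$, reweighted by $|\det \nabla^2 f(0)|$ and restricted to the saddle cone $\{\det < 0\}$, and both the $\ell$-dependent conditional mean shift and the Jacobian reweighting must be controlled to give the required $C^2$-convergence on compacts. A secondary technical point is that the almost-sure stability of the connecting path under $C^2$-perturbation uses Morse genericity of $\tilde{f}_{\ell_0}$, which follows from Assumption~\ref{a:non-degenerate gradient}.
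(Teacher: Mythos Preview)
Your overall strategy matches the paper's: establish that $\phi_{LC}$ and $\phi_{UC}$ are both lower semi-continuous, then use the four-arm hypothesis to force $\phi_{LC}+\phi_{UC}=1$ and hence continuity of both. The main difference is in how LSC is obtained. The paper introduces the $R$-truncated events $s^-(R)$ (``$R$-lower connected'') and proves (Lemma~\ref{l:continuity event}) that $s^-(R)$ is both open and closed in $C^{2+\eta}_{\text{Reg}}(R)$; this makes $\mathbb{P}(\tilde f_\ell\in s^-(R))$ continuous in $\ell$ by the portmanteau lemma, and LSC of $\phi_{LC}$ follows because $s^-=\bigcup_R s^-(R)$ is an increasing union. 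Your coupling argument is a valid alternative, but the stability step---that a connecting path for $\tilde f_{\ell_0}$ persists for $\tilde f_\ell$ when $\ell$ is close---is exactly the content of Lemma~\ref{l:continuity event}, and requires care near the origin (the arms of the saddle of $\tilde f_\ell$ must still be separated by $\gamma$, not just those of $\tilde f_{\ell_0}$). The paper's $R$-truncation also cleanly handles the first part: rather than applying Kac--Rice directly with the non-local indicator ``lower connected'', it shows $p_{s^-}^{(R)}$ is the density of $R$-lower connected saddles and then passes to the monotone limit.

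There is one genuine gap in your partition step. From $p_{s^-}^*+p_{s^+}^*=p_s$ almost everywhere you only obtain $\phi_{LC}+\phi_{UC}=1$ almost everywhere on $(a,b)$, but the LSC-implies-continuity argument needs this \emph{pointwise}: if $\phi_{LC}(\ell_0)+\phi_{UC}(\ell_0)<1$ at a single $\ell_0$, LSC of both is consistent with a discontinuity there. The paper closes this by invoking the trichotomy (Lemma~\ref{l:trichotomy conditional field}) directly: for every $\ell$, the saddle of $\tilde f_\ell$ is almost surely either upper connected, lower connected, or infinite four-arm, so $\phi_{LC}(\ell)+\phi_{UC}(\ell)+\mathbb{P}(\text{four-arm at }\ell)=1$ for all $\ell$, and the hypothesis gives the pointwise identity on $(a,b)$. (Incidentally, your inclusion--exclusion formulation is slightly off: upper and lower connectedness are mutually exclusive, so $\mathbb{P}(\text{both})=0$ always; what you want is $\mathbb{P}(\text{neither})=\mathbb{P}(\text{four-arm})$, which is precisely Lemma~\ref{l:trichotomy conditional field}.)
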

\end{samepage}

Theorem~\ref{t:differentiability of c_{LS}} follows from Theorem~\ref{t:Differentiability equivalence}
once we verify condition \eqref{e:fourarm} under Assumptions~\ref{a:regularity}
and~\ref{a:Arm decay}. To do so, we use Assumption~\ref{a:regularity} and
a Cameron-Martin argument to treat the conditional field
$\tilde{f}_{\ell }$ away from the origin as a perturbation of the unconditioned
field $f$. We then use Assumption~\ref{a:Arm decay} to bound the relevant
connection probabilities for the unconditioned field.

As a corollary of Theorem~\ref{t:Differentiability equivalence} (actually
of its proof), we deduce a bound on the number of saddle points of a Gaussian
field that are four-arm inside a ball and whose level lies in a narrow
range. This improves a bound that was previously established in
\cite{Beliaev2018Number}, and is also a key ingredient in proving lower
bounds on the variance of the number of excursion/level set components
(see Remark~\ref{r:Variance bound}).

\begin{corollary}%
	\label{c:Four arm}
	Let $f$ be a Gaussian field satisfying all the assumptions of Theorem~\ref{t:Differentiability equivalence}.
	Then there exists a function $\delta _{R}\to 0$ as $R\to \infty $ and a
	constant $c>0$ such that, for each $R>1$ and
	$a\leq a_{R}\leq b_{R}\leq b$,
	\begin{displaymath}
	\mathbb{E}\left (N_{\mathrm{4\mhyphen arm}}\left (R,\left [a_{R},b_{R}\right ]
	\right )\right )\leq c\min \left \{  \delta _{R} R^{2}\left (b_{R}-a_{R}
	\right ),R\right \}
	\end{displaymath}
	where $N_{\mathrm{4\mhyphen arm}}\left (R,\left [a_{R},b_{R}\right ]\right )$ is the
	number of saddle points of $f$ which are four-arm in $B(R)$ and have level
	in $[a_{R},b_{R}]$.
\end{corollary}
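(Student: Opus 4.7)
The plan is to prove each bound in the minimum separately. For the level-dependent bound, I start from a Kac-Rice-type decomposition analogous to Proposition~\ref{p:density existence}:
\begin{equation*}
\mathbb{E}\bigl(N_{\mathrm{4\mhyphen arm}}(R,[a_R,b_R])\bigr) = \int_{a_R}^{b_R} p_s(\ell) \int_{B(R)} \mathbb{P}\bigl(\tilde{f}_{\ell}\text{ has origin four-arm in the ball of radius }R\text{ centred at }-x\bigr)\,dx\,d\ell ,
\end{equation*}
using that, conditionally on a saddle at $x$ at level $\ell$, the translated field has the law of $\tilde{f}_{\ell}$. The crucial monotonicity is that for $D\subset D'$, a saddle that is four-arm in $D'$ is automatically four-arm in $D$, since distinct upper/lower components in $D'$ restrict to distinct components in $D$. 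Because the ball $B(R-|x|)$ is contained in the ball of radius $R$ centred at $-x$, this gives
\begin{equation*}
\mathbb{P}\bigl(\tilde{f}_{\ell}\text{ has origin four-arm in translated ball}\bigr) \leq q(R-|x|,\ell), \qquad q(r,\ell) := \mathbb{P}\bigl(\tilde{f}_{\ell}\text{ has origin four-arm in }B(r)\bigr) .
\end{equation*}

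Next I invoke the hypothesis (valid on $(a,b)$ by Theorem~\ref{t:Differentiability equivalence}) that $\mathbb{P}(\tilde{f}_{\ell}\text{ has an infinite four-arm saddle at origin})=0$ for every $\ell\in(a,b)$. Since the events $\{\text{four-arm in }B(r)\}$ are decreasing in $r$ and their intersection is the infinite four-arm event, monotone convergence gives $q(r,\ell)\downarrow 0$ pointwise. Upgrading this to \emph{uniform} convergence on the compact set $[a,b]$ is the main obstacle, and should follow from continuity of $\ell\mapsto\tilde{f}_{\ell}$ in an appropriate sense (via the explicit Gaussian regression formulas underlying the Palm construction) combined with Dini's theorem. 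Once the uniform convergence is in hand, choose $r(R)\to\infty$ slowly enough that $\delta_R := \sup_{\ell\in[a,b]} q(r(R),\ell) + r(R)/R \to 0$, and split $B(R)$ at radius $R-r(R)$:
\begin{equation*}
\int_{B(R)} q(R-|x|,\ell)\,dx \leq \pi R^2 q(r(R),\ell) + 2\pi R\,r(R) \leq C R^2 \delta_R ,
\end{equation*}
which after substitution and using continuity of $p_s$ on $[a,b]$ yields the first bound $c\,\delta_R R^2 (b_R - a_R)$.

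For the $O(R)$ bound, which is uniform in the level range, I appeal to the Morse-theoretic estimate essentially established in the proof of Theorem~\ref{t:integral equality} in \cite{Beliaev2018Number}: every saddle four-arm in $B(R)$ (at any level) has four arms which must reach $\partial B(R)$ without closing off inside, so a chord-counting / Euler-characteristic argument bounds the total count in expectation by a constant times the expected number of intersections of the associated critical level sets with $\partial B(R)$, which is $O(R)$ by Kac-Rice on the circle. Taking the minimum of the two bounds completes the proof.
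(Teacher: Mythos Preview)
Your proposal is correct and follows essentially the same route as the paper: both arguments cite \cite{Beliaev2018Number} for the $O(R)$ bound, split via an intermediate radius $r=r(R)$ with $r\to\infty$ and $r/R\to 0$, and use Dini's theorem to upgrade the pointwise vanishing of the four-arm probability to uniform convergence on $[a,b]$. The only cosmetic difference is that the paper performs the split deterministically before taking expectations (bounding $N_{\mathrm{4\mhyphen arm}}(R,[a_R,b_R])$ by critical points in the annulus $A(R-r,R)$ plus saddles in $B(R-r)$ that are four-arm in their own $r$-ball), whereas you split inside the Kac--Rice integral; your $p_s(\ell)q(r,\ell)$ is exactly the paper's $p_s(\ell)-p_{s^-}^{(r)}(\ell)-p_{s^+}^{(r)}(\ell)$, and the continuity in $\ell$ you need for Dini is precisely what is established in the proof of Theorem~\ref{t:Differentiability equivalence} via Lemma~\ref{l:continuity event} and the portmanteau lemma.
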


\begin{remark}
	In \cite{Beliaev2018Number} it was shown that
	$\mathbb{E}\left (N_{\mathrm{4\mhyphen arm}}(R)\right )=O(R) $; Corollary~\ref{c:Four arm}
	supersedes this bound whenever $b_{R}-a_{R}=O\left (R^{-1}\right )$. It
	is possible to improve the conclusion of Corollary~\ref{c:Four arm} further
	by imposing stronger assumptions on the field. For example, suppose we
	assume the exponential decay of arm probabilities at non-zero levels: for
	some $\ell ^{\ast }> 0$ and $\delta \in (0,1)$, there exist
	$c_{1}, c_{2} > 0$ such that
	\begin{equation}
	\label{e:Exponential decay assumption}
	\mathbb{P}\left (f\in \mathrm{Arm}_{\ell ^{*}}(\delta R,R)\right )
	\leq c_{1} e^{-c_{2}R} .
	\end{equation}
	Then for any $a>\ell ^{*}$ (or $b<-\ell ^{*}$), it is possible to prove
	that there exists $c>0$ such that
	\begin{displaymath}
	\mathbb{E}\left (N_{\mathrm{4\mhyphen arm}}\left (R,\left [a_{R},b_{R}\right ]
	\right )\right )\leq c\min \left \{  R\log (R)(b_{R}-a_{R}),R\right \}  .
	\end{displaymath}
	In \cite{Muirhead2018sharp}, it is shown that a wide class of fields satisfy \eqref{e:Exponential decay assumption}, so this assumption is reasonable.
	We do not prove this result formally here because Proposition~\ref{c:Four arm}
	is simpler to prove, holds for a wider class of fields, and suffices for
	its intended purpose (see Remark~\ref{r:Variance bound}).
	
	In light of Theorem~\ref{t:Differentiability equivalence}, the fact that \eqref{e:Exponential decay assumption} is expected to hold for a wide class
	of fields also suggests it should be much easier to prove the differentiability
	of $c_{ES}$ away from zero, since the probability of four-arm saddles in
	$B(R)$ should decay exponentially at non-zero levels.
\end{remark}

\subsubsection{The positivity of the level set density}

In order for Theorem~\ref{t:main level} to describe the leading-order asymptotics
of the number of excursion/level set components, it is crucial that the
limiting constants are positive; if they are not, then it can be shown
that $f$ almost surely has no compact excursion/level sets. One nice consequence
of the differentiability of $c_{ES}$ and $c_{LS}$ is that it gives a new,
short proof of their positivity in the delicate case $\ell = 0$:

\begin{proposition}%
	\label{p:positivity of c_{LS}}
	Let $f$ be a Gaussian field satisfying Assumption~\ref{a:minimal}. Suppose
	either $c_{ES}$ or $c_{LS}$ is differentiable at $0$. Then
	$c_{ES}(0) > 0$ and $c_{LS}(0)>0$.
\end{proposition}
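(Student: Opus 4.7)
The plan is to show that, under either hypothesis, $c_{ES}$ is differentiable at $0$ with \emph{strictly positive} derivative; since $c_{ES} \geq 0$, this will force $c_{ES}(0) > 0$, and then the identity $c_{LS}(0) = 2 c_{ES}(0)$ will yield $c_{LS}(0) > 0$ as well.

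The first step is to derive two identities. Starting from the integral formulas of Theorem~\ref{t:integral equality}, the symmetries $p_{m^+}(x) = p_{m^-}(-x)$ and $p_{s^-}(x) = p_{s^+}(-x)$ of Proposition~\ref{p:density existence}, and the standard fact $c_{ES}(-\infty)=0$ (equivalently $\int p_{m^+} = \int p_{s^-}$), a direct change of variables gives
\[
c_{LS}(\ell) = c_{ES}(\ell) + c_{ES}(-\ell) \qquad \text{for all } \ell \in \R.
\]
In particular $c_{LS}$ is even and $c_{LS}(0) = 2c_{ES}(0)$, making the two positivity statements equivalent. For the second identity, I would appeal to the Adler--Taylor formula \cite[Chapter~11]{RFG}: writing $\chi(\{f \geq \ell\} \cap B(R)) = \#\{\text{components}\} - \#\{\text{bounded holes}\}$ up to an $O(R)$ boundary correction and passing to the density, one obtains
\[
c_{ES}(\ell) - c_{ES}(-\ell) = \mathcal{E}(\ell) := \frac{\sqrt{\det \Lambda_2}}{(2\pi)^{3/2}}\,\ell\, e^{-\ell^2/2},
\]
where $\Lambda_2 := \mathrm{Cov}(\nabla f(0))$ is positive definite under Assumption~\ref{a:minimal}. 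Thus $\mathcal{E}$ is $C^\infty$ with $\mathcal{E}(0)=0$ and $\mathcal{E}'(0) = \sqrt{\det \Lambda_2}/(2\pi)^{3/2} > 0$.

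Combining the two identities gives $c_{ES}(\ell) = \tfrac12 (c_{LS}(\ell) + \mathcal{E}(\ell))$. Since $\mathcal{E}$ is smooth, differentiability of $c_{ES}$ at $0$ is equivalent to differentiability of $c_{LS}$ at $0$, so either hypothesis of the proposition places us in a setting where the classical derivative $c'_{ES}(0)$ exists. Evenness of $c_{LS}$ forces $c'_{LS}(0) = 0$, hence
\[
c'_{ES}(0) = \tfrac12\mathcal{E}'(0) = \frac{\sqrt{\det \Lambda_2}}{2(2\pi)^{3/2}} > 0.
\]
Were $c_{ES}(0) = 0$, the non-negativity of $c_{ES}$ would make $0$ a global minimum, forcing $c'_{ES}(0) = 0$ and contradicting the above strict inequality. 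Hence $c_{ES}(0) > 0$, and $c_{LS}(0) = 2 c_{ES}(0) > 0$.

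The main technical obstacle is the identity $c_{ES}(\ell) - c_{ES}(-\ell) = \mathcal{E}(\ell)$ under only Assumption~\ref{a:minimal}; via the integral representation of Theorem~\ref{t:integral equality} this is equivalent to the Kac-Rice identity $(p_{m^+} + p_{m^-} - p_s)(\ell) = -\mathcal{E}'(\ell)$, which reduces to an explicit Gaussian conditional expectation computation of $\mathbb{E}[\det \nabla^2 f(0) \mid f(0) = \ell,\ \nabla f(0) = 0]$ using standard Kac-Rice techniques.
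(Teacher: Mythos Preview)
Your proof is correct and follows essentially the same route as the paper: both arguments rest on the pair of identities $c_{LS}(\ell)=c_{ES}(\ell)+c_{ES}(-\ell)$ and $c_{ES}(\ell)-c_{ES}(-\ell)=\mathcal{E}(\ell)$ (the latter being the Euler-characteristic density, cited in the paper as \cite[Corollary~1.12]{Beliaev2018Number}), then use non-negativity of $c_{ES}$ to force $c_{ES}'(0)=0$ if $c_{ES}(0)=0$, contradicting $\mathcal{E}'(0)\neq 0$. The only cosmetic difference is that you compute $c_{ES}'(0)=\tfrac12\mathcal{E}'(0)>0$ directly via evenness of $c_{LS}$, whereas the paper phrases the contradiction as ``$h'(0)=0$ but $h$ has critical points only at $\pm 1$''; these are the same observation.
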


The positivity of $c_{ES}(0)$ and $c_{LS}(0)$ are already known quite generally
(\cite{SodinNazarov2015asymptotic,ingremeau2016lower} give a variety of
sufficient conditions, whose union can be checked to exhaust fields satisfying
Assumptions~\ref{a:minimal} and \ref{a:non-degenerate gradient}). We restate
this result because it uses a very different method of proof; in particular,
it does not rely on the `barrier method'.

The positivity of $c_{ES}(\ell )$ and $c_{LS}(\ell )$ for $\ell > 0$ is
simpler to establish, even without differentiability (see
\cite{Beliaev2018Number}). On the other hand, our arguments apparently
do not extend to $c_{ES}(\ell )$ for $\ell < 0$ (although this case can
still be treated via the `barrier method'; see Lemma~\ref{p:Positivity RPW}).

\subsubsection{Fields outside the `percolation universality class'}

Although in general we expect the properties of $c_{ES}$ and
$c_{LS}$ to match those of the analogous density functional
$\kappa $ from percolation theory, this can fail for fields outside the
`percolation universality class'.

To demonstrate this, we consider the one non-trivial case in which
$c_{ES}$ and $c_{LS}$ are explicitly known: fields with spectral measure
supported on four or five points (see
\cite[Proposition~1.20]{Beliaev2018Number}). In
\cite{Beliaev2018Number} it was shown that, in the `five point case',
$c_{ES}$ and $c_{LS}$ are smooth everywhere, whereas in the `four point
case', $c_{ES}$ and $c_{LS}$ are smooth everywhere except zero, at which
point they are continuous but not differentiable (see Figure~\ref{Fig_2}).
Hence, in both cases, the smoothness of $c_{ES}$ and $c_{LS}$ differs from
the conjectured properties of $\kappa $ (and in different ways). However,
these fields do not fall within the scope of the present paper (they do
not satisfy Assumptions~\ref{a:minimal} and~\ref{a:non-degenerate gradient}).
Moreover, being periodic, their large-scale properties cannot be expected
to match those of Bernoulli percolation.

\begin{figure}[h!]
	\centering
	\begin{subfigure}[t]{0.45\textwidth}
		\resizebox{\linewidth}{!}{
			\includegraphics[scale=0.5]{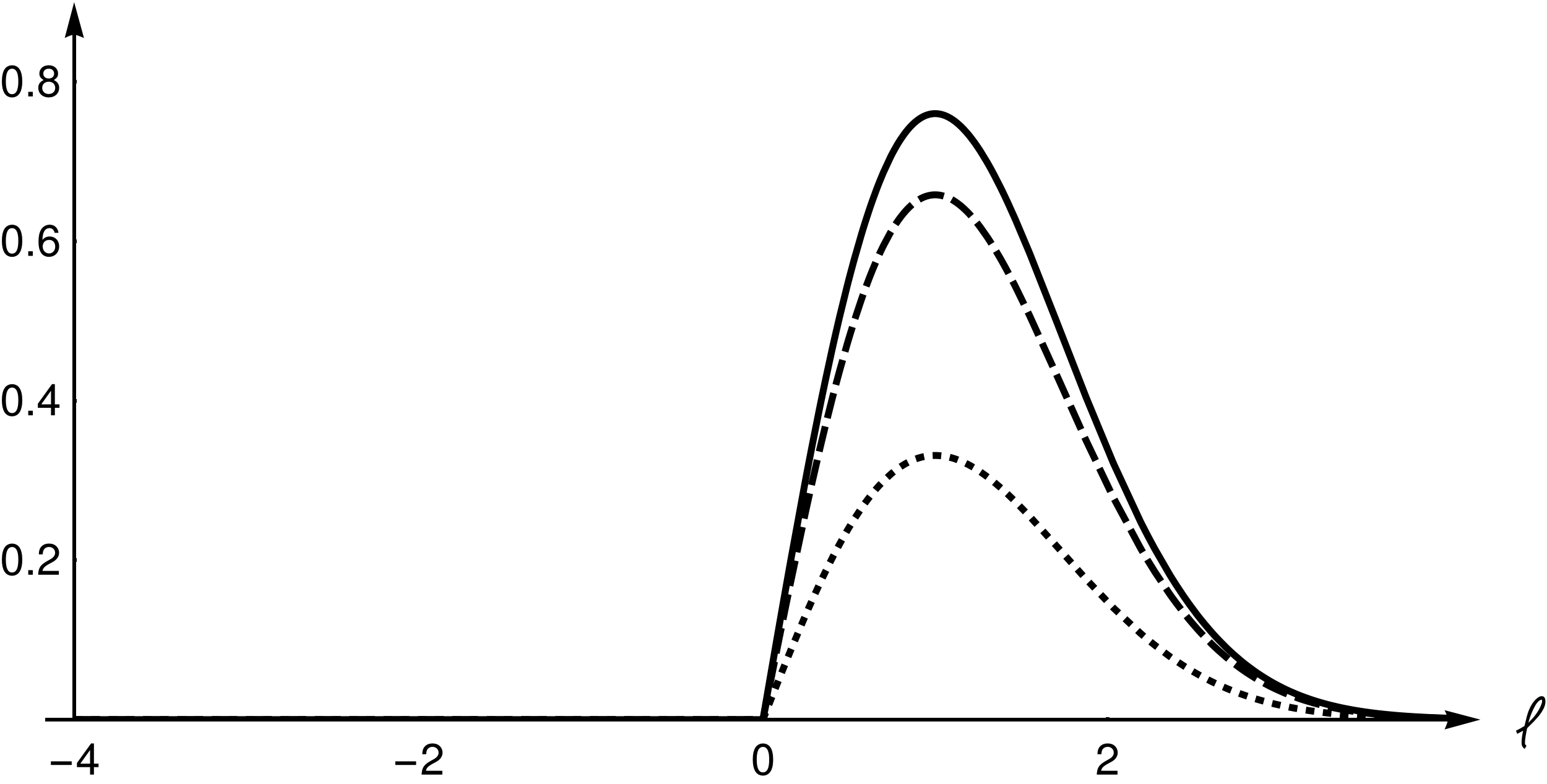}}
		\caption{}%
		\label{Fig_2a}
	\end{subfigure}
	\begin{subfigure}[t]{0.45\textwidth}
		\resizebox{\linewidth}{!}{
			\includegraphics[scale=0.5]{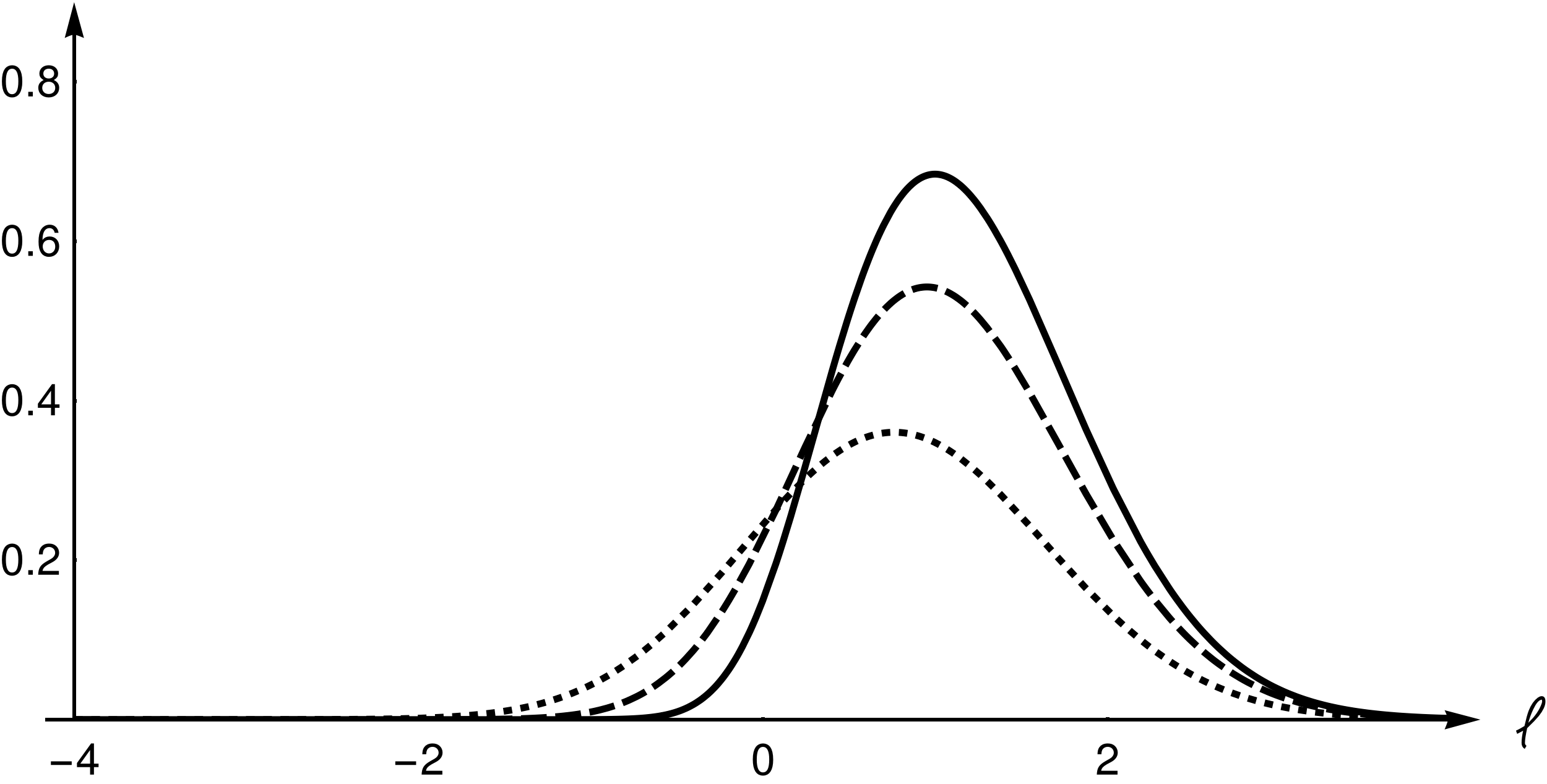}}
		\caption{}%
		\label{Fig_2b}
	\end{subfigure}
	\caption{The functional $c_{ES}(\ell )$ for fields with spectral measure
		supported on four (left) or five (right) points. The different lines correspond
		to different measures.}%
	\label{Fig_2}
\end{figure}

On the other hand, the non-differentiability of $c_{ES}$ at zero in the
`four point case' does reflect a different kind of phase transition: for
$\ell \leq 0$, $\{f\geq \ell \}$ almost surely has no bounded components
($c_{ES}(\ell ) = 0$), whereas for $\ell >0$, the number of components
is of order $R^{2}$ ($c_{ES}(\ell )> 0$); see Figure~\ref{Fig_3}. Moreover,
a Gaussian field in the `five point case' can be represented as a field
in the `four point case' plus an independent Gaussian level shift. Hence
the same phase transition occurs, although it does so at a random level
and so the discontinuity is averaged out.

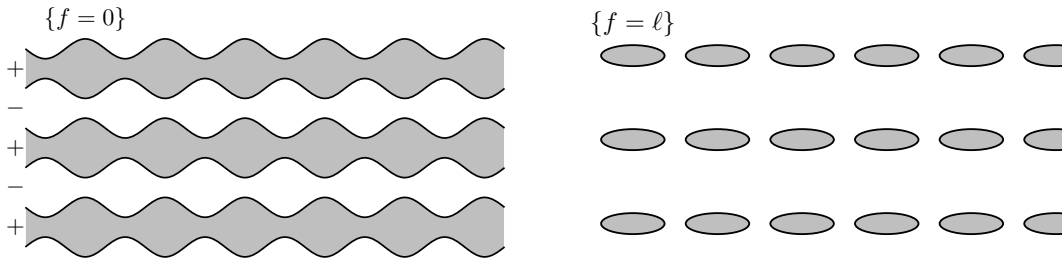
\begin{figure}[h!]
	\centering
	\begin{subfigure}[t]{0.45\textwidth}
		\resizebox{\linewidth}{!}{
			\begin{tikzpicture}[scale=0.16]
			\foreach \y in {0,8,16}{
				\draw[white] [fill=gray,opacity=0.5]
				(0,0+\y) sin (2,1+\y) cos (4,0+\y) sin (6,-1+\y) cos (8,0+\y)
				sin (10,1+\y) cos (12,0+\y) sin (14,-1+\y) cos (16,0+\y) sin (18,1+\y)
				cos (20,0+\y) sin (22,-1+\y) cos (24,0+\y)
				sin (26,1+\y) cos (28,0+\y) sin (30,-1+\y) cos (32,0+\y)
				sin (34,1+\y) cos (36,0+\y) sin (38,-1+\y) cos (40,0+\y)
				sin (42,1+\y) cos (44,0+\y) sin (46,-1+\y) cos (48,0+\y)--(48,4+\y)
				
				sin (46,1+\y+4)cos (44,0+\y+4)sin (42,-1+\y+4)cos (40,0+\y+4)
				sin (38,1+\y+4) cos (36,0+\y+4)sin (34,-1+\y+4)cos (32,0+\y+4)
				sin (30,1+\y+4) cos (28,0+\y+4) sin (26,-1+\y+4) cos (24,0+\y+4)
				sin (22,1+\y+4)cos (20,0+\y+4) sin (18,-1+\y+4)cos (16,0+\y+4)
				sin (14,1+\y+4) cos (12,0+\y+4) sin (10,-1+\y+4) cos (8,0+\y+4)
				sin (6,1+\y+4) cos (4,0+\y+4) sin (2,-1+\y+4)cos (0,\y+4)--(0,\y);
				\draw[ thick]
				(0,\y+4) sin (2,-1+\y+4) cos (4,0+\y+4) sin (6,1+\y+4) cos (8,0+\y+4)
				sin (10,-1+\y+4) cos (12,0+\y+4) sin (14,1+\y+4) cos (16,0+\y+4)
				sin (18,-1+\y+4) cos (20,0+\y+4) sin (22,1+\y+4) cos (24,0+\y+4)
				sin (26,-1+\y+4) cos (28,0+\y+4) sin (30,1+\y+4) cos (32,0+\y+4)
				sin (34,-1+\y+4) cos (36,0+\y+4) sin (38,1+\y+4) cos (40,0+\y+4)
				sin (42,-1+\y+4) cos (44,0+\y+4) sin (46,1+\y+4) cos (48,0+\y+4);
				\draw[ thick]
				(0,0+\y) sin (2,1+\y) cos (4,0+\y) sin (6,-1+\y) cos (8,0+\y)
				sin (10,1+\y) cos (12,0+\y) sin (14,-1+\y) cos (16,0+\y) sin (18,1+\y)
				cos (20,0+\y) sin (22,-1+\y) cos (24,0+\y)
				sin (26,1+\y) cos (28,0+\y) sin (30,-1+\y) cos (32,0+\y)
				sin (34,1+\y) cos (36,0+\y) sin (38,-1+\y) cos (40,0+\y)
				sin (42,1+\y) cos (44,0+\y) sin (46,-1+\y) cos (48,0+\y);
			}
			\node[above] at (6,21) {$\{f=0\}$};
			\node at (-1,18) {$+$};
			\node at (-1,14) {$-$};
			\node at (-1,10) {$+$};
			\node at (-1,6) {$-$};
			\node at (-1,2) {$+$};
			\clip (-3,25) rectangle (49.5,-4);
			\end{tikzpicture}}
	\end{subfigure}
	\begin{subfigure}[t]{0.45\textwidth}
		\resizebox{\linewidth}{!}{
			\begin{tikzpicture}[scale=0.16]
			\clip (-9,23) rectangle (40.5,-6);
			\foreach \y in {0,8,16} {
				\foreach \x in {0,8,...,40}{
					\fill [fill=gray,opacity=0.5] (\x,\y) ellipse (3 and 1);
					\draw[thick] (\x,\y) ellipse (3 and 1);
			}}
			\node[above] at (0,17) {$\{f=\ell\}$};
			\end{tikzpicture}}
	\end{subfigure}
	\caption{Stylised excursion sets for fields with spectral measure supported
		on four points, at the zero level (left) and at a positive level (right).}%
	\label{Fig_3}
\end{figure}

\subsection{Monotonicity}
\label{ss:Monotonicity}

We next consider monotonicity properties of $c_{ES}$ and $c_{LS}$. We begin
by analysing the ratio
\begin{equation*}
p^{\ast }_{s^{-}}(\ell )/p_{s}(\ell ) = \mathbb{P}\left (\tilde{f}_{\ell }\text{ has a lower connected saddle point at the origin}\right ) ,
\end{equation*}
which we intuitively expect to be non-decreasing: if we condition on the
origin being a saddle point at increasing heights, it seems more likely
that it should be lower connected. This can be made rigorous under some
additional assumptions, and allows us to deduce regions on which
$c_{ES}(\ell )$ and $c_{LS}(\ell )$ are monotone.

\begin{assumption}%
	\label{a:Monotonicity}
	The field $f$ is isotropic (i.e.\ its law is invariant under rotations)
	and hence its covariance function can be expressed as
	$K(x)=k(\lvert x\rvert ^{2})$. Then
	\begin{equation}
	\label{e:mon3}
	\chi :=\frac{-k^{\prime }(0)}{\sqrt{k^{\prime \prime }(0)}}\geq 1.
	\end{equation}
	Furthermore, the Gaussian vector $(f(0),\nabla ^{2} f(0))$ is non-degenerate,
	and for all $x\in \mathbb{R}^{2}$,
	\begin{gather}
	\label{e:mon}
	\mathbb{E}\left (f(x) \, \middle | \, f(0)=0,\nabla ^{2} f(0)=
	\begin{pmatrix}
	1 &0
	\\
	0 &0
	\end{pmatrix}
	\right )\geq 0,
	\\
	\label{e:mon2}
	\mathbb{E}\left (f(x) \, \middle | \, f(0)=1,\nabla ^{2} f(0)=
	\begin{pmatrix}
	0 &0
	\\
	0 &0
	\end{pmatrix}
	\right )\leq 1.
	\end{gather}
\end{assumption}
The parameter $\chi $ is used in \cite{cheng2015expected} to parameterise
the density of eigenvalues of $\nabla ^{2} f$ at critical points and is
shown to take values in $(0,\sqrt{2}]$. We note that \eqref{e:mon3} can
be replaced by a weaker condition (see Remark~\ref{r:general monotonicity conditions})
however we do not state the general condition here as \eqref{e:mon3} is
much simpler to verify.

In Section~\ref{ss:mongen} we explain how \eqref{e:mon} and \eqref{e:mon2} can be translated into explicit properties of the conditional
field $\tilde{f}_{\ell }$. We can also give equivalent versions of \eqref{e:mon} and \eqref{e:mon2} that are easier to check in practice.
If we rescale the domain of $f$ so that $k^{\prime }(0)=-1$ (note that this
does not affect the value of $\chi $), then it can be shown by Gaussian
regression that \eqref{e:mon} is equivalent to
\begin{displaymath}
\forall x\in \mathbb{R}^{2},\qquad \left (k\left (\lvert x\rvert ^{2}
\right ) + k^{\prime }\left (\lvert x\rvert ^{2}\right )\right )k^{
	\prime \prime }(0) + \left (x_{1}^{2}\left (3k^{\prime \prime }(0)-1
\right ) + x_{2}^{2}\left (1-k^{\prime \prime }(0)\right )\right ) k^{
	\prime \prime }\left (\lvert x\rvert ^{2}\right ) \geq 0
\end{displaymath}
and \eqref{e:mon2} is equivalent to
\begin{displaymath}
\forall y\geq 0,\qquad
\frac{2k^{\prime \prime }(0)k\left (y\right )+yk^{\prime \prime }\left (y\right )+k^{\prime }\left (y\right )}{2k^{\prime \prime }(0)-1}
\leq 1.
\end{displaymath}
From this, it can be verified that specific fields satisfy Assumption~\ref{a:Monotonicity},
including the BF field. The RPW does not satisfy Assumption~\ref{a:Monotonicity};
in this case $\chi =\sqrt{2}$ but $(f(0),\nabla ^{2} f(0))$ is non-degenerate
so the conditional expectations in \eqref{e:mon} and \eqref{e:mon2} are
not defined. However we are able to prove the monotonicity of
$p^{\ast }_{s^{-}}/p_{s}$ in this case too:

\begin{theorem}%
	\label{t:Monotonicity}
	Let $f$ be the Random Plane Wave or a field satisfying Assumptions~\ref{a:minimal},~\ref{a:non-degenerate gradient}
	and~\ref{a:Monotonicity} (e.g.\ the Bargmann-Fock field). Then
	$p_{s^{-}}^{*}(\ell )/p_{s}(\ell )$ is non-decreasing in $\ell $.
\end{theorem}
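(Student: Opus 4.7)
The quantity $p_{s^-}^*(\ell)/p_s(\ell)$ is the probability that the Palm-conditioned field $\tilde f_\ell$ has a lower connected saddle at the origin, which by Definition~\ref{d:lower connected aperiodic} means that the two local negative sectors at the origin lie in a single component of $\{\tilde f_\ell < \ell\}$, equivalently of $\{h_\ell < 0\}$ for $h_\ell := \tilde f_\ell - \ell$. Lowering $h_\ell$ pointwise only enlarges the sub-level set and can merge but never split components, so this is a monotone decreasing event in $h_\ell$. The strategy is therefore to construct, for $\ell_1 < \ell_2$, a coupling of $(h_{\ell_1}, h_{\ell_2})$ satisfying $h_{\ell_2}(x) \le h_{\ell_1}(x)$ for every $x$ almost surely, from which the desired monotonicity of $\ell \mapsto p_{s^-}^*(\ell)/p_s(\ell)$ follows immediately.

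By Gaussian regression, and using isotropy to decouple $\nabla f(0)$ from $(f(0), \nabla^2 f(0))$, I decompose the Palm-conditioned field as
\[ \tilde f_\ell(x) = \alpha(x)\,\ell + \gamma(x)\cdot M + g(x), \]
where $\alpha, \gamma$ are deterministic ($\ell$-independent) regression coefficients, $g$ is a centred Gaussian residual whose law depends on neither $\ell$ nor $M$, and $M = \nabla^2 \tilde f_\ell(0)$ has the Palm-biased density proportional to $|\det M|\,\mathbf{1}\{\det M < 0\}$ times the Gaussian conditional density of $\nabla^2 f(0)$ given $f(0) = \ell$. Condition~\eqref{e:mon2} reads exactly $\alpha(x) \le 1$, and condition~\eqref{e:mon}, combined with isotropy applied to arbitrary rank-one perturbations of the Hessian obtained from $E_{11}$ by orthogonal conjugation, gives $\gamma(x) \succeq 0$ as a symmetric matrix. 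Hence
\[ h_\ell(x) = (\alpha(x)-1)\,\ell + \gamma(x)\cdot M + g(x), \]
in which the $\ell$-linear term is pointwise non-increasing in $\ell$, and $M \mapsto \gamma(x) \cdot M$ is non-decreasing in the Loewner order; the problem reduces to coupling the Palm laws of $M$ to be Loewner non-increasing in $\ell$ (with $g$ shared across the coupled copies).

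For the Hessian coupling I reparameterise $M = 2k'(0)\ell\,I + N$: the shift $2k'(0)\ell\,I$ is Loewner-decreasing in $\ell$ at constant rate (since $k'(0) < 0$), while $N$ has the $\ell$-dependent Palm density proportional to $|\det(2k'(0)\ell I + N)|\,\mathbf{1}\{\det < 0\}\,\phi(N)$ for a fixed centred Gaussian density $\phi$. The central technical step is to realise a joint coupling $(N_{\ell_1}, N_{\ell_2})$ with correct marginals and $N_{\ell_2} - N_{\ell_1} \preceq 2|k'(0)|(\ell_2 - \ell_1)\,I$ almost surely, so that the drift dominates the fluctuation; Assumption~\ref{a:Monotonicity}'s hypothesis $\chi \ge 1$ enters here, as it bounds the covariance of $N$ relative to $|k'(0)|$ and enables a Strassen-type Loewner-monotone coupling. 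For the RPW, which falls outside Assumption~\ref{a:Monotonicity} due to the degeneracy of $(f(0), \nabla^2 f(0))$, the Helmholtz identity $\Delta f \equiv -f$ forces $\mathrm{Tr}(\nabla^2 f(0)) = -f(0)$ and collapses the Hessian onto a trace-free perturbation of $-(\ell/2)I$; the Palm bias then depends only on an $\ell$-independent, rotationally invariant trace-free fluctuation, and an explicit monotone coupling can be written down by direct computation, with the RPW analogues of \eqref{e:mon} and \eqref{e:mon2} checked using Bessel-function identities. I expect the main obstacle to be the nonlinearity of the Palm bias $|\det M|$, which couples the $\ell$-dependent shift to the fluctuation in a way that a naive same-$N$ coupling does not respect; my plan to overcome it is to reduce the Loewner comparison to scalar stochastic orderings for the two eigenvalues of $M_\ell$ and verify these directly from the explicit Palm density, using $\chi \ge 1$ in the general case and the Helmholtz degeneracy in the RPW case.
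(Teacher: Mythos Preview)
Your high-level strategy and your reading of Assumption~\ref{a:Monotonicity} are correct: the lower-connected event is decreasing in $h_\ell=\tilde f_\ell-\ell$, and \eqref{e:mon2} gives $\alpha\le 1$ while \eqref{e:mon} plus isotropy gives non-negativity of the eigenvalue coefficients $b_1,b_2$ (this is exactly Lemma~\ref{l:genmonassump} in the paper). The gap is in the central step.

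A pathwise coupling $h_{\ell_2}\le h_{\ell_1}$ requires, after conditioning on $(g,\theta)$, a componentwise-monotone coupling of the eigenvalue pair $(\lambda_1,\lambda_2)$ across levels. By Strassen's theorem this exists iff the \emph{joint} multivariate stochastic order holds, i.e.\ monotonicity of $\mathbb P\bigl((\lambda_1,\lambda_2)\in L\bigr)$ for \emph{every} lower set $L\subset\mathbb R^2$. Your proposed reduction to ``scalar stochastic orderings for the two eigenvalues'' does not deliver this: marginal orderings of $\lambda_1$ and $\lambda_2$ separately do not imply the joint order in dimension two. Under the hypothesis $\chi\ge 1$ the parameter $\tau$ in \eqref{e:density parameters} is non-negative, so the Gaussian part of the eigenvalue density has \emph{negatively} correlated coordinates; FKG/Holley-type routes to a joint coupling are therefore unavailable, and it is not clear the joint order even holds. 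The paper neither proves nor needs it.

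What the paper actually does (Lemma~\ref{l:General field finite stochastic decreasing}) is weaker but sufficient. Conditional on $(g,\theta)$, the event $\{h_\ell(t_i)\le u_i\ \forall i\}$ becomes $(\lambda_1,\lambda_2)\in A_\ell$ for a region that is an intersection of half-planes with non-negative normals (from $b_1,b_2\ge 0$), expanding in $\ell$ (from $\alpha\le 1$). Monotonicity is proved \emph{only} for such polyhedral lower sets, via a geometric reduction to rectangles followed by Gromov's monotone-ratio theorem and an explicit sign check; this is precisely where $\chi\ge 1$ (equivalently $\tau\in[0,1]$) is used. A separate discretisation argument (site percolation on a polar grid plus bounded convergence) then upgrades finite-dimensional monotonicity to the continuous lower-connected event, replacing the pathwise coupling you sought.

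For the RPW your description is also inaccurate: the Palm bias $|\det M|=\rho^2-\ell^2/4$ (with $\rho$ the norm of the trace-free part) depends on $\ell$, so the biased law of the fluctuation is \emph{not} $\ell$-independent; see \eqref{e:Lambda density}. The paper instead exploits that after conditioning on $(g,\theta)$ only a single scalar $\lambda$ remains: pointwise monotonicity is verified by hand (Lemma~\ref{l:RPW pointwise stochastic decreasing}), and since all coordinates $h_\ell(t_i)$ are then monotone transforms of the same scalar, the copula is $\ell$-independent and Theorem~\ref{t:Stochastic domination marginal} lifts the marginal order to the joint one (Lemma~\ref{l:RPW finite stochastic decreasing}).
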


Given the definition of $p_{s^{-}}^{*}$, we will show that Theorem~\ref{t:Monotonicity}
is a consequence of $\tilde{f}_{\ell }-\ell $ being stochastically decreasing
in $\ell $. Our proof of the latter fact differs for the RPW and for fields
satisfying Assumption~\ref{a:Monotonicity} (in the former case it is somewhat
simpler, because of the degeneracies in the RPW; see e.g.
\cite{wigman2012nodal}).

The monotonicity of $p_{s^{-}}^{*}/p_{s}$ has some implications for the
smoothness of $c_{ES}$ and $c_{LS}$:
\begin{corollary}%
	\label{c:Twice differentiable}
	Let $f$ satisfy the conditions of Theorem~\ref{t:Monotonicity}, then
	$p_{s^{-}}^{*}$ has at most a countable set of discontinuities, all of
	which are jump discontinuities. In particular, $c_{ES}$ and $c_{LS}$ are
	twice differentiable almost everywhere.
\end{corollary}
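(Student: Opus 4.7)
My plan is as follows. By Theorem~\ref{t:Differentiability equivalence}, I identify $p_{s^{-}}$ with the version $p_{s^{-}}^{*}$ up to a null set, and by Theorem~\ref{t:Monotonicity} the ratio $q(\ell) := p_{s^{-}}^{*}(\ell)/p_s(\ell)$ is non-decreasing on $\mathbb{R}$. Since $q$ takes values in $[0,1]$ and is monotone, a standard fact from real analysis gives that $q$ has at most countably many discontinuities, all of which are jumps. To transfer this to $p_{s^{-}}^{*}$, I write $p_{s^{-}}^{*} = p_s \cdot q$ and use the continuity of $p_s$ from Proposition~\ref{p:density existence}: at any $\ell_0$ with $p_s(\ell_0) > 0$, continuity (resp.\ a jump) of $q$ at $\ell_0$ translates directly into continuity (resp.\ a jump) of $p_{s^{-}}^{*}$ at $\ell_0$; and at any $\ell_0$ with $p_s(\ell_0) = 0$, the sandwich $0 \leq p_{s^{-}}^{*} \leq p_s$ together with continuity of $p_s$ forces $p_{s^{-}}^{*}$ to be continuous at $\ell_0$ regardless of the behaviour of $q$. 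Hence $p_{s^{-}}^{*}$ inherits at most a countable set of discontinuities, all of them jumps.

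For the almost-everywhere twice-differentiability of $c_{ES}$ and $c_{LS}$, I would combine two ingredients. First, by Lebesgue's theorem on monotone functions, $q$ is differentiable almost everywhere. Second, $p_s$ is smooth in $\ell$: it is given by an explicit Kac--Rice-type integral, namely the conditional expectation of $|\det \nabla^{2} f(0)|$ restricted to saddle-type Hessians, multiplied by the Gaussian density of $f(0)$ at $\ell$, which under Assumption~\ref{a:minimal} depends smoothly on $\ell$. The product rule then yields that $p_{s^{-}}^{*} = p_s q$ is differentiable almost everywhere, and the same holds for $p_{s^{+}} = p_s - p_{s^{-}}^{*}$; meanwhile $p_{m^{+}}, p_{m^{-}}$ are smooth by the same Kac--Rice argument.

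To conclude, I invoke Theorem~\ref{t:integral equality}. At any $\ell_0$ where $p_{s^{-}}^{*}$ is continuous (all but countably many $\ell_0$), the integrand $p_{m^{+}} - p_{s^{-}}^{*}$ in the representation of $c_{ES}$ is continuous on a neighbourhood of $\ell_0$, so absolute continuity of $c_{ES}$ upgrades to classical differentiability there with $c'_{ES} = -p_{m^{+}} + p_{s^{-}}^{*}$ on that neighbourhood. Differentiating once more at any of the almost-every points where $p_{s^{-}}^{*}$ is Lebesgue-differentiable yields $c''_{ES}$, and the same argument applies to $c_{LS}$. The main obstacle I anticipate is conceptual rather than technical: one must carefully pass from the almost-everywhere identity $p_{s^{-}} = p_{s^{-}}^{*}$ and the absolute continuity of $c_{ES}$ to genuine pointwise first and second derivatives, which is handled by a local application of the fundamental theorem of calculus at continuity points of $p_{s^{-}}^{*}$, and then by applying the product rule to $p_s \cdot q$ on the full-measure set where both factors are differentiable.
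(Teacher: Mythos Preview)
Your proposal is correct and follows essentially the same route as the paper: monotonicity of $p_{s^{-}}^{*}/p_{s}$ gives at most countably many jump discontinuities, continuity (indeed smoothness, via \cite{cheng2015expected}) of $p_{s}$ transfers this to $p_{s^{-}}^{*}$, and Lebesgue's theorem on monotone functions combined with the product rule yields almost-everywhere differentiability of $p_{s^{-}}^{*}$, hence almost-everywhere second differentiability of $c_{ES}$ and $c_{LS}$. One small slip: continuity of $p_{s^{-}}^{*}$ at $\ell_{0}$ does not give continuity on a neighbourhood (the countable discontinuity set could be dense), but the fundamental theorem of calculus only needs continuity at $\ell_{0}$ to yield $c_{ES}'(\ell_{0}) = -p_{m^{+}}(\ell_{0}) + p_{s^{-}}^{*}(\ell_{0})$, so your argument goes through with this minor correction.
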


Another consequence of monotonicity is a converse of Theorem~\ref{t:Differentiability equivalence}:

\begin{corollary}%
	\label{c:Differentiability equivalence}
	Let $f$ satisfy the conditions of Theorem~\ref{t:Monotonicity}, then for
	every $a<b$ the following are equivalent:
	\begin{enumerate}
		\item For all $\ell \in (a,b)$
		\begin{displaymath}
		\mathbb{P} \left (\tilde{f}_{\ell
		}\text{\textup{ has an infinite four-arm saddle at the origin}}\right )=0;
		\end{displaymath}
		\item There exists a version of $p_{s^{-}}$ which is continuous on
		$(a,b)$;
		\item $c_{ES}(\cdot )$ is continuously differentiable on $(a,b)$;
		\item $c_{LS}(\cdot )$ is continuously differentiable on $(a,b)$.
	\end{enumerate}
\end{corollary}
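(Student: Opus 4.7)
I would prove the corollary by closing the cycle
\[
(1) \Rightarrow (2) \Leftrightarrow (3) \Leftrightarrow (4) \Rightarrow (2) \Rightarrow (1).
\]
Only the final implication is new; it is where Theorem~\ref{t:Monotonicity} is used. The remaining implications follow from Theorems~\ref{t:Differentiability equivalence} and~\ref{t:integral equality} combined with the fundamental theorem of calculus.

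\textbf{The easy implications.} Firstly, $(1)\Rightarrow(2)$ is immediate from Theorem~\ref{t:Differentiability equivalence}, which produces $p^*_{s^-}$ as a continuous version of $p_{s^-}$ on $(a,b)$. For the equivalence $(2)\Leftrightarrow(3)\Leftrightarrow(4)$, I would use the integral representations \eqref{e:integral equality2} and \eqref{e:integral equality1}. Since $p_{m^+}$, $p_{m^-}$ and $p_s=p_{s^+}+p_{s^-}$ are continuous on $\mathbb{R}$ (Proposition~\ref{p:density existence}), the existence of a continuous version of $p_{s^-}$ on $(a,b)$ is equivalent to the existence of a continuous version of $p_{s^+}$ on $(a,b)$, and hence to the continuity of the integrands appearing in \eqref{e:integral equality2} and \eqref{e:integral equality1}. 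By the fundamental theorem of calculus this is in turn equivalent to continuous differentiability of $c_{ES}$, respectively $c_{LS}$, on $(a,b)$.

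\textbf{The new direction $(2)\Rightarrow(1)$.} The first step is to upgrade $(2)$ to the stronger statement that the specific choice $p^*_{s^-}$ defined by \eqref{e:Main relation} is itself continuous on $(a,b)$. Let $\bar p$ be a continuous version of $p_{s^-}$ on $(a,b)$, so $\bar p = p^*_{s^-}$ on a dense set $S\subset(a,b)$. By Theorem~\ref{t:Monotonicity}, the ratio $F(\ell) := p^*_{s^-}(\ell)/p_s(\ell)$ is non-decreasing on $\mathbb{R}$, and since $p_s$ is continuous and strictly positive, $F$ admits one-sided limits at every $\ell_0\in(a,b)$. Approaching $\ell_0$ from either side along $S$ forces both of these one-sided limits to coincide with $\bar p(\ell_0)/p_s(\ell_0)$, ruling out any jump discontinuity of $F$ (cf.~Corollary~\ref{c:Twice differentiable}); hence $F$, and therefore $p^*_{s^-}$, is continuous on $(a,b)$.

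The second step is to deduce \eqref{e:fourarm} from continuity of $p^*_{s^-}$ on $(a,b)$, by revisiting the proof of Theorem~\ref{t:Differentiability equivalence}. That proof establishes an upper bound of the form
\[
\limsup_{h\downarrow 0}\bigl|\,p^*_{s^-}(\ell+h)-p^*_{s^-}(\ell-h)\,\bigr| \;\leq\; p_s(\ell)\,\mathbb{P}\bigl(\tilde f_\ell \text{ has an infinite four-arm saddle at the origin}\bigr),
\]
obtained by truncating the four-arm event to balls $B(R(h))$ with $R(h)\to\infty$ as $h\downarrow 0$. Under the stochastic monotonicity coupling of Theorem~\ref{t:Monotonicity}, where $\tilde f_\ell-\ell$ is pathwise non-increasing in $\ell$, the signed increment $p^*_{s^-}(\ell+h)-p^*_{s^-}(\ell-h)$ is non-negative up to an $o(1)$ correction (because $F$ is non-decreasing and $p_s$ is continuous), and a parallel truncation argument should yield a matching lower bound by the same infinite four-arm probability. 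Continuity of $p^*_{s^-}$ at $\ell$ then forces both sides to vanish, giving \eqref{e:fourarm}.

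\textbf{Main obstacle.} The principal technical step is the matching lower bound in the final argument: one must show that realisations exhibiting an infinite four-arm saddle at level $\ell_0$ contribute quantitatively to the increment of $F$ across a shrinking neighbourhood of $\ell_0$ in the monotone coupling. This requires a careful pathwise analysis, using the coupling of Theorem~\ref{t:Monotonicity}, of how the two negative sectors of a four-arm saddle at $\ell_0$ connect up as $\ell$ increases, and is the step where monotonicity is used beyond merely giving that $F$ is non-decreasing.
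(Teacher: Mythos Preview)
Your easy implications and your Step~1 (upgrading $(2)$ to continuity of the specific function $p^*_{s^-}$ on $(a,b)$ via the monotonicity of $p^*_{s^-}/p_s$) are correct and match the paper exactly. The divergence is in Step~2, where you propose a quantitative lower bound for the increment of $p^*_{s^-}$ in terms of the infinite four-arm probability, to be obtained through a pathwise monotone coupling. This is both unnecessary and not well-grounded: Theorem~\ref{t:Monotonicity} and its proof give stochastic domination of finite-dimensional marginals (conditional on $g,\theta$), not a simultaneous pathwise coupling of $\tilde f_\ell-\ell$ across all $\ell$, so the ``pathwise non-increasing'' picture you invoke is not actually available. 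Even if it were, extracting the matching lower bound you describe would require substantial new work.

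The paper bypasses this entirely with a symmetric, two-line argument. Run your Step~1 a second time with $p^*_{s^+}$: since $p^*_{s^+}(\ell)/p_s(\ell)=p^*_{s^-}(-\ell)/p_s(-\ell)$ (or, as noted at the end of the proof of Theorem~\ref{t:Monotonicity}, by a parallel argument), this ratio is non-increasing, and the same monotone-versus-continuous reasoning forces $p^*_{s^+}$ to be continuous on $(a,b)$. Now the almost-everywhere identity $p^*_{s^-}+p^*_{s^+}=p_s$ becomes a pointwise identity on $(a,b)$ because all three functions are continuous there. Feeding this into the trichotomy equation~\eqref{e:Differentiability equivalent 1} from the proof of Theorem~\ref{t:Differentiability equivalence},
\[
1-\mathbb{P}\bigl(\tilde f_\ell\text{ has an infinite four-arm saddle}\bigr)=\frac{p^*_{s^+}(\ell)}{p_s(\ell)}+\frac{p^*_{s^-}(\ell)}{p_s(\ell)}=1,
\]
immediately gives $(1)$. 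No lower-bound matching, no pathwise analysis, no obstacle.
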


\begin{remark}
	Clearly, if any of (1)--(4) hold in Corollary~\ref{c:Differentiability equivalence},
	then by Theorem~\ref{t:Differentiability equivalence}, the version of
	$p_{s^{-}}|_{(a,b)}$ which is continuous is equal to
	$p^{\ast }_{s^{-}}|_{(a,b)}$.
\end{remark}

Finally we use Theorem~\ref{t:Monotonicity} to deduce intervals on which
$c_{ES}$ and $c_{LS}$ are monotone. We shall state the strongest form of
our results only in the case of the RPW and BF field. Let $D_{+}$ and
$D^{+}$ respectively denote the lower and upper, right Dini derivatives,
that is, for $g:\mathbb{R}\to \mathbb{R}$,
\begin{displaymath}
D_{+}g(x)=\liminf _{\epsilon \to 0^{+}}
\frac{g(x+\epsilon )-g(x)}{\epsilon }\quad \text{and}\quad D^{+}g(x)=
\limsup _{\epsilon \to 0^{+}}\frac{g(x+\epsilon )-g(x)}{\epsilon }.
\end{displaymath}

\begin{proposition}%
	\label{p:RPW monotonicity}
	Let $f$ be the Random Plane Wave. Then
	\begin{align*}
	D_{+}c_{ES}(\ell )>0\quad &\text{for }\ell \in (-\infty ,0.87]
	\\
	D^{+}c_{ES}(\ell )<0\quad &\text{for }\ell \in [1,\infty )
	\end{align*}
	and
	\begin{align*}
	D^{+}c_{LS}(\ell )<0\quad \text{for }\ell \in [1,\infty ).
	\end{align*}
\end{proposition}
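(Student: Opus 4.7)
The plan is to differentiate the integral representation from Theorem~\ref{t:integral equality}, choosing the version $p_{s^-} = p_{s^-}^*$ (valid by Theorem~\ref{t:Differentiability equivalence}), and combine Theorem~\ref{t:Monotonicity} (which gives that $q(\ell):=p_{s^-}^*(\ell)/p_s(\ell)$ is non-decreasing) with the classical Kac--Rice formulas for the critical point densities of an isotropic Gaussian field with $\chi = \sqrt 2$, which apply to the RPW.

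The first observation is that, because $q$ is monotone and $p_s$ is continuous (Proposition~\ref{p:density existence}), $p_{s^-}^*$ has one-sided limits at every point. Together with the absolute continuity of $c_{ES}$ and $c_{LS}$ this promotes the right Dini derivatives into actual right derivatives,
\[
D^+ c_{ES}(\ell) = D_+ c_{ES}(\ell) = p_{s^-}^*(\ell^+) - p_{m^+}(\ell),
\]
and analogously $D^+ c_{LS}(\ell) = D^+ c_{ES}(\ell) + p_{m^-}(\ell) - p_{s^+}^*(\ell^+)$. The symmetry $f \sim -f$ of the RPW forces $q(0) = 1/2$, so by monotonicity $q(\ell^+) \ge 1/2$ for every $\ell \ge 0$.

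For the positive intervals the constants $0.87$ and $1$ in the proposition are the (numerical) solutions of $p_s(\ell) = 2 p_{m^+}(\ell)$ and $p_s(\ell) = p_{m^+}(\ell)$ respectively, both located using the explicit Kac--Rice expressions. On $\ell \in [0, 0.87]$ the inequality $p_s(\ell) > 2 p_{m^+}(\ell)$ combined with $q(\ell^+) \ge 1/2$ yields $p_{s^-}^*(\ell^+) \ge p_s(\ell)/2 > p_{m^+}(\ell)$, hence $D^+ c_{ES}(\ell) > 0$. On $\ell \in [1,\infty)$ the inequality $p_s(\ell) \le p_{m^+}(\ell)$ combined with the trivial bound $p_{s^-}^*(\ell^+) \le p_s(\ell)$ gives $D^+ c_{ES}(\ell) \le 0$; strict inequality at the boundary $\ell = 1$ follows from $q(1^+) < 1$, which holds because $p_{s^+}^*$ has positive integral over any unbounded interval. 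The claim about $c_{LS}$ on $[1,\infty)$ reduces by the same recipe to verifying the additional inequality $p_{m^-}(\ell) < p_{s^+}^*(\ell^+)$, which follows from Kac--Rice together with the monotonicity bound $p_{s^+}^*(\ell^+) \le p_s(\ell)/2$ and the fact that $p_{m^-}(\ell) \ll p_s(\ell)/2$ for $\ell \ge 1$.

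The main obstacle is extending $D^+ c_{ES}(\ell) > 0$ to $\ell \in (-\infty, 0)$: here monotonicity of $q$ only provides the upper bound $q(\ell^+) \le 1/2$, which is in the wrong direction for proving $p_{s^-}^*(\ell^+) > p_{m^+}(\ell)$. My plan for this range is to obtain a pointwise lower bound on $q(\ell^+)$ that goes beyond Theorem~\ref{t:Monotonicity}, either by directly analysing the Palm-conditioned field $\tilde f_\ell$ for the RPW (constructing explicit geometric events whose Palm probability can be estimated and which imply the saddle at the origin is lower connected), or by passing through the planar duality identity $c_{LS}(\ell) = c_{ES}(\ell) + c_{ES}(-\ell)$ valid for symmetric fields in order to transfer the problem to an inequality on the positive axis, where the bounds on $q$ work in the right direction. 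The soft point is that the ratio $p_{m^+}(\ell)/p_s(\ell)$ vanishes rapidly as $\ell \to -\infty$, so the lower bound on $q(\ell^+)$ that we need is correspondingly mild; the technical heart of the proof is making this quantitative.
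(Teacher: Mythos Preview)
Your outline for $\ell\in[0,0.87]$ and $\ell\in[1,\infty)$ is essentially the paper's argument, but there is a genuine gap in your plan for $\ell<0$, and it stems from overlooking a very specific feature of the RPW.

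For the Random Plane Wave one has $\Delta f=-f$, so at any local maximum the trace of the Hessian is negative and hence $f>0$ there; equivalently, the explicit Kac--Rice density gives
\[
p_{m^+}(\ell)=\frac{1}{4\sqrt{2}\,\pi^{3/2}}\bigl((\ell^2-1)e^{-\ell^2/2}+e^{-3\ell^2/2}\bigr)\mathds{1}_{\ell\ge 0}.
\]
In particular $p_{m^+}(\ell)=0$ for all $\ell\le 0$, not merely small. This collapses the ``main obstacle'' entirely: for $\ell\le 0$ the Dini derivative bound becomes $D_+c_{ES}(\ell)\ge p_{s^-}^*(\ell)$, and you only need $p_{s^-}^*>0$ everywhere. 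The paper obtains this from Proposition~\ref{p:Positivity RPW}: since $p_{m^+}$ vanishes on $(-\infty,0]$, Theorem~\ref{t:integral equality} gives $c_{ES}(\ell)=\int_{-\infty}^{\ell}p_{s^-}(x)\,dx$ for $\ell<0$, and positivity of $c_{ES}$ forces $p_{s^-}>0$ at arbitrarily negative points; monotonicity of $q$ then propagates this to all $\ell$. No direct analysis of $\tilde f_\ell$ or duality transfer is needed. The same positivity also yields $q<1$ everywhere (via $p_{s^+}^*(\ell)=p_{s^-}^*(-\ell)>0$), which is exactly what you need for the strict inequality at $\ell=1$.

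Two smaller points. First, the assertion $q(0)=1/2$ ``by symmetry'' is not justified for the RPW: symmetry gives $p_{s^-}^*(0)=p_{s^+}^*(0)$, but equality $p_{s^-}^*+p_{s^+}^*=p_s$ at the single point $0$ would require ruling out infinite four-arm saddles at level $0$, which has not been established for this field. The paper avoids this by arguing that if $q<1/2$ on a right-neighbourhood of $0$, then by monotonicity and symmetry $p_{s^-}+p_{s^+}<p_s$ on a set of positive measure, contradicting the almost-everywhere identity from Proposition~\ref{p:density existence}. Second, your treatment of $c_{LS}$ on $[1,\infty)$ is more complicated than necessary: since $p_{m^-}(\ell)=p_{m^+}(-\ell)=0$ for $\ell>0$, the extra contribution to $D^+c_{LS}(\ell)$ is simply $-p_{s^+}^*(\ell^+)\le 0$, so $D^+c_{LS}(\ell)\le D^+c_{ES}(\ell)<0$ immediately.
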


\begin{proposition}%
	\label{p:BF monotonicity}
	Let $f$ be the Bargmann-Fock field. Then there exists $\epsilon >0$ such
	that
	\begin{displaymath}
	c_{ES}^{\prime }(\ell )
	\begin{cases}
	>0 &\text{for }\ell \in (-\epsilon ,0.64]
	\\
	<0 &\text{for }\ell \in [1.03,\infty )
	\end{cases}
	\end{displaymath}
	and
	\begin{displaymath}
	c^{\prime }_{LS}(\ell )<0\quad \text{for }\ell \in [1.03,\infty ).
	\end{displaymath}
\end{proposition}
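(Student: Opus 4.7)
The plan is to combine the differentiability formulas of Theorem~\ref{t:differentiability of c_{LS}} (which applies since the BF field satisfies Assumptions~\ref{a:minimal} and~\ref{a:regularity}--\ref{a:Arm decay}) with the monotonicity supplied by Theorem~\ref{t:Monotonicity} (which applies under Assumption~\ref{a:Monotonicity}, verified for the BF field in Section~\ref{ss:Monotonicity}). Under these hypotheses we have the identities
\begin{equation*}
c'_{ES}(\ell) = p_{s^-}(\ell) - p_{m^+}(\ell), \qquad c'_{LS}(\ell) = p_{s^-}(\ell) - p_{s^+}(\ell) + p_{m^-}(\ell) - p_{m^+}(\ell),
\end{equation*}
with both right-hand sides continuous in $\ell$. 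Setting $r(\ell):=p_{s^-}(\ell)/p_s(\ell)$, Theorem~\ref{t:Monotonicity} gives that $r$ is non-decreasing. Since the BF field is symmetric in law ($f \stackrel{d}{=} -f$), we have $p_{s^-}(0)=p_{s^+}(0)=p_s(0)/2$, so $r(0)=1/2$ and therefore $r(\ell)\geq 1/2$ for all $\ell\geq 0$, i.e.\ $p_{s^-}(\ell)\geq p_s(\ell)/2$.

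Next I would exploit the explicit Kac-Rice formulas for the BF field: the densities $p_{m^+},p_{m^-},p_s$ reduce to tractable Gaussian integrals using the rotational invariance and the explicit form $K(x)=e^{-|x|^2/2}$ (see \cite{cheng2015expected}). For $\ell\in[0,0.64]$ I would verify, by sharp analytic estimates or controlled numerical evaluation, that $p_s(\ell)/2 > p_{m^+}(\ell)$; together with the lower bound $p_{s^-}(\ell)\geq p_s(\ell)/2$ this yields $c'_{ES}(\ell)>0$ on $[0,0.64]$. Since $c'_{ES}$ is continuous and $c'_{ES}(0)>0$, there exists $\epsilon>0$ such that $c'_{ES}(\ell)>0$ on $(-\epsilon,0)$ as well, giving the claim on $(-\epsilon,0.64]$.

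For the decreasing interval $[1.03,\infty)$ I would use only the trivial bounds $p_{s^-}(\ell)\leq p_s(\ell)$ and $p_{s^-}(\ell)-p_{s^+}(\ell)\leq p_s(\ell)$, which give
\begin{equation*}
c'_{ES}(\ell)\leq p_s(\ell)-p_{m^+}(\ell),\qquad c'_{LS}(\ell)\leq p_s(\ell)+p_{m^-}(\ell)-p_{m^+}(\ell).
\end{equation*}
Both right-hand sides should be strictly negative on $[1.03,\infty)$: at large levels the densities of saddles and local minima decay much faster than the density of local maxima, so $p_{m^+}$ dominates. This would be checked from the explicit Kac-Rice expressions for BF.

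The main obstacle is the explicit verification of the three inequalities $p_s/2>p_{m^+}$ on $[0,0.64]$ and $p_s<p_{m^+}$, $p_s+p_{m^-}<p_{m^+}$ on $[1.03,\infty)$. The thresholds $0.64$ and $1.03$ are presumably (close to) the points where these bounds become tight, so the intervals cannot be enlarged without refining the underlying estimates, e.g.\ by extracting a better upper bound on $r(\ell)$ from the monotonicity argument rather than using $r\leq 1$. Although the relevant Gaussian integrals are tractable for BF, rigorously controlling them uniformly over an unbounded interval typically requires splitting the range into a compact piece (handled by numerics) and a tail (handled by asymptotics of Gaussian densities of critical values); combining the two into a fully rigorous proof is the delicate part of the argument, with the rest being a direct application of Theorems~\ref{t:differentiability of c_{LS}} and~\ref{t:Monotonicity}.
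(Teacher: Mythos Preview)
Your proposal is correct and essentially identical to the paper's proof: the paper uses the differentiability formulas from Theorem~\ref{t:differentiability of c_{LS}}, the monotonicity of $p_{s^-}/p_s$ from Theorem~\ref{t:Monotonicity} together with $p_{s^-}(0)=p_s(0)/2$ to get $p_{s^-}(\ell)\ge p_s(\ell)/2$ for $\ell\ge 0$, and then verifies the inequalities $p_s/2>p_{m^+}$ on $[0,0.64]$ and $p_s-p_{m^+}<0$, $p_s+p_{m^-}-p_{m^+}<0$ on $[1.03,\infty)$ by substituting the explicit isotropic densities from \cite{cheng2015expected} (with $\chi=1$ for Bargmann--Fock). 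The paper simply asserts these numerical inequalities after writing out the closed-form densities, without the compact/tail splitting you anticipate.
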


We also present weaker results for general isotropic fields. Recall that
the covariance function of an isotropic $f$ may be expressed as
$K(x)=k(\lvert x\rvert ^{2})$ for some
$k:[0,\infty )\to \mathbb{R}$. We also recall the parameter
$\chi =-k^{\prime }(0)/\sqrt{k^{\prime \prime }(0)}$ which takes values in
$(0,\sqrt{2}]$ (see \cite{cheng2015expected} for details on this parameter).

\begin{proposition}%
	\label{p:Isotropic monotonicity}
	Let $f$ be an isotropic field satisfying Assumptions~\ref{a:minimal},~\ref{a:regularity}--\ref{a:Arm decay}
	and~\ref{a:Monotonicity}. Then there exists $\epsilon >0$ and an explicit
	constant $C>0$ such that
	\begin{displaymath}
	c_{ES}^{\prime }(\ell )
	\begin{cases}
	>0 &\text{for }\ell \in (-\epsilon ,C)
	\\
	<0 &\text{for }\ell \in \left (\sqrt{2}/\chi ,\infty \right )
	\end{cases}
	\end{displaymath}
	and
	\begin{displaymath}
	c^{\prime }_{LS}(\ell )<0\;\text{ for }\ell \in \left (\sqrt{2}/\chi ,
	\infty \right ).
	\end{displaymath}
\end{proposition}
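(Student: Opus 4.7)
My plan is to combine three ingredients: the derivative formulas from Theorem~\ref{t:differentiability of c_{LS}}, the monotonicity of $p_{s^-}/p_s$ from Theorem~\ref{t:Monotonicity}, and the explicit Kac--Rice (Cheng--Schwartzman) formulas for the critical-point densities of isotropic 2D Gaussian fields, together with the Gaussian Kinematic Formula of Adler--Taylor. Since the hypotheses include Assumptions~\ref{a:regularity}--\ref{a:Arm decay}, Theorem~\ref{t:differentiability of c_{LS}} already gives $c_{ES}, c_{LS} \in C^1(\mathbb{R})$ with
\[ c_{ES}'(\ell) = -p_{m^+}(\ell) + p_{s^-}(\ell), \qquad c_{LS}'(\ell) = -p_{m^+}(\ell) + p_{s^-}(\ell) - p_{s^+}(\ell) + p_{m^-}(\ell), \]
all densities continuous. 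Theorem~\ref{t:Monotonicity} combined with the $f \leftrightarrow -f$ symmetry (which forces $p_{s^-}(0) = p_s(0)/2$) produces the pinching $p_s(\ell)/2 \leq p_{s^-}(\ell) \leq p_s(\ell)$ for all $\ell \geq 0$. Independently, the Gaussian Kinematic Formula identifies
\[ (p_{m^+} - p_s + p_{m^-})(\ell) = \frac{\lambda_2}{(2\pi)^{3/2}}(\ell^2 - 1) e^{-\ell^2/2}, \qquad \lambda_2 = -2 k'(0), \]
while Cheng--Schwartzman give $p_{m^\pm}(\ell)$ and $p_s(\ell)$ individually as explicit integrals in $\ell$ and $\chi$.

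For the sign of $c_{ES}'$ I would treat the two ranges separately. Near the origin I apply the lower pinch to obtain $c_{ES}'(\ell) \geq (p_s(\ell) - 2 p_{m^+}(\ell))/2$ and rewrite, via the GKF identity,
\[ p_s(\ell) - 2 p_{m^+}(\ell) = -(p_{m^+}(\ell) - p_{m^-}(\ell)) + \frac{\lambda_2}{(2\pi)^{3/2}}(1 - \ell^2) e^{-\ell^2/2}. \]
At $\ell = 0$ the first bracket vanishes by symmetry and the second equals $\lambda_2/(2\pi)^{3/2} > 0$; since both terms are smooth in $\ell$, the right-hand side stays positive on some explicit interval $[0,C]$, and continuity of $c_{ES}'$ extends $c_{ES}' > 0$ to $(-\epsilon, 0]$ for some $\epsilon > 0$. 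For $\ell > \sqrt{2}/\chi$ I instead apply the upper pinch $p_{s^-} \leq p_s$, giving $c_{ES}'(\ell) \leq p_s(\ell) - p_{m^+}(\ell)$; by the GKF this is negative exactly when $p_{m^-}(\ell) < \frac{\lambda_2}{(2\pi)^{3/2}}(\ell^2 - 1) e^{-\ell^2/2}$, which I would verify from the Cheng--Schwartzman integral representation of $p_{m^-}$ holds on $(\sqrt{2}/\chi, \infty)$.

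For $c_{LS}'$, substituting $p_{s^+} = p_s - p_{s^-}$ into the derivative formula produces the algebraic identity
\[ c_{LS}'(\ell) = 2 c_{ES}'(\ell) + (p_{m^+} - p_s + p_{m^-})(\ell), \]
and applying the upper pinch once more yields $c_{LS}'(\ell) \leq 2 p_{m^-}(\ell) - \frac{\lambda_2}{(2\pi)^{3/2}}(\ell^2 - 1) e^{-\ell^2/2}$, so it suffices to verify $2 p_{m^-}(\ell) < \frac{\lambda_2}{(2\pi)^{3/2}}(\ell^2 - 1) e^{-\ell^2/2}$ on $(\sqrt{2}/\chi, \infty)$. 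The main obstacle, shared with the large-$\ell$ $c_{ES}'$ claim, is this explicit Gaussian-integral comparison: one must estimate the Cheng--Schwartzman integral for $p_{m^-}(\ell)$ (a Gaussian integral of $|\det \nabla^2 f|$ over the positive-definite-Hessian region conditional on $f(0) = \ell$, $\nabla f(0) = 0$) against the quadratic-in-$\ell$ GKF expression and show that $\sqrt{2}/\chi$ is the correct threshold. The value $\sqrt{2}/\chi$ appears because, conditional on $f(0) = \ell$, the Hessian trace has conditional mean $4 k'(0) \ell$ (hence strongly negative for $\ell > 0$) with conditional variance governed by $k''(0)$, so $\ell \sim \sqrt{2}/\chi$ is precisely the scale at which this mean suppresses the probability of a positive-definite Hessian (a local minimum) to the order required by both comparisons.
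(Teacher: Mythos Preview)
Your approach matches the paper's: use Theorem~\ref{t:differentiability of c_{LS}} for the derivative formulas, the trivial bound $p_{s^-}\le p_s$ for large $\ell$, and Theorem~\ref{t:Monotonicity} plus the symmetry $p_{s^-}(0)=p_s(0)/2$ to get $p_{s^-}\ge p_s/2$ on $[0,\infty)$, then reduce everything to explicit inequalities among the Cheng--Schwartzman densities. The paper forgoes your GKF rewrite and instead cites \cite[Corollary~1.19]{Beliaev2018Number} directly for the fact that $p_s-p_{m^+}$ and $p_{m^-}+p_s-p_{m^+}$ are negative on $(\sqrt{2}/\chi,\infty)$ --- precisely the explicit comparison you flag as the remaining step.
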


The explicit formula for the constant $C$ is quite complicated and is given
in the proof of this proposition. However it is straightforward to apply
this formula to any particular field (as we have done for the RPW and Bargmann-Fock
field in Propositions~\ref{p:RPW monotonicity} and~\ref{p:BF monotonicity}).

As an intermediate result to Proposition~\ref{p:RPW monotonicity} we require
that, for the RPW, $c_{ES}(\ell )>0$ for $\ell \le 0$. Since this result
is not stated elsewhere in the literature, we do so here. The proof uses
the `barrier method' and is near-identical to that in
\cite{SodinNazarov2015asymptotic} in the case $\ell =0$.

\begin{proposition}%
	\label{p:Positivity RPW}
	Let $f$ be the Random Plane Wave. Then $c_{ES}(\ell )>0$ for all
	$\ell \in \mathbb{R}$.
\end{proposition}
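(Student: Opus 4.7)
The plan is to adapt the barrier method of \cite{SodinNazarov2015asymptotic} to arbitrary levels $\ell$. For $\ell > 0$, positivity of $c_{ES}(\ell)$ is already established in \cite{Beliaev2018Number}, so the new content is the case $\ell \le 0$, on which I focus. The strategy has three ingredients: (i) a deterministic Helmholtz-solution barrier that produces a compact component of $\{g \ge \ell\}$; (ii) an application of the support theorem for the RPW to show that $f$ approximates this barrier with positive probability; and (iii) a stationarity and linearity-of-expectation argument upgrading the local event to linear growth of $\mathbb{E}[N_{ES}(R, \ell)]$.

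For the barrier, I would take $g_M(x) := M \cdot J_0(|x|)$ with $M > 0$ to be chosen large, noting that $g_M$ is a bounded solution of $\Delta g + g = 0$. Let $j_{0,1} < j_{0,2}$ denote the first two positive zeros of $J_0$ and set $m_0 := -\min_{r \in [j_{0,1}, j_{0,2}]} J_0(r) > 0$. For any $M > |\ell|/m_0$, the equation $M J_0(r) = \ell$ has two solutions $r_1 < r_2$ in $(j_{0,1}, j_{0,2})$, and there exists $\delta > 0$ such that $g_M \le \ell - \delta$ uniformly on an open annular strip containing $\partial B(R_0)$, where $R_0 := (r_1 + r_2)/2$. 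Since $g_M(0) = M > \ell$, the component of $\{g_M \ge \ell\}$ containing the origin is compact and strictly contained in $B(R_0)$.

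I would next invoke the topological support theorem for the RPW to show that the event $\mathcal{E} := \{\|f - g_M\|_{C^0(B(R_0))} < \delta\}$ has positive probability. The key input is that the $C^0_{\mathrm{loc}}$ support of the RPW coincides with the space of bounded solutions to $\Delta g + g = 0$, and $g_M$ lies in this space because $J_0(|x|)$ is an integral of unit-frequency plane waves over the spectral circle. On $\mathcal{E}$ one has $f(0) > \ell$ and $f < \ell$ on $\partial B(R_0)$, so $\{f \ge \ell\} \cap B(R_0)$ contains a compact connected component.

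Finally, I would fix a lattice of points $\{x_i\} \subset \mathbb{R}^2$ with mutual separation $> 2R_0$, so that the balls $B(x_i, R_0)$ are disjoint and each event $T_{x_i}\mathcal{E}$ contributes a distinct compact component of $\{f \ge \ell\}$ lying entirely inside $B(R)$ whenever $x_i \in B(R - R_0)$. Hence $N_{ES}(R, \ell) \ge \sum_{x_i \in B(R - R_0)} \id_{T_{x_i}\mathcal{E}}$, and taking expectations using stationarity yields $\mathbb{E}[N_{ES}(R, \ell)] \gtrsim R^2 \, \mathbb{P}(\mathcal{E})$ as $R \to \infty$. Combined with Theorem~\ref{t:main level}, this forces $c_{ES}(\ell) > 0$. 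The main obstacle is the support-theorem step: because the RPW's spectral measure is one-dimensional (uniform on the unit circle), its topological support in $C^0_{\mathrm{loc}}$ is a strict subset of $C^0_{\mathrm{loc}}$, and one must verify carefully that $g_M$ lies in this support in a topology strong enough to preserve the compactness of the inner excursion component — a technical point handled in the nodal-set analogue \cite{SodinNazarov2015asymptotic}, whose argument can be followed essentially verbatim.
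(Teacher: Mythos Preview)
Your proposal is correct and follows the same barrier-method architecture as the paper: exhibit a deterministic radial Helmholtz solution (a multiple of $J_0(|x|)$) with a compact super-$\ell$ island, argue that $f$ is close to it with positive probability, and then tile by stationarity to get linear-in-area growth of $\mathbb{E}[N_{ES}(R,\ell)]$.

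The one genuine difference is in how positive probability of the approximation event is established. You invoke the topological support theorem for the RPW, using that $J_0(|x|)$ lies in the $C^0_{\mathrm{loc}}$-closure of the Cameron--Martin space (equivalently, the space of bounded Helmholtz solutions). The paper instead works by hand with the Fourier--Bessel expansion $f(r,\theta)=\sum_{m} a_m J_{|m|}(r)e^{im\theta}$ and writes down an explicit cylinder event on the independent Gaussian coefficients $a_m$ (namely $a_0$ large and all other $|a_m|$ small, with an exponentially decaying envelope for large $|m|$) that forces $f(0)>\ell$ and $f<\ell$ on a fixed circle near the first minimum of $J_0$. Your route is cleaner and more conceptual, and generalises immediately to any barrier in the Helmholtz class; the paper's route is more elementary, self-contained, and avoids any appeal to the support theorem (which, as you note, requires some care for the RPW because the spectral measure is singular). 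Either works; the paper chose the hands-on version presumably because it is short and requires no external machinery.
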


\begin{remark}%
	\label{r:Variance bound}
	Many of the results in this work are built upon by
	\cite{Beliaev2019fluctuations} in order to prove lower bounds on the variance
	of the number of level/excursion set components in $B(R)$ as
	$R\to \infty $. Specifically, it is shown that if $f$ has sufficiently
	nice correlation decay (such as the BF field), and if $c_{ES}$ has a non-zero
	derivative at $\ell $, then
	\begin{displaymath}
	\mathrm{Var}(N_{ES}(R,\ell ))\geq cR^{2}
	\end{displaymath}
	for some $c>0$ and all $R$ sufficiently large. Moreover, if $f$ is the
	RPW and one of the Dini derivatives of $c_{ES}$ is non-zero for
	$\ell \neq 0$, then
	\begin{displaymath}
	\mathrm{Var}(N_{ES}(R,\ell ))\geq cR^{3}
	\end{displaymath}
	for some $c>0$ and all $R$ sufficiently large. Analogous results hold in
	both cases for level sets and $c_{LS}$. A key step in proving these results
	is to estimate the order of
	\begin{displaymath}
	\mathbb{E}(N_{ES}(R,\ell )-N_{ES}(R,\ell +\epsilon _{R})),
	\end{displaymath}
	which is made possible by Theorem~\ref{t:differentiability of c_{LS}} and
	Corollary~\ref{c:Four arm}. Since the lower bounds also require that
	$c_{ES}$ has a non-zero derivative/Dini derivative at $\ell $, Propositions~\ref{p:RPW monotonicity}--\ref{p:Isotropic monotonicity}
	are crucial for ensuring that they are widely applicable.
\end{remark}

\subsection{Outline of the remainder of the paper}

In Section~\ref{s:palm} we give a formal definition of
$\tilde{f}_{\ell }$, the field $f$ conditioned to have a saddle point at
the origin at level $\ell $, and derive explicit representations for
$\tilde{f}_{\ell }$ in special cases. In Section~\ref{s:Continuity} we study
topological properties of $\tilde{f}_{\ell }$, and use this to deduce the
results outlined in Section~\ref{ss:Differentiability}. In Section~\ref{s:Monotonicity}
we consider stochastic monotonicity properties of $\tilde{f}_{\ell }$, and
complete the proofs of the results in Section~\ref{ss:Monotonicity}. Appendix~\ref{a:nondeg}
contains miscellaneous results on the non-degeneracy of Gaussian fields.

\section{The field conditioned to have a saddle at the origin}
\label{s:palm}

In this section we consider $\tilde{f}_{\ell }$, the field $f$ conditioned
to have a saddle point at the origin at level $\ell $. Using the theory
of Palm distributions we give an explicit representation for
$\tilde{f}_{\ell }$, and in the isotropic case we derive simple expressions
for its distribution.

We begin with a general statement expressing $\tilde{f}_{\ell }$ as (a limit
of) a Palm distribution relative to a point process defined by the saddle
points of $f$. Let us first recall the relevant theory of Palm distributions
(see \cite[Chapter 11]{kallenberg2006foundations} for background). We define
a point process $\zeta $ to be a random measure on $\mathbb{R}^{d}$ such
that $\zeta (B)$ is integer-valued for every bounded Borel set $B$. We
say that $\zeta $ is simple if, with probability one,
$\zeta (\{s\})\leq 1$ for every $s\in \mathbb{R}^{d}$. We say that it
is non-degenerate if $\mathbb{E}(\zeta (B))>0$ for every Borel set
$B$ with positive Lebesgue measure. Let
$g:\mathbb{R}^{2} \to \mathbb{R}$ be a planar random field and
$\mathcal{S}$ a non-degenerate, simple point process on
$\mathbb{R}^{2}$, and suppose that $(g, \mathcal{S})$ are jointly stationary
(i.e.\ this joint distribution is invariant under translations). Fix a
bounded Borel set $B \subset \mathbb{R}^{2}$ such that
$0<\mathbb{E}\left (\#\{s\in B:s \in \mathcal{S} \} \right ) <\infty $.
Then the \textit{Palm distribution of $g$ relative to $\mathcal{S}$} is
defined as the random field $\tilde{g}$ satisfying, for any Borel cylinder
set $A$,
\begin{equation}
\label{e:palm}
\mathbb{P}\left (\tilde{g}(x) \in A \right ) =
\frac{\mathbb{E}\left (\#\{s\in B:s \in \mathcal{S}, g(x - s) \in A \} \right )}{\mathbb{E}\left (\#\{s\in B:s \in \mathcal{S} \} \right ) }
.
\end{equation}
This definition is independent of the reference set $B$, and so we may write ${\tilde{g} = (g \,|\, \{0\} \in \mathcal{S})}$.

\begin{lemma}%
	\label{l:palm}
	Let $f$ be a Gaussian field satisfying Assumption~\ref{a:minimal}. For
	$\ell \in \mathbb{R}$ and $\epsilon >0$, let
	\begin{displaymath}
	\tilde{f}_{[\ell ,\ell +\epsilon ]}=(f\;|\{0\}\in S[\ell ,\ell +
	\epsilon ])
	\end{displaymath}
	be the Palm distribution of $f$ relative to
	$S[\ell ,\ell +\epsilon ]$, the point process of saddle points with level
	in $[\ell , \ell +\epsilon ]$ (this point process is non-degenerate by
	Lemma~\ref{a:nondegen3}). Then there exists a random field
	$\tilde{f}_{\ell }$ such that, as $\epsilon \to 0$,
	$\tilde{f}_{[\ell ,\ell +\epsilon ]}$ converges in distribution to
	$\tilde{f}_{\ell }$ in the topology of uniform $C^{2+\eta }$ convergence on
	compacts.
\end{lemma}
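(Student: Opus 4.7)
The plan is to use the Kac-Rice metatheorem together with a Gaussian regression to write $\tilde f_{[\ell,\ell+\epsilon]}$ explicitly as a measurable function of a finite-dimensional random vector and an independent residual field, and then to identify $\tilde f_\ell$ as the $\epsilon\to 0$ limit by a continuous mapping argument.

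First I would apply the Kac-Rice meta-theorem (valid under Assumption~\ref{a:minimal}) to the numerator and denominator in the definition \eqref{e:palm} to obtain, for any bounded continuous cylinder functional $F$ on $C^{2+\eta}_{\text{loc}}(\mathbb{R}^2)$,
\begin{equation*}
\mathbb{E}\bigl[F(\tilde f_{[\ell,\ell+\epsilon]})\bigr] = \frac{\int_{\ell}^{\ell+\epsilon}\mathbb{E}\bigl[F(f)\,|\det\nabla^2 f(0)|\,\mathds 1_{\det\nabla^2 f(0)<0}\,\bigm|\,\nabla f(0)=0,\,f(0)=y\bigr]\,\varphi(y)\,dy}{\int_{\ell}^{\ell+\epsilon}\mathbb{E}\bigl[|\det\nabla^2 f(0)|\,\mathds 1_{\det\nabla^2 f(0)<0}\,\bigm|\,\nabla f(0)=0,\,f(0)=y\bigr]\,\varphi(y)\,dy},
\end{equation*}
where $\varphi(y)$ denotes the joint Gaussian density of $(f(0),\nabla f(0))$ at $(y,0)$. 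The non-degeneracy of the conditioning vector $(f(0),\nabla f(0),\nabla^2 f(0))$, which follows from Assumption~\ref{a:minimal}(2) combined with the fact that stationarity decorrelates $\nabla f(0)$ from $(f(0),\nabla^2 f(0))$, makes the conditional laws well-defined.

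Next I would carry out the Gaussian regression
\begin{equation*}
f(x) = f(0)\,h_0(x) + \langle\nabla f(0),\mathbf h_1(x)\rangle + \langle\nabla^2 f(0),\mathbf h_2(x)\rangle + g(x),
\end{equation*}
where $h_0,\mathbf h_1,\mathbf h_2$ are deterministic $C^{2+\eta}_{\text{loc}}$ functions determined by $K$ and $g$ is a centred Gaussian field independent of $(f(0),\nabla f(0),\nabla^2 f(0))$ with $g(0)=0$, $\nabla g(0)=0$, $\nabla^2 g(0)=0$. Inserting this decomposition into the displayed formula shows that $\tilde f_{[\ell,\ell+\epsilon]}$ has the same law as
\begin{equation*}
\tilde Y_\epsilon\,h_0(\cdot) + \langle\tilde H_\epsilon,\mathbf h_2(\cdot)\rangle + g(\cdot),
\end{equation*}
where $g$ is independent of $(\tilde Y_\epsilon,\tilde H_\epsilon)$, the latter is supported on $[\ell,\ell+\epsilon]\times\{H:\det H<0\}$, and has joint density proportional to $|\det H|\,\varphi_{f(0),\nabla^2 f(0)}(y,H)$. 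Letting $\tilde H_\ell$ denote the random Hessian with the analogous density at the single value $y=\ell$, I would define
\begin{equation*}
\tilde f_\ell := \ell\,h_0(\cdot) + \langle\tilde H_\ell,\mathbf h_2(\cdot)\rangle + g(\cdot).
\end{equation*}

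Weak convergence in $C^{2+\eta}$ uniformly on compacts then follows from the continuous mapping theorem: continuity of the Gaussian density of $(f(0),\nabla^2 f(0))$ at $y=\ell$, together with the fixed weight $|\det H|\,\mathds 1_{\det<0}$, yields $(\tilde Y_\epsilon,\tilde H_\epsilon)\Rightarrow(\ell,\tilde H_\ell)$ as $\epsilon\to 0$, and by independence $(\tilde Y_\epsilon,\tilde H_\epsilon,g)\Rightarrow(\ell,\tilde H_\ell,g)$ jointly; the map $(y,H,u)\mapsto y\,h_0+\langle H,\mathbf h_2\rangle+u$ is continuous and linear from $\mathbb R\times\mathrm{Sym}_{2\times 2}\times C^{2+\eta}(K)$ into $C^{2+\eta}(K)$ for every compact $K$. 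The main obstacle I expect is the rigorous justification of the Kac-Rice formula against a functional test $F$ that depends on the full sample path rather than on finitely many pointwise values; this requires checking that the conditional expectation of $F(f)$ given $(f(0),\nabla f(0),\nabla^2 f(0))=(y,0,H)$ depends continuously on $(y,H)$ in the topology of $C^{2+\eta}_{\text{loc}}$, which is precisely what the explicit regression decomposition above provides.
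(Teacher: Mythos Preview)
Your proposal is correct and follows essentially the same route as the paper: apply Kac--Rice to identify the Palm distribution, use Gaussian regression to write $\tilde f_{[\ell,\ell+\epsilon]}$ as a deterministic $C^{2+\eta}$-continuous function of a finite-dimensional vector $(\tilde Y_\epsilon,\tilde H_\epsilon)$ plus an independent residual field $g$, and then pass to the limit via weak convergence of $(\tilde Y_\epsilon,\tilde H_\epsilon)\Rightarrow(\ell,\tilde H_\ell)$. The paper computes characteristic functions of finite-dimensional marginals and then builds an explicit Skorokhod-type coupling to upgrade to $C^{2+\eta}_{\text{loc}}$ convergence, whereas you invoke the continuous mapping theorem directly; both arguments are equivalent here.

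One small caveat: you assert that Assumption~\ref{a:minimal}(2) together with stationarity gives non-degeneracy of the full vector $(f(0),\nabla f(0),\nabla^2 f(0))$, but Assumption~\ref{a:minimal}(2) only guarantees non-degeneracy of $\nabla^2 f(0)$. For the Random Plane Wave, $f(0)+\partial_{11}f(0)+\partial_{22}f(0)=0$ deterministically, so $(f(0),\nabla^2 f(0))$ is degenerate and your regression coefficients $h_0,\mathbf h_2$ are not defined as written. The paper sidesteps this by assuming $(f(0),\nabla^2 f(0))$ non-degenerate and remarking that the degenerate case is handled by regressing on a maximal non-degenerate subvector instead; you should add the same caveat.
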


It is important to distinguish $\tilde{f}_{[\ell ,\ell +\epsilon ]}$ from
the conditioned field
\begin{align*}
& (f(t)|\nabla f(0)= {0},\det \nabla ^{2} f(0)<0,f(0)\in [\ell ,\ell +
\epsilon ]),
\end{align*}
which is defined via the distributional limit
\begin{equation*}
\lim _{\delta \to 0}(f(t)| f_{1}(0),f_{2}(0)\in [0,\delta ),\det
\nabla ^{2} f(0)<0,f(0)\in [\ell ,\ell +\epsilon ]) .
\end{equation*}
The latter is sometimes known as `vertical window conditioning', and is
the standard way of conditioning on part of a random vector (for a Gaussian
vector, this conditioning is given explicitly by Gaussian regression, see
\cite[Proposition~1.2]{azais2009level}). By constrast, the former can be
thought of as `horizontal window conditioning', and corresponds to sampling
a `typical' saddle point (i.e.\ via the counting measure). The difference
between these forms of conditioning is elegantly explained in
\cite{kac1959}.

Using basic properties of Gaussian fields, we can derive explicit representations
for $\tilde{f}_{[\ell ,\ell +\epsilon ]}$ and $\tilde{f}_{\ell }$:

\begin{lemma}%
	\label{l:conditional distribution}
	Let $f$ be a Gaussian field satisfying Assumption~\ref{a:minimal} such
	that $(f(0),\nabla ^{2} f(0))$ is a non-degenerate Gaussian vector. Define
	$\alpha :\mathbb{R}^{2}\to \mathbb{R}$ and
	$\beta =(\beta _{11},\beta _{22},\beta _{12}):\mathbb{R}^{2}\to
	\mathbb{R}^{3}$ to be the unique functions satisfying
	\begin{displaymath}
	\mathbb{E}\left (f(t)\middle |f(0)=u,\nabla ^{2} f(0)={U}\right )=
	\alpha (t)u+\beta (t)\cdot U
	\end{displaymath}
	for all $u\in \mathbb{R}$, $U\in \mathbb{R}^{3}$, and define
	\begin{displaymath}
	\gamma (s, t) := \mathbb{E}\left (f(s)f(t)\middle |f(0)=0,\nabla f(0)={0},
	\nabla ^{2} f(0)={0}\right ) .
	\end{displaymath}
	Then
	\begin{displaymath}
	\tilde{f}_{[\ell ,\ell +\epsilon ]}\overset{d}{=}g+ z_{[\ell ,\ell +
		\epsilon ]}\alpha +Z_{[\ell ,\ell +\epsilon ]}\cdot \beta ,
	\end{displaymath}
	where $g$ is a centred Gaussian field with covariance function
	$\gamma $, and
	$\left (z_{[\ell ,\ell +\epsilon ]},Z_{[\ell ,\ell +\epsilon ]}
	\right )$ is an independent random vector with density\footnote{Here and
		in the proof of this lemma we treat $Z_{[\ell ,\ell +\epsilon ]}$ interchangeably
		as the three-dimensional column vector
		$(Z_{[\ell ,\ell +\epsilon ],11},Z_{[\ell ,\ell +\epsilon ],22},Z_{[
			\ell ,\ell +\epsilon ],12})$ and the symmetric $2\times 2$ matrix
		$
		\Big(\begin{smallmatrix}
		Z_{[\ell ,\ell +\epsilon ],11} &Z_{[\ell ,\ell +\epsilon ],12}
		\\
		Z_{[\ell ,\ell +\epsilon ],12} &Z_{[\ell ,\ell +\epsilon ],22}
		\end{smallmatrix}\Big)
		$; which form we are using will always be clear from context. We also use
		this convention for $Z_{\ell }$ and $X$ (introduced below).}
	\begin{displaymath}
	p_{\left (z_{[\ell ,\ell +\epsilon ]},Z_{[\ell ,\ell +\epsilon ]}
		\right )}(x,X)\propto \lvert \det X\rvert \; p_{f(0),\nabla ^{2} f(0)}(x,X)
	\;\mathds{1}_{x\in [\ell ,\ell +\epsilon ]}\mathds{1}_{\det X<0}.
	\end{displaymath}
	Moreover,
	\begin{displaymath}
	\tilde{f}_{\ell }\overset{d}{=}g+ \ell \alpha +Z_{\ell }\cdot \beta
	\end{displaymath}
	where $Z_{\ell }$ is a random vector, independent of $g$, with density
	\begin{displaymath}
	p_{Z_{\ell }}(X)\propto \lvert \det X\rvert \; p_{f(0),\nabla ^{2}f(0)}(
	\ell ,X)\mathds{1}_{\det X<0}.
	\end{displaymath}
\end{lemma}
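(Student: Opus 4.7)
The plan is to compute the Palm distribution by a marked Kac–Rice formula and then decouple the conditioning using Gaussian regression.

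First I would apply Kac–Rice to the zeros of $\nabla f$, marked by the indicator that $\det\nabla^2 f<0$ and $f(0)\in[\ell,\ell+\epsilon]$, and weighted by an arbitrary cylinder functional $\mathds{1}_{f\in A}$. By stationarity both the numerator and denominator of the Palm ratio \eqref{e:palm} reduce to a single evaluation at the origin; after cancelling the common factor $|B|\,p_{\nabla f(0)}(0)$, I would obtain
$$\mathbb{P}\left(\tilde{f}_{[\ell,\ell+\epsilon]}\in A\right) = \frac{\mathbb{E}\left(|\det\nabla^2 f(0)|\,\mathds{1}_{\det\nabla^2 f(0)<0}\mathds{1}_{f(0)\in[\ell,\ell+\epsilon]}\mathds{1}_{f\in A}\,\big|\,\nabla f(0)=0\right)}{\mathbb{E}\left(|\det\nabla^2 f(0)|\,\mathds{1}_{\det\nabla^2 f(0)<0}\mathds{1}_{f(0)\in[\ell,\ell+\epsilon]}\,\big|\,\nabla f(0)=0\right)}.$$

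Next I would decouple the conditioning using Gaussian regression. Since $K$ is an even $C^{4+\eta'}$ function, all third-order partial derivatives of $K$ vanish at the origin, so $\nabla f(0)$ is uncorrelated with, and hence independent of, the Gaussian vector $(f(0),\nabla^2 f(0))$. Regressing $f(t)$ onto the triple $(f(0),\nabla f(0),\nabla^2 f(0))$ yields a decomposition
$$f(t) = \alpha(t) f(0) + (\textrm{linear in }\nabla f(0)) + \beta(t)\cdot\nabla^2 f(0) + h(t),$$
where the residual $h$ is a centred Gaussian field, independent of $(f(0),\nabla f(0),\nabla^2 f(0))$, with covariance $\gamma(s,t)$. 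Conditioning on $\nabla f(0)=0$ both kills the middle term and leaves the joint law of $(f(0),\nabla^2 f(0))$ unchanged. Substituting into the Palm formula and integrating out the independent copy of $h$ by Fubini, the factor $|\det\nabla^2 f(0)|\,\mathds{1}_{\det<0}\mathds{1}_{f(0)\in[\ell,\ell+\epsilon]}$ simply reweights the joint density of $(f(0),\nabla^2 f(0))$, producing the stated representation of $\tilde{f}_{[\ell,\ell+\epsilon]}$ with $(z_{[\ell,\ell+\epsilon]},Z_{[\ell,\ell+\epsilon]})$ having the claimed density.

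For $\tilde{f}_\ell$, I would invoke the $C^{2+\eta}$ convergence guaranteed by Lemma~\ref{l:palm}. Continuity of $p_{f(0),\nabla^2 f(0)}$ (from non-degeneracy of $(f(0),\nabla^2 f(0))$) combined with dominated convergence shows that the joint law of $(z_{[\ell,\ell+\epsilon]},Z_{[\ell,\ell+\epsilon]})$, after renormalisation by $\epsilon$, converges weakly to the product of $\delta_\ell$ and the density $p_{Z_\ell}$. Passing to the limit in the explicit representation of $\tilde{f}_{[\ell,\ell+\epsilon]}$, and noting that $g$ does not depend on $\epsilon$, then yields the formula for $\tilde{f}_\ell$.

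The main obstacle I expect is the initial application of Kac–Rice in its marked form: the classical statement counts zeros of $\nabla f$, whereas here one must integrate an arbitrary measurable functional of the whole field $f$ against the resulting point process. This requires a generalised version of the formula together with verification of its hypotheses (continuity of the joint density of $(\nabla f(0),\nabla^2 f(0))$ at the origin and integrability of $|\det\nabla^2 f(0)|$), both of which follow from Assumption~\ref{a:minimal} and the non-degeneracy of $(f(0),\nabla^2 f(0))$ assumed in the lemma. Everything else is essentially bookkeeping.
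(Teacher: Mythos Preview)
Your proposal is correct and follows essentially the same route as the paper: apply a marked Kac--Rice formula to express the Palm distribution as a conditional expectation weighted by $|\det\nabla^2 f(0)|$, then use Gaussian regression on $(f(0),\nabla f(0),\nabla^2 f(0))$ together with the independence of $\nabla f(0)$ from $(f(0),\nabla^2 f(0))$ to decouple the residual field $g$ from the reweighted Hessian/level pair, and finally pass to the limit $\epsilon\to 0$. The only cosmetic difference is that the paper identifies the finite-dimensional laws via characteristic functions rather than by directly substituting your regression decomposition, but the content is the same.
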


The functions $\alpha $, $\beta $ and $\gamma $ in Lemma~\ref{l:conditional distribution}
can be computed explicitly via Gaussian regression (see
\cite[Proposition 1.2]{azais2009level}). Specifically, define
${v}_{0}=(f(0),\partial _{xx}f(0),\partial _{yy}f(0),\partial _{xy}f(0))$
and
\begin{equation*}
{v}=(f(0),\nabla f(0),\partial _{xx}f(0),\partial _{yy}f(0),\partial _{xy}f(0))
,
\end{equation*}
and let $\Sigma _{0}$ and $\Sigma $ be the respective covariance matrices
of these vectors. Then
\begin{align*}
(\alpha (t),\beta _{11}(t),\beta _{22}(t),\beta _{12}(t))=
\mathrm{Cov}\left (f(t),{v}_{0}\right )\Sigma _{0}^{-1}
\end{align*}
and
\begin{align*}
\gamma (s,t)=\mathrm{Cov}\left (f(s),f(t)\right )-\mathrm{Cov}\left (f(s),{v}
\right )\Sigma ^{-1}\mathrm{Cov}\left (f(t),{v}\right )^{\prime }.
\end{align*}

In the case that $(f(0),\nabla ^{2} f(0))$ is degenerate (which includes
the RPW), the representations of
$\tilde{f}_{[\ell ,\ell +\epsilon ]}$ and $\tilde{f}_{\ell }$ in Lemma~\ref{l:conditional distribution}
must be modified to accommodate this degeneracy; in particular,
$\nabla ^{2} f(0)$ should be considered as a vector consisting of two of
its coordinates, chosen so that they are non-degenerate with $f(0)$, and
$\alpha , \beta $ and $\gamma $ defined accordingly. For simplicity we
will not state this representation formally; for the RPW we state a more
precise description below (in Proposition~\ref{p:ftildeRPW}).

Lemmas~\ref{l:palm} and~\ref{l:conditional distribution} are essentially
derived in \cite[Chapter 6]{Adler07applicationsof}; we repeat this here
for completeness, and so that we can extend the arguments slightly.

\begin{proof}[Proof of Lemmas~\ref{l:palm} and~\ref{l:conditional distribution}]
	We assume that $(f(0),\nabla ^{2} f(0))$ is non-degenerate, since the proof
	in the degenerate case is almost identical. Let
	$T=(t_{1},\dots ,t_{m})\in \mathbb{R}^{2m}$ and
	$y_{1},\dots ,y_{m}\in \mathbb{R}$. Then by the definition of
	$\tilde{f}_{[\ell ,\ell +\epsilon ]}$, and the Kac-Rice theorem (\cite[Corollary~11.2.2]{RFG}),
	\begin{align*}
	\mathbb{P}&\left (\tilde{f}_{[\ell ,\ell +\epsilon ]}(t_{1})\leq y_{1},
	\dots ,\tilde{f}_{[\ell ,\ell +\epsilon ]}(t_{m})\leq y_{m}\right )
	\\
	&\quad \quad \quad =
	\frac{\mathbb{E}\left (\#\{s\in B(1):\nabla f(s)={0},\det \nabla ^{2}f(s)<0,f(s)\in [\ell ,\ell +\epsilon ],f(s+t_{i})\leq y_{i}\;\forall i\right )}{\mathbb{E}\left (\#\{s\in B(1):\nabla f(s)={0},\det \nabla ^{2}f(s)<0,f(s)\in [\ell ,\ell +\epsilon ]\right )}
	\\
	&\quad \quad \quad =
	\frac{\mathbb{E}\left (\lvert \det \nabla ^{2}f(0)\rvert \mathds{1}_{f(0)\in [\ell ,\ell +\epsilon ]}\mathds{1}_{\det \nabla ^{2}f(0)<0}\prod _{i=1}^{m}\mathds{1}_{f(t_{i})\leq y_{i}}\middle |\nabla f(0)={0}\right )}{\mathbb{E}\left (\lvert \det \nabla ^{2} f(0)\rvert \mathds{1}_{f(0)\in [\ell ,\ell +\epsilon ]}\mathds{1}_{\det \nabla ^{2}f(0)<0}\middle |\nabla f(0)={0}\right )}
	\\
	&\quad \quad \quad =\int _{-\infty }^{y_{1}}\dots \int _{-\infty }^{y_{m}}
	\frac{\int _{\mathbb{R}^{3}}\int _{\ell }^{\ell +\epsilon }\lvert \det X\rvert p_{T}(x,{0},X,U)\mathds{1}_{\det X<0}\;dx\;dX}{\int _{\mathbb{R}^{3}}\int _{\ell }^{\ell +\epsilon }\lvert \det X\rvert p(x,{0},X)\mathds{1}_{\det X<0}\;dx\;dX}
	\;dU_{m}\;\dots \;dU_{1},
	\end{align*}
	where $p_{T}$ and $p$ denote respectively the densities of
	$(f(0),\nabla f(0),\nabla ^{2} f(0),f(t_{1}),\dots , f(t_{m}))$ and
	$(f(0),\nabla f(0),\nabla ^{2} f(0))$. We note that $p$ is non-degenerate
	since $\nabla f(0)$ is independent of $(f(0),\nabla ^{2} f(0))$ (this is
	a standard fact for Gaussian fields with constant variance, see
	\cite[Chapter 5]{RFG}) and these vectors are non-degenerate by assumption.
	The density $p_{T}$ may be degenerate, in which case we think of it as
	having atomic mass. Rearranging these terms slightly, we can express the
	joint density of
	$\left (\tilde{f}_{[\ell ,\ell +\epsilon ]}(t_{i})\;:i=1,\dots ,m
	\right )$ as
	\begin{displaymath}
	\varphi _{T}^{[\ell ,\ell +\epsilon ]}(U):=\int _{\mathbb{R}^{3}}
	\int _{\ell }^{\ell +\epsilon }\psi _{x}(X)p_{T}(x,{0},X,U)/p(x,{0},X)\;dx
	\;dX
	\end{displaymath}
	where
	\begin{displaymath}
	\psi _{x}(X)=
	\frac{\lvert \det X\rvert p(x,{0},X)\mathds{1}_{\det X<0}}{\int _{\mathbb{R}^{3}}\int _{\ell }^{\ell +\epsilon }\lvert \det X\rvert p(x,{0},X)\mathds{1}_{\det X<0}\;dx\;dX}.
	\end{displaymath}
	Then the characteristic function of
	$\left (\tilde{f}_{[\ell ,\ell +\epsilon ]}(t_{1}),\dots ,\tilde{f}_{[
		\ell ,\ell +\epsilon ]}(t_{m})\right )$ is given by
	\begin{equation}
	\label{e:Characteristic function}
	\hat{\varphi }_{T}^{[\ell ,\ell +\epsilon ]}(\theta )=\int _{
		\mathbb{R}^{3}}\int _{\ell }^{\ell +\epsilon }\psi _{x}(X)\int _{
		\mathbb{R}^{m}}e^{i\theta \cdot U}p_{T}(x,{0},X,U)/p(x,{0},X)\;dU\;dx
	\;dX.
	\end{equation}
	The inner integral of equation \eqref{e:Characteristic function} can be
	calculated by Gaussian regression (see
	\cite[Proposition~1.2]{azais2009level}). Specifically, let
	$A=(\alpha (t_{1}),\dots ,\alpha (t_{m}))^{\prime }$,
	$B=(\beta (t_{1}),\dots ,\beta (t_{m}))^{\prime }$ and
	$\Gamma =(\gamma (t_{i},t_{j}))_{i,j=1,\dots ,m}$, then
	\begin{align*}
	\left (f(t_{1}),\dots ,f(t_{m})\;\middle |f(0)=x,\nabla f(0)={0},
	\nabla ^{2} f(0)=X\right )\sim \mathcal{N}(Ax+BX,\Gamma ).
	\end{align*}
	Since $p_{T}(x,0,X,U)/p(x,0,X)$ is the probability density of this random
	variable, we can substitute the characteristic function of a Gaussian vector
	into \eqref{e:Characteristic function} to give
	\begin{align*}
	\hat{\varphi }_{T}^{[\ell ,\ell +\epsilon ]}(\theta )&=\int _{
		\mathbb{R}^{3}}\int _{\ell }^{\ell +\epsilon }\psi _{x}(X)e^{i\theta
		\cdot (Ax+BX)-\frac{1}{2}\theta ^{\prime }\Gamma \theta }\;dx\;dX
	\\
	&=e^{-\frac{1}{2}\theta ^{\prime }\Gamma \theta }
	\frac{\int _{\mathbb{R}^{4}}e^{i\theta \cdot (Ax+BX)}\lvert \det X\rvert p(x,{0},X)\mathds{1}_{x\in [\ell ,\ell +\epsilon ]}\mathds{1}_{\det X<0}\;dx\;dX}{\int _{\mathbb{R}^{4}}\lvert \det X\rvert p(x,{0},X)\mathds{1}_{x\in [\ell ,\ell +\epsilon ]}\mathds{1}_{\det X<0}\;dx\;dX}.
	\end{align*}
	Since the characteristic function of a random vector uniquely specifies
	its distribution, we identify the distribution of
	$\tilde{f}_{[\ell ,\ell +\epsilon ]}$ as that given in the statement of
	Lemma~\ref{l:conditional distribution} (using the fact that
	$p(x,{0},X)=p_{f(0),\nabla ^{2}f(0)}(x,X)$ since $\nabla f(0)$ is independent
	of $\left (f(0),\nabla ^{2} f(0)\right )$).
	
	By inspecting their joint distribution, it is clear that
	$z_{[\ell ,\ell +\epsilon ]}\xrightarrow{d}\ell $ and
	$Z_{[\ell ,\ell +\epsilon ]}\xrightarrow{d}Z_{\ell }$ as
	$\epsilon \rightarrow 0$. We now fix a sequence
	$\epsilon _{i}\downarrow 0$, and create a coupling of
	$\tilde{f}_{[\ell ,\ell +\epsilon _{i}]}$ for each $i$ such that each field
	consists of the same realisation of $g$ and the sequences
	$\{z_{[\ell ,\ell +\epsilon _{i}]}\}_{i\in \mathbb{N}}$ and
	$\{Z_{[\ell ,\ell +\epsilon _{i}]}\}_{i\in \mathbb{N}}$ converge almost
	surely. Since $K\in C^{4+\eta ^{\prime }}_{\text{loc}}(\mathbb{R}^{2})$, the
	same is true of $\alpha $, $\beta $ and $\gamma $ and hence
	$g\in C^{2+\eta }_{\text{loc}}(\mathbb{R}^{2})$ almost surely for the choice
	of $\eta \in (0,\eta ^{\prime }/2)$ made at the beginning of Section~\ref{s:Main results}
	(Kolmogorov's theorem \cite[Appendix~A]{SodinNazarov2015asymptotic}). It
	is therefore clear that the coupled fields
	$\tilde{f}_{[\ell ,\ell +\epsilon _{i}]}$ converge almost surely in the
	$C^{2+\eta }$ topology uniformly on compact sets as
	$i\rightarrow \infty $ to $\tilde{f}_{\ell }$. This completes the proof of
	the lemmas.
\end{proof}

We now present simpler descriptions for $\tilde{f}_{\ell }$ in the case of
isotropic fields. In this case it is quite natural to express the Hessian
component $Z_{\ell }$ in terms of its eigenvalues
$\lambda _{1}<\lambda _{2}$ and the argument $\theta $ of the first eigenvector.
Recall the parameter
$\chi =-k^{\prime }(0)/\sqrt{k^{\prime \prime }(0)}\in (0,\sqrt{2}]$, where
$K(t)=k(\lvert t\rvert ^{2})$. Again we must distinguish the case in which
$(f(0),\nabla ^{2} f(0))$ is degenerate, which corresponds to
$\chi = \sqrt{2}$ and implies that $f$ is (a rescaled version of) the RPW.

\begin{proposition}%
	\label{p:ftildeiso}
	Let $f$ be an isotropic field satisfying Assumption~\ref{a:minimal} such
	that $\chi < \sqrt{2}$. Then
	\begin{displaymath}
	\tilde{f}_{\ell }(\cdot )\overset{d}{=}g(\cdot )+\ell \alpha (\cdot )+
	\lambda _{1} b_{1}(\cdot ,\theta )+\lambda _{2} b_{2}(\cdot ,\theta ) ,
	\end{displaymath}
	where $g, \alpha $ and $\beta $ are as in Lemma~\ref{l:conditional distribution},
	\begin{align*}
	b_{1}(t,\theta )&=\cos ^{2}(\theta )\beta _{11}(t)+\sin ^{2}(\theta )
	\beta _{22}(t)+\sin (\theta )\cos (\theta )\beta _{12}(t),
	\\
	b_{2}(t,\theta )&=\sin ^{2}(\theta )\beta _{11}(t)+\cos ^{2}(\theta )
	\beta _{22}(t)+\sin (\theta )\cos (\theta )\beta _{12}(t),
	\end{align*}
	$\theta $ is an independent random variable uniform on $[0, 2\pi )$, and
	$(\lambda _{1},\lambda _{2})$ is an independent random vector with density
	proportional to
	\begin{displaymath}
	q_{\ell }(x,y):=\lvert x\rvert y(y-x)\mathds{1}_{y>0>x}\exp \left (-
	\frac{1}{2\sigma ^{2}}\left ((x-\mu \ell )^{2}+(y-\mu \ell )^{2}+2
	\tau (x-\mu \ell )(y-\mu \ell )\right ) \right ),
	\end{displaymath}
	where
	\begin{equation}
	\label{e:density parameters}
	\mu =2 k^{\prime }(0) \ , \quad \sigma ^{2}=
	\frac{16k^{\prime \prime }(0)(2-\chi ^{2})}{3-\chi ^{2}} \quad
	\text{and} \quad \tau =\frac{\chi ^{2}-1}{3-\chi ^{2}}.
	\end{equation}
\end{proposition}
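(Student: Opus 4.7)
The plan is to derive the proposition by applying Lemma~\ref{l:conditional distribution} and then changing variables from the entries of $Z_\ell$ to its spectral data. The assumption $\chi < \sqrt{2}$ is precisely what guarantees that $(f(0), \nabla^2 f(0))$ is non-degenerate in the isotropic case (which will be checkable from the covariance computations below), so Lemma~\ref{l:conditional distribution} delivers the decomposition $\tilde{f}_\ell \overset{d}{=} g + \ell \alpha + Z_\ell \cdot \beta$ with $Z_\ell$ having density proportional to $|\det X|\, p_{f(0), \nabla^2 f(0)}(\ell, X) \mathds{1}_{\det X < 0}$. Since $\det Z_\ell < 0$ forces the eigenvalues to satisfy $\lambda_1 < 0 < \lambda_2$, it is natural to write $Z_\ell = \lambda_1 v_1 v_1^T + \lambda_2 v_2 v_2^T$ with $v_1 = (\cos\theta, \sin\theta)$ and $v_2 = (-\sin\theta, \cos\theta)$.

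Expanding this parameterization yields
\begin{displaymath}
X_{11} = \lambda_1 \cos^2\theta + \lambda_2 \sin^2\theta, \quad X_{22} = \lambda_1 \sin^2\theta + \lambda_2 \cos^2\theta, \quad X_{12} = (\lambda_1 - \lambda_2)\sin\theta\cos\theta,
\end{displaymath}
from which the claimed identity $Z_\ell \cdot \beta = \lambda_1 b_1(\cdot, \theta) + \lambda_2 b_2(\cdot, \theta)$ may be read off directly. The change-of-variables Jacobian $(X_{11}, X_{22}, X_{12}) \mapsto (\lambda_1, \lambda_2, \theta)$ is the standard spectral Jacobian $|\lambda_1 - \lambda_2|$ for symmetric $2 \times 2$ matrices, and combined with $|\det X| = |\lambda_1 \lambda_2|$ it already reproduces the prefactor $|x| y (y-x) \mathds{1}_{y > 0 > x}$ appearing in $q_\ell$. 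Furthermore, by isotropy of $f$ the Gaussian density $p_{f(0), \nabla^2 f(0)}(\ell, X)$ depends on $X$ only through its eigenvalues, which instantly gives independence of $\theta$ from $(\lambda_1, \lambda_2)$ together with uniformity of $\theta$ on its fundamental domain; extending to $[0, 2\pi)$ accounts for the standard multiplicity in parameterizing an unordered pair of orthonormal eigenvectors.

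The remaining task, and the main obstacle, is to verify that the Gaussian exponent matches the exponent in the definition of $q_\ell$ with the stated $\mu$, $\sigma^2$, $\tau$. To do this I would differentiate $K(x) = k(|x|^2)$ up to fourth order at the origin to obtain $\mathrm{Cov}(f(0), f_{ii}(0)) = 2k'(0)$, $\mathrm{Var}(f_{ii}(0)) = 12 k''(0)$, $\mathrm{Cov}(f_{11}(0), f_{22}(0)) = 4 k''(0)$, and $\mathrm{Var}(f_{12}(0)) = 4 k''(0)$ with $f_{12}(0)$ independent of $(f(0),f_{11}(0),f_{22}(0))$. Inverting the resulting $3 \times 3$ covariance matrix of $(f(0), f_{11}(0), f_{22}(0))$ and evaluating the Gaussian exponent at $X = \mathrm{diag}(\lambda_1, \lambda_2)$ produces a quadratic form in $(\ell, \lambda_1, \lambda_2)$; using $k'(0)^2 = \chi^2 k''(0)$ to rewrite everything in terms of $\chi$ and $k''(0)$, and absorbing the $\ell^2$-only terms into the normalization constant, a term-by-term comparison with the exponent of $q_\ell$ then reads off the stated values of $\mu$, $\sigma^2$, and $\tau$. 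This step is routine but it is the only part that involves any genuine algebra; the conceptual content of the proposition is really just the spectral parameterization of $Z_\ell$ combined with isotropy, which together reduce an object parameterized by the three entries of a symmetric matrix to a pair of eigenvalues with an independent uniform angle.
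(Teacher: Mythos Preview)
Your proposal is correct and follows the same route as the paper: start from Lemma~\ref{l:conditional distribution}, pass to the spectral data $(\lambda_1,\lambda_2,\theta)$ of $Z_\ell$, and use isotropy to see that $\theta$ is uniform and independent of $(\lambda_1,\lambda_2)$. The only difference is that the paper's proof is extremely terse and simply cites \cite{cheng2015expected} for the joint density of the ordered eigenvalues of $(\nabla^2 f(0)\mid f(0)=\ell)$, whereas you propose to compute this density by hand from the fourth derivatives of $k$ at the origin; your covariance values and the observation that $\chi<\sqrt{2}$ is exactly the non-degeneracy condition are both correct, so this direct computation would go through.
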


\begin{proof}
	Recall the random vector $Z_{\ell }$ from Lemma~\ref{l:conditional distribution},
	which we view as a $2 \times 2$ symmetric matrix. Let
	$\lambda _{1}<\lambda _{2}$ be the eigenvalues of $Z_{\ell }$, and let
	$\theta $ be the argument of the eigenvector associated to
	$\lambda _{1}$. If $h$ denotes the bijection which maps $Z_{\ell }$ to
	$\Lambda :=(\lambda _{1},\lambda _{2},\theta )$, then for any Borel set
	$A$
	\begin{displaymath}
	\mathbb{P}\left (\Lambda \in h(A)\right )=
	\frac{\mathbb{E}\left (\left \lvert \det \nabla ^{2} f(0)\right \rvert \;\mathds{1}_{\det \nabla ^{2} f(0)<0,\nabla ^{2} f(0)\in A}\middle |f(0)=\ell \right )}{\mathbb{E}\left (\left \lvert \det \nabla ^{2} f(0)\right \rvert \;\mathds{1}_{\det \nabla ^{2} f(0)<0}\middle |f(0)=\ell \right )}.
	\end{displaymath}
	Since $f$ is isotropic and $(f(0),\nabla ^{2} f(0))$ is non-degenerate,
	\cite{cheng2015expected} derives the density of the ordered eigenvalues
	of $(\nabla ^{2} f(0)|f(0)=\ell )$ and the argument of the corresponding
	eigenvectors as that given above.
\end{proof}

\begin{proposition}%
	\label{p:ftildeRPW}
	Let $f$ be the Random Plane Wave. Then
	\begin{displaymath}
	\tilde{f}_{\ell }\overset{d}{=} g + \ell \alpha + Z^{\ell }\cdot \beta ,
	\end{displaymath}
	where $g$ is a centred Gaussian field with covariance function
	$\gamma $ (defined as in Lemma~\ref{l:conditional distribution}),
	$\alpha ,\beta $ are defined as
	\begin{align*}
	& \qquad \quad \alpha (t)=J_{0}(\lvert t\rvert )+2
	\frac{t_{1}^{2}-t_{2}^{2}}{\lvert t\rvert ^{2}}J_{2}\left (\lvert t
	\rvert \right ),
	\\
	& \beta _{11}(t)=4\frac{t_{1}^{2}-t_{2}^{2}}{\lvert t\rvert ^{2}}J_{2}(
	\lvert t\rvert ) , \qquad \beta _{12}(t)=8
	\frac{t_{1}t_{2}}{\lvert t\rvert ^{2}}J_{2}(\lvert t\rvert ),
	\end{align*}
	and $Z^{\ell }=(Z_{11}^{\ell },Z_{12}^{\ell })^{t}$ is an independent random
	vector with density
	\begin{displaymath}
	\psi _{\ell }(x,y)\propto \left (x(x+\ell )+y^{2}\right )\mathds{1}_{
		\left (x(x+\ell )+y^{2}\right )>0}\; p_{f(0),f_{11}(0),f_{12}(0)}(
	\ell ,x,y) .
	\end{displaymath}
	Alternatively, $\tilde{f}_{\ell }$ has the representation
	\begin{displaymath}
	\tilde{f}_{\ell }(t) = g(t) + \ell \cdot \left [J_{0}(\lvert t\rvert )+2
	\cos (2(\theta -\arg t))J_{2}(\lvert t\rvert )\right ]+\lambda \cdot 4
	\cos (2(\theta -\arg t))J_{2}(\lvert t\rvert ),
	\end{displaymath}
	where $\arg t$ denotes the argument of $t$ and
	$(\theta , \lambda )=(\theta ,\lambda _{\ell })$ is a random vector, independent
	of $g$, with density
	\begin{equation}
	\label{e:Lambda density}
	p_{\lambda _{\ell },\theta }(x,y) \propto x(x+\ell )(2x+\ell ) e^{-4x(x+
		\ell )}\mathds{1}_{x>\max \{0,-\ell \}}\mathds{1}_{y\in [0,2\pi )} .
	\end{equation}
\end{proposition}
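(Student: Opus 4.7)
The plan is to adapt Lemma~\ref{l:conditional distribution} to the Random Plane Wave, which requires care because $(f(0),\nabla^{2} f(0))$ is degenerate in this case. Since $K(x) = J_{0}(|x|)$ satisfies the Helmholtz equation $\Delta K + K = 0$, the RPW satisfies $\Delta f + f \equiv 0$ almost surely, so $f_{22}(0) = -f(0) - f_{11}(0)$. Thus the correct non-degenerate subvector on which to perform Gaussian regression is $(f(0), f_{11}(0), f_{12}(0))$, which explains why the representation involves a two-component vector $Z^{\ell}$ rather than three components.

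First I would repeat the Kac-Rice/Palm calculation from the proof of Lemma~\ref{l:conditional distribution}, with $(f(0), \nabla^{2} f(0))$ replaced throughout by $(f(0), f_{11}(0), f_{12}(0))$; this is legitimate since $\nabla f(0)$ remains independent of this triple. On the event $\{f(0) = \ell, f_{11}(0) = x, f_{12}(0) = y\}$, the Helmholtz constraint yields $\det \nabla^{2} f(0) = -(x(x+\ell) + y^{2})$, so the Kac-Rice weight $|\det \nabla^{2} f(0)|\,\mathds{1}_{\det \nabla^{2} f(0)<0}$ becomes $(x(x+\ell)+y^{2})\mathds{1}_{x(x+\ell)+y^{2}>0}$, producing the stated density $\psi_{\ell}$. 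The formulas for $\alpha, \beta_{11}, \beta_{12}$ are then derived by Gaussian regression: the covariance matrix of $(f(0), f_{11}(0), f_{12}(0))$ is readily computed from the spectral representation of the RPW (uniform measure on $\mathbb{S}^{1}$), and the cross-covariances $K(t), K_{11}(t), K_{12}(t)$ are obtained by direct differentiation of $J_{0}(|t|)$. The stated closed forms, involving $J_{0}$ and $J_{2}$ with angular factors $(t_{1}^{2}-t_{2}^{2})/|t|^{2} = \cos(2\arg t)$ and $2t_{1}t_{2}/|t|^{2} = \sin(2\arg t)$, then follow from the Bessel identities $2J_{1}(r)/r = J_{0}(r) + J_{2}(r)$ and $J_{1}'(r) = J_{0}(r) - J_{1}(r)/r$.

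For the second representation, I would change variables from $(Z_{11}^{\ell}, Z_{12}^{\ell})$ to $(\rho, \theta)$ via
\[
Z_{11}^{\ell} = -\ell/2 + \rho\cos 2\theta, \qquad Z_{12}^{\ell} = \rho \sin 2\theta ,
\]
which is the eigendecomposition of $\nabla^{2} f(0)$ subject to the Helmholtz trace constraint $\mathrm{tr}(\nabla^{2} f(0)) = -\ell$, with $\theta$ the angle of the eigenvector associated to the larger eigenvalue and $2\rho = \lambda_{+} - \lambda_{-}$. The Jacobian is $2\rho$, the condition $\det \nabla^{2} f(0) < 0$ becomes $\rho > |\ell|/2$, and one has $x(x+\ell) + y^{2} = \rho^{2} - \ell^{2}/4$; after the substitution $\lambda = \rho - \ell/2$ these translate into $\lambda > \max\{0, -\ell\}$, density factor $\lambda(\lambda+\ell)$, and Jacobian $2\lambda + \ell$, yielding \eqref{e:Lambda density}. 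Substituting this parametrization into the first representation, the shift $-\ell/2$ in $Z_{11}^{\ell}$ exactly cancels the $2\ell \cos(2\arg t) J_{2}(|t|)$ contribution of $\ell \alpha(t)$, and the remaining terms combine via $\cos 2\theta \cos(2\arg t) + \sin 2\theta \sin(2\arg t) = \cos(2(\theta-\arg t))$ to give the claimed pointwise expression for $\tilde{f}_{\ell}(t)$.

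The main technical obstacle is verifying that the Kac-Rice/Palm argument remains valid despite the degeneracy of $\nabla^{2} f(0)$, i.e.\ checking that the relevant Gaussian conditional densities are well defined once one passes to the non-degenerate triple $(f(0), f_{11}(0), f_{12}(0))$ and imposes the Helmholtz relation as a hard constraint; the remaining work is explicit but somewhat delicate algebra involving Bessel identities and a change of variables.
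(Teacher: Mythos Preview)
Your proposal is correct and follows essentially the same route as the paper: adapt the Palm/Kac--Rice argument of Lemma~\ref{l:conditional distribution} to the non-degenerate triple $(f(0),f_{11}(0),f_{12}(0))$ using the Helmholtz constraint $f_{22}(0)=-f(0)-f_{11}(0)$, compute $\alpha,\beta$ by Gaussian regression, and then pass to the eigenvalue parametrisation via the change of variables $Z_{11}^{\ell}+\ell/2=(\lambda+\ell/2)\cos 2\theta$, $Z_{12}^{\ell}=(\lambda+\ell/2)\sin 2\theta$ (your $\rho=\lambda+\ell/2$). The paper's proof is terser but records exactly these steps; your additional bookkeeping (Jacobian $2\rho=2\lambda+\ell$, the identity $x(x+\ell)+y^{2}=\rho^{2}-\ell^{2}/4=\lambda(\lambda+\ell)$, and the final regrouping $4\rho=2\ell+4\lambda$ that reconstitutes the $\ell\cdot 2\cos(2(\theta-\arg t))J_{2}$ term) is precisely what is needed to fill in the details.
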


\begin{proof}
	The representation
	\begin{displaymath}
	\tilde{f}_{\ell }=g+\ell \alpha +Z^{\ell }\cdot \beta
	\end{displaymath}
	follows from an argument similar to that used to prove Lemma~\ref{l:conditional distribution}.
	The functions $\alpha $ and $\beta $ can be explicitly calculated using
	Gaussian regression (see \cite[Proposition~1.2]{azais2009level} for example).
	Next we note that $(Z_{11}^{\ell },Z_{12}^{\ell })$ is supported on the region
	for which
	\begin{displaymath}
	\det
	\begin{pmatrix}
	Z_{11}^{\ell }&Z_{12}^{\ell
	}\\
	Z_{12}^{\ell }&-Z_{11}^{\ell }-\ell .
	\end{pmatrix}
	<0.
	\end{displaymath}
	Therefore this matrix almost surely has a unique, positive eigenvalue
	$\lambda $ and corresponding eigenvector with argument $\theta $. By explicitly
	diagonalising this matrix, we obtain a formula for $\lambda $ and
	$\theta $:
	\begin{align*}
	Z_{11}^{\ell }+\ell /2&=(\lambda +\ell /2)\cos (2\theta )
	\\
	Z_{12}^{\ell }&=(\lambda +\ell /2)\sin (2\theta ).
	\end{align*}
	By the standard change of variable formula (and explicitly evaluating
	$\psi _{\ell }$ in terms of the covariance of the RPW) we can calculate the
	joint density of $(\lambda ,\theta )$ to be equal to the expression in \eqref{e:Lambda density}.
\end{proof}

\section{Differentiability of excursion/level set functionals}
\label{s:Continuity}

In this section we prove the results stated in Section~\ref{ss:Differentiability}.
We begin by studying the space $C^{2+\eta }_{\text{Reg}}$ of functions
$h\in C^{2+\eta }_{\text{loc}}\left (\mathbb{R}^{2}\right )$ which have a
non-degenerate critical point at the origin and no other critical points
at level $h(0)$. We will also use the space
$C^{2+\eta }_{\text{Reg}}(R)$ which is the set of all
$h\in C^{2+\eta }_{\text{Reg}}$ such that $h(0)$ is not a critical level of
$h|_{\partial B(R)}$. We endow these spaces with the
$C^{2+\eta }_{\text{loc}}$ topology.

By showing that $\tilde{f}_{\ell }\in C^{2+\eta }_{\text{Reg}}(R)$ almost surely,
we prove that $\tilde{f}_{\ell }$ having an upper (or lower) connected saddle
point in a compact region is a continuity event, from this we deduce Theorem~\ref{t:Differentiability equivalence}
(with the other results following as consequences).

\begin{lemma}%
	\label{l:trichotomy conditional field}
	If $h \in C^{2+\eta }_{\text{Reg}}$ has a saddle point at the origin, then
	this saddle point is either upper connected, lower connected or an infinite
	four-arm saddle.
\end{lemma}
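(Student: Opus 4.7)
The plan is to reduce the statement to an elementary combinatorial argument about the local geometry of a non-degenerate saddle. Since $h \in C^{2+\eta}_{\text{Reg}}$, the origin is a non-degenerate saddle of $h$ and no other critical point lies at level $h(0)$. First I would invoke the Morse lemma (or, more concretely, apply the implicit function theorem to $h - h(0) = 0$, using that $\nabla^2 h(0)$ has one positive and one negative eigenvalue) to produce a small closed disk $\overline{D}$ on which $\{h = h(0)\} \cap \overline{D}$ is a pair of $C^1$ arcs through the origin meeting transversally at $0$. These arcs partition $D \setminus \{0\}$ into four open sectors arranged cyclically around the origin; denote them $U_+^1, U_-^1, U_+^2, U_-^2$, with $h - h(0)$ taking the indicated sign on each.

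Next I would exploit the assumption that $h(0)$ is not the value of any other critical point: by the implicit function theorem, $\{h = h(0)\} \setminus \{0\}$ is a $C^1$ one-submanifold, so within $D$ the sets $\{h > h(0)\}$ and $\{h < h(0)\}$ equal $U_+^1 \cup U_+^2$ and $U_-^1 \cup U_-^2$ respectively. In particular, any connected component of $\{h > h(0)\}$ whose closure contains $0$ must contain one of the $U_+^i$, since any open neighbourhood of $0$ intersects that component in a nonempty open set, which is necessarily covered by $U_+^1 \cup U_+^2$. Hence, letting $C_+^i$ denote the global component of $\{h > h(0)\}$ containing $U_+^i$ and defining $C_-^i$ analogously, the upper components whose closures contain $0$ are exactly $\{C_+^1, C_+^2\}$ and the lower ones exactly $\{C_-^1, C_-^2\}$, as unordered sets.

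The trichotomy then follows by pure counting. Set $k_\pm := |\{C_\pm^1, C_\pm^2\}| \in \{1,2\}$. By Definition~\ref{d:lower connected aperiodic}, the condition $k_+ = 1$ is precisely that $0$ is upper connected, and $k_- = 1$ that it is lower connected; and by the definition given in Section~\ref{ss:Differentiability}, the condition $(k_+, k_-) = (2,2)$ is exactly the infinite four-arm condition. Since $(k_+, k_-) \in \{1,2\}^2$, every possible case falls under at least one of the three conclusions, proving the claim.

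I do not anticipate a genuine obstacle here: the entire content is local and topological. The one delicate step is verifying that no extra upper or lower component can have $0$ in its closure beyond those corresponding to the four local sectors; this is precisely the role played by the Morse-type local description combined with the regularity assumption forbidding another critical point at the same level in a neighbourhood of $0$.
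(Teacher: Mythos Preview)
Your proposal is correct and follows essentially the same strategy as the paper: both begin with the Morse lemma to produce the four local sectors, then determine the trichotomy by analysing how these local pieces sit inside the global picture. The only stylistic difference is that the paper tracks the four level-set arcs emanating from the origin and asks whether any of them reconnect into a bounded loop (reading off upper/lower connectedness from the sign on the outside of that loop), whereas you track the excursion-set components $C_\pm^i$ directly and reduce to the $2\times 2$ case table in $(k_+,k_-)$. Your formulation is arguably a touch more direct given that the definitions of upper/lower connected and infinite four-arm are stated in terms of components rather than curves, but the underlying content is the same.
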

\begin{proof}
	For a small enough neighbourhood $B$ of the origin, the level set
	$\{h = h(0)\}$ in $B \setminus \{0\}$ consists of four curves that connect
	$0$ to $\partial B$ (the Morse lemma
	\cite[Lemma 2.2]{milnor1963morse}). If the connected components of these
	curves in $\mathbb{R}^{2} \setminus B$ are all unbounded, the saddle point
	must be infinite four-arm. If one of them is finite, then by the implicit
	function theorem it is a simple $C^{1}$ curve joining two points on
	$\partial B$. Hence the saddle point is either upper connected (if the
	field takes values larger than $h(0)$ on the outer boundary of the loop)
	or lower connected (if the field takes values smaller the $h(0)$ on the
	outer boundary of the loop).
\end{proof}

We now consider saddle points which are upper or lower connected in a compact
domain. Specifically, for a $C^{2}$ function $h$ with a saddle point
$x_{0}$ we say that $x_{0}$ is $R$\textit{-lower connected} if it is in
the closure of only one component of
$\{x\in B(x_{0},R):h(x)<h(x_{0})\}$. We make an analogous definition for
$R$-upper connected saddles.

\begin{lemma}%
	\label{l:continuity event}
	Let $s^{-}(R)$ be the subset of functions
	$h\in C^{2+\eta }_{\text{Reg}}(R)$ such that the origin is an $R$-lower connected
	saddle point of $h$, then $s^{-}(R)$ is open and closed in
	$C^{2+\eta }_{\text{Reg}}(R)$. The same is true for the set $s^{+}(R)$ of functions
	with $R$-upper connected saddle points.
\end{lemma}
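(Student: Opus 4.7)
The plan is to prove the stronger statement that the property \emph{``the origin is an $R$-lower connected saddle of $h$''} is locally constant on $C^{2+\eta}_{\text{Reg}}(R)$. Once this is established, both $s^{-}(R)$ and its complement inside $C^{2+\eta}_{\text{Reg}}(R)$ are open, so $s^{-}(R)$ is clopen. The local constancy in turn reduces to a topological stability statement: the planar graph $\{h=h(0)\}\cap \overline{B(R)}$, together with the sign labelling of its complementary faces, is ambient-isotopic to that of any sufficiently close $h_0$, and $R$-lower connectedness depends only on this labelled combinatorial datum.

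Fix $h_0 \in C^{2+\eta}_{\text{Reg}}(R)$. The condition ``origin is a saddle'' (i.e.\ $\det \nabla^2 h(0)<0$) is itself open in $C^{2+\eta}_{\text{loc}}$, so if it fails for $h_0$ then a whole neighbourhood of $h_0$ lies outside $s^-(R)\cup s^+(R)$ and we are done; we may thus assume the origin is a saddle of $h_0$. The Morse lemma then gives $\epsilon>0$ such that $\{h_0 = h_0(0)\}\cap B(\epsilon)$ consists of four $C^{2+\eta}$ arcs emanating from the origin and meeting $\partial B(\epsilon)$ transversally at four distinct points, splitting $B(\epsilon)$ into four sectors $U_1^+,U_1^-,U_2^+,U_2^-$ on which $h_0-h_0(0)$ alternates sign. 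Continuity of the Hessian, together with the requirement in $C^{2+\eta}_{\text{Reg}}(R)$ that the critical point be at the origin, ensures that any $h$ $C^{2+\eta}$-close to $h_0$ still has a non-degenerate saddle at $0$, with four local level-set arms and four sectors close to those of $h_0$, and with $h(0)$ close to $h_0(0)$.

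On the compact annulus $A:=\overline{B(R)}\setminus B(\epsilon)$, the value $h_0(0)$ is a regular value of $h_0|_A$ (no other interior critical points at that level) and is not a critical value of $h_0|_{\partial B(R)}$ (by definition of $C^{2+\eta}_{\text{Reg}}(R)$). Hence $\{h_0=h_0(0)\}\cap A$ is a compact $1$-dimensional $C^{2+\eta}$ submanifold-with-boundary whose endpoints lie transversally on $\partial B(\epsilon)\cup\partial B(R)$, and by compactness the transversality is quantitative. The implicit function theorem, applied uniformly on $A$, now produces an ambient isotopy of $\overline{B(R)}$ (close to the identity) carrying $\{h=h(0)\}\cap A$ onto $\{h_0=h_0(0)\}\cap A$ for every $h$ in a sufficiently small $C^{2+\eta}$ neighbourhood of $h_0$, in a way that matches the four local arms at $\partial B(\epsilon)$. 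Consequently the labelled planar graph $(\{h=h(0)\}\cap\overline{B(R)},\,\operatorname{sign}(h-h(0)))$ is isotopic to the corresponding object for $h_0$.

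Since ``$U_1^-$ and $U_2^-$ lie in the same connected component of $\{h\le h(0)\}\cap B(R)$'' is determined entirely by this labelled graph, it is preserved by the isotopy, and therefore $h\in s^-(R)\iff h_0\in s^-(R)$ for all $h$ in a small enough neighbourhood of $h_0$. This yields the claimed local constancy, and hence that $s^-(R)$ is open and closed; the argument for $s^+(R)$ is verbatim. I expect the main technical obstacle to be formalising the ``uniform ambient isotopy'' step cleanly: one needs the transversality of $\{h_0=h_0(0)\}$ to $\partial B(\epsilon)$ and to $\partial B(R)$, together with the distance of $h_0(0)$ from other critical values of $h_0|_A$ and of $h_0|_{\partial B(R)}$, to be bounded below uniformly on a neighbourhood of $h_0$ in $C^{2+\eta}_{\text{Reg}}(R)$, and then to match the four endpoints on $\partial B(\epsilon)$ correctly with the Morse-lemma arms near the origin so that the global connectivity pattern really does transfer from $h_0$ to $h$.
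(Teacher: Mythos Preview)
Your proposal is correct and takes a genuinely different route from the paper's proof.

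The paper argues more concretely and elementarily: given $h$ with an $R$-lower connected saddle, it fixes a small radius $r$ where the four-arm structure is visible, identifies two arcs $A_1,A_2\subset\partial B(r)$ on which $h<h(0)-3\epsilon$ lying in the two local sub-level components, and an explicit curve $\gamma\subset B(R)$ joining $A_1$ to $A_2$ in $\{h<h(0)\}$. Compactness of $\gamma$ and $A_1\cup A_2$ then gives a $C^{2+\eta}$-neighbourhood on which these witnesses persist, so $s^{-}(R)$ is open. Closedness is obtained not by local constancy but by observing that $C^{2+\eta}_{\text{Reg}}(R)$ is partitioned into five pieces (local max, local min, four-arm in $B(R)$, $R$-upper, $R$-lower) and checking, by the same style of argument, that each is open.

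Your approach packages all of this into a single topological stability statement: the labelled planar graph $\{h=h(0)\}\cap\overline{B(R)}$ is ambient-isotopic to that of any nearby $h_0$, and $R$-lower connectedness depends only on this combinatorial datum. This is conceptually cleaner and yields openness and closedness simultaneously, at the cost of invoking (or proving) a uniform version of the structural stability of regular level sets with transversal boundary, and of gluing it to the Morse-lemma chart at $\partial B(\epsilon)$. The paper's proof trades this machinery for explicit path- and arc-chasing, which is more self-contained but requires handling each of the five cases separately. Your honest flagging of the isotopy step as the main technical obstacle is appropriate; note in particular that you need to choose $\epsilon$ so that the four saddle arms meet $\partial B(\epsilon)$ transversally, which the Morse normal form guarantees for $\epsilon$ small enough, but which is not part of the definition of $C^{2+\eta}_{\text{Reg}}(R)$.
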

\begin{proof}
	Let $h\in C^{2+\eta }_{\text{Reg}}(R)$ have a saddle point at the origin which
	is $R$-lower connected; we will find a neighbourhood around $h$ which contains
	only functions with such saddle points at the origin. First we choose
	$r\in (0,1)$ sufficiently small that $h$ has a four-arm saddle in
	$B(r)$. Since the origin is a non-degenerate saddle point for $h$,
	$\nabla ^{2} h(0)$ has eigenvalues $\lambda _{1}<0<\lambda _{2}$ and corresponding
	eigenvectors $v_{1},v_{2}$. We now choose a neighbourhood
	$N_{1}\subset C^{2+\eta }_{\text{Reg}}(R)$ of $h$ (in the topology of uniform
	$C^{2+\eta }$ convergence) such that for all $g\in N_{1}$,
	\begin{displaymath}
	\partial _{v_{1},v_{1}} g(0)<\lambda _{1}/2\quad \text{and}\quad
	\partial _{v_{2},v_{2}} g(0)>\lambda _{2}/2.
	\end{displaymath}
	This ensures that each function in $N_{1}$ also has a saddle point at the
	origin.
	
	Next we choose $N_{2}\subset C^{2+\eta }_{\text{Reg}}(R)$ such that for each
	$g\in N_{2}$,
	\begin{displaymath}
	\|g\|_{C^{2+\eta }(B(R))}\leq 2\|h\|_{C^{2+\eta }(B(R))}.
	\end{displaymath}
	We consider the four line segments joining $0$ to $\partial B(r)$ parallel
	to $v_{1}$ and $v_{2}$ and we reduce $r$ relative to
	$\|h\|_{C^{2+\eta }(B(R))}$ so that for each $g\in N_{1}\cap N_{2}$, the
	directional derivative of $g$ on this line segment (parallel to the line
	segment) has constant sign. This ensures that for each such $g$ the saddle
	point at the origin is four-arm in $B(r)$.
	
	There exist two connected subsets $A_{1},A_{2}$ of $\partial B(r)$ such
	that $h<h(0)-3\epsilon $ on $A_{1}\cup A_{2}$ for some $\epsilon >0$ and
	$A_{1}$ and $A_{2}$ are in different components of
	$\overline{B(r)}\cap \{h<h(0)\}$ (see Figure~\ref{Fig_4}). We next choose
	a neighbourhood $N_{3}\subset C^{2+\eta }_{\text{Reg}}(R)$ of $h$ such that
	$A_{1}$ and $A_{2}$ have the same properties for any function
	$g\in N_{3}$, with $3\epsilon $ replaced by $2\epsilon $, and
	$\lvert g(0)-h(0)\rvert <\epsilon $.
	
	\begin{figure}[h!]
		\centering
		%%LEAP%%\input{Fig_4.tikz}
		\begin{tikzpicture}[scale=0.05]
		\draw[dashed] (0,0) node (v1) {} circle (20);
		\node[right] at (20,0) {$B(r)$};
		\draw[thick] plot[smooth, tension=.7] coordinates {(44,38) (41,34) (38,30) (33,28) (26,26) (16,20) (14,13) (14,6) (9,3) (3,1) (v1) (-4,2) (-9,3) (-12,7) (-16,8) (-22,9) (-26,13) (-32,15) (-37,19) (-44,20) (-50,14) (-57,10) (-52,0) (-52,-7) (-50,-18) (-45,-25) (-37,-24) (-31,-21) (-19,-17) (-13,-13) (-11,-10) (-6,-3) (v1) (7,-4) (17,-10) (23,-14) (27,-20) (32,-27) (41,-28)};
		\draw (0,0) circle (5pt);
		\draw[very thick] (70:20) arc (70:120:20);
		\draw[very thick] (-70:20) arc (-70:-120:20);
		\node at (0,7) {$-$};
		\node at (0,-8) {$-$};
		\node at (-12,-1) {$+$};
		\node at (13,-2) {$+$};
		\node[right] at (42,29) {$\{h=h(0)\}$};
		\node[above] at (95:20) {$A_1$};
		\node[below] at (-95:20) {$A_2$};
		\draw plot[smooth, tension=.7] coordinates {(-3,20) (-6,21) (-9,22) (-10,26) (-10,30) (-9,33) (-9,36)(-9,39) (-15,41) (-23,41) (-40,35) (-50,33) (-61,27) (-69,18) (-75,4) (-74,-8) (-63,-29) (-44,-41) (-29,-47) (-16,-47) (-12,-42) (-10,-39)(-9,-32) (-8,-25) (-7,-21) (-5,-20) (-4,-20)};
		\node[above] at (-40,37) {$\gamma$};
		\end{tikzpicture}
		\caption{Approximating an $R$-lower connected saddle point in the
			$C^{2+\eta }\left (\overline{B(R)}\right )$ topology.}%
		\label{Fig_4}
	\end{figure}
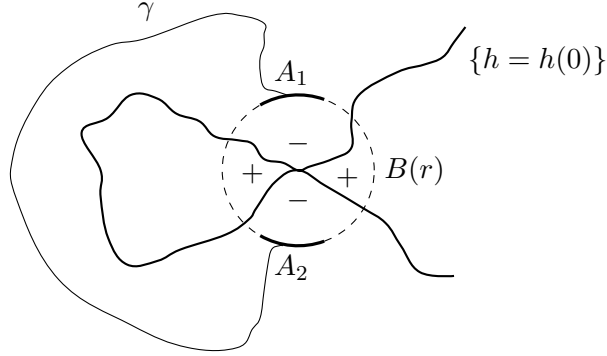
	
	By definition of a saddle being $R$-lower connected, there is a curve
	$\gamma $ in $B(R)$ joining $A_{1}$ to $A_{2}$ in $\{h<h(0)\}$ and
	$h$ is bounded above by $h(0)-3\delta $ on $\gamma $ for some
	$\delta >0$. Since $\gamma $ is compact we can find a neighbourhood
	$N_{4}$ such that $g<h(0)-2\delta $ on $\gamma $ for all
	$g\in N_{4}$ and $\lvert h(0)-g(0)\rvert <\delta $. Combining these observation,
	we see that $N:=N_{1}\cap N_{2}\cap N_{3}\cap N_{4}$ is a neighbourhood
	of $h$ (in $C^{2+\eta }_{\text{Reg}}(R)$) and any $g\in N$ has a saddle point
	at the origin which is lower connected in $B(R)$ and so the set of functions
	with such saddle points is open, as required.
	
	The set $C^{2+\eta }_{\text{Reg}}$ can be partitioned into sets of functions
	which have either a local maximum, a local minimum, a saddle point which
	is four-arm in $B(R)$ or a saddle point which is $R$-upper/lower connected
	at the origin. Arguments which are very similar to those above show that
	each of these subsets is open, hence proving the statement of the lemma.
	(For saddle points which are four-arm in $B(R)$, we use the fact that
	$h(0)$ is not a critical level of $h|_{\partial B(R)}$ which implies that
	the four level lines emanating from the origin intersect
	$\partial B(R)$ at different points.)
\end{proof}

We next confirm that $\tilde{f}_{\ell }\in C^{2+\eta }_{\text{Reg}}(R)$ almost
surely:

\begin{lemma}%
	\label{l:no four arm saddles conditional field}
	If $f$ is a Gaussian field satisfying Assumptions~\ref{a:minimal} and~\ref{a:non-degenerate gradient},
	then for any $\ell \in \mathbb{R}$ and $R>0$,
	$\tilde{f}_{\ell }\in C^{2+\eta }_{\text{Reg}}(R)$ almost surely.
\end{lemma}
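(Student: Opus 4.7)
The plan is to verify that $\tilde{f}_\ell$ satisfies almost surely each of the four defining conditions of $C^{2+\eta}_{\text{Reg}}(R)$: (a) $\tilde{f}_\ell \in C^{2+\eta}_{\text{loc}}$; (b) the origin is a non-degenerate critical point; (c) there are no other critical points of $\tilde{f}_\ell$ at the level $\tilde{f}_\ell(0)$; and (d) $\tilde{f}_\ell(0)$ is not a critical level of $\tilde{f}_\ell|_{\partial B(R)}$. Throughout I will work from the representation $\tilde{f}_\ell = g + \ell\alpha + Z_\ell\cdot\beta$ supplied by Lemma~\ref{l:conditional distribution}.

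Items (a) and (b) are essentially immediate. The functions $\alpha$, $\beta$ inherit $C^{4+\eta'}$ regularity from $K$, while $g$ is a centred Gaussian field with $C^{4+\eta'}$ covariance $\gamma$, so $g\in C^{2+\eta}_{\text{loc}}$ almost surely by Kolmogorov's theorem, yielding (a). For (b), Gaussian regression at $t=0$ gives $\alpha(0)=1$, $\beta(0)=0$, $\nabla\alpha(0)=\nabla\beta(0)=\nabla^2\alpha(0)=0$, and $\nabla^2\beta$ acting as the identity on symmetric matrices at $0$; meanwhile $g$, $\nabla g$ and $\nabla^2 g$ all vanish at $0$ almost surely, since $\gamma$ is the residual covariance after projecting $f$ onto $(f(0),\nabla f(0),\nabla^2 f(0))$. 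Hence $\tilde{f}_\ell(0)=\ell$, $\nabla\tilde{f}_\ell(0)=0$ and $\nabla^2\tilde{f}_\ell(0)=Z_\ell$, whose law is supported in $\{\det X<0\}$ by construction; so the origin is a non-degenerate saddle almost surely.

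The main step is (c), which I will establish by a Bulinskaya-type argument. Since the origin is a non-degenerate critical point of the $C^2$ function $\tilde{f}_\ell$, the inverse function theorem applied to $\nabla\tilde{f}_\ell$ produces a random neighbourhood of $0$ in which $0$ is the unique critical point; it therefore suffices to rule out critical points of $\tilde{f}_\ell$ at level $\ell$ on each compact set $K\subset\mathbb{R}^2\setminus\{0\}$ (exhausting $\mathbb{R}^2\setminus\{0\}$ by countably many such $K$). On $K$ the event in question is the vanishing of the three-dimensional $C^1$ random field $X(t):=(\nabla\tilde{f}_\ell(t),\tilde{f}_\ell(t)-\ell)$ on a two-dimensional parameter space, which a standard Bulinskaya-type lemma rules out provided the density of $X(t)$ is uniformly bounded on $K$. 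Conditionally on $Z_\ell$ the vector $X(t)$ is Gaussian with covariance
$$\mathrm{Cov}\bigl((\nabla g(t), g(t))\bigr) = \mathrm{Cov}\bigl((\nabla f(t), f(t)) \bigm| f(0),\nabla f(0),\nabla^2 f(0)\bigr),$$
which by Assumption~\ref{a:non-degenerate gradient} is non-degenerate at every $t\ne 0$. Continuity of the entries in $t$ and compactness of $K$ give a positive lower bound on the determinant, hence a uniform upper bound on the conditional density, and integrating out $Z_\ell$ preserves this bound.

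Item (d) follows from the same Bulinskaya mechanism, now applied to the two-dimensional vector $(\tilde{f}_\ell(t)-\ell,\partial_\tau\tilde{f}_\ell(t))$ on the one-dimensional submanifold $\partial B(R)$; the conditional covariance of $(f(t),\partial_\tau f(t))$ given $(f(0),\nabla f(0),\nabla^2 f(0))$ is a principal minor of the non-degenerate $3\times 3$ matrix in Assumption~\ref{a:non-degenerate gradient} and is therefore itself non-degenerate for $t\ne 0$, supplying the required uniform density bound. The main technical point throughout is the uniform density bound used in (c) and (d): this is exactly where Assumption~\ref{a:non-degenerate gradient} enters, and it is essential that the compact set on which the bound is required stay away from the origin, the latter being handled separately via the inverse function theorem.
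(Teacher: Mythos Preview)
Your proof is correct and follows essentially the same approach as the paper: both use the representation $\tilde{f}_\ell = g + \ell\alpha + Z_\ell\cdot\beta$, verify (a)--(b) directly from Gaussian regression, and prove (c)--(d) via Bulinskaya's lemma with the required uniform density bound coming from the non-degeneracy of the conditional covariance of $(\nabla g(t), g(t))$ on compacta away from the origin (Assumption~\ref{a:non-degenerate gradient}). The only cosmetic differences are that the paper exhausts $\mathbb{R}^2\setminus\{0\}$ by the annuli $T_n=\overline{B(n)}\setminus B(1/n)$ rather than invoking the inverse function theorem near $0$, and bounds the density of $X(t)$ via a convolution inequality rather than by conditioning on $Z_\ell$; also, in (d) the $2\times 2$ covariance of $(f(t),\partial_\tau f(t))$ given the germ at $0$ is not literally a principal minor of the $3\times 3$ matrix but rather its compression by the full-rank map $(a,b)\mapsto(a,b\cos\theta,b\sin\theta)$, which still inherits positive definiteness.
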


\begin{proof}
	To simplify the presentation we assume that
	$(f(0),\nabla ^{2} f(0))$ is non-degenerate; the proof in the degenerate
	case is similar. Recall the representation of $\tilde{f}_{\ell }$ in Lemma~\ref{l:conditional distribution}.
	By the definitions of $\alpha ,\beta $ and $\gamma $,
	$\tilde{f}_{\ell }$ is almost surely in
	$C^{2+\eta }_{\text{loc}}(\mathbb{R}^{2})$, and has a critical point at the
	origin at level $\ell $. By evaluating the second order derivatives of
	$\alpha ,\beta $ and $\gamma $, it follows that
	$\nabla ^{2}\tilde{f}_{\ell }(0)=Z_{\ell }$. Since the density of
	$Z_{\ell }$ is identically zero on the region where its determinant is zero,
	$\det \nabla ^{2}\tilde{f}_{\ell }(0)\neq 0$ almost surely, and so the critical
	point at the origin is non-degenerate.
	
	Next we show that $\tilde{f}_{\ell }$ almost surely has no other critical
	points at level $\ell $. Let
	$T_{n}=\overline{B(n)}\backslash B(\frac{1}{n})$ and consider
	$(\nabla \tilde{f}_{\ell },\tilde{f}_{\ell }-\ell ):T_{n}\rightarrow
	\mathbb{R}^{3}$. Bulinskaya's lemma (\cite[Lemma~11.2.10]{RFG}) states
	that this function almost surely has no zeroes in $T_{n}$ provided the
	univariate densities of
	$(\nabla \tilde{f}_{\ell }(t),\tilde{f}_{\ell }(t))$ are bounded in a neighbourhood
	of $(0,\ell )$ uniformly over $t\in T_{n}$. Since $g$ and $Z_{\ell }$ are
	independent, the density of
	$(\nabla \tilde{f}_{\ell }(t),\tilde{f}_{\ell }(t))$ is given by
	\begin{align*}
	p_{\nabla \tilde{f}_{\ell }(t),\tilde{f}_{\ell }(t)}({x})&=\int _{
		\mathbb{R}^{3}} p_{\nabla g(t),g(t)}({x}-{u})p_{\nabla (Z_{\ell
		}\cdot \beta (t)+\ell \alpha (t)),Z_{\ell }\cdot \beta (t)+\ell \alpha (t)}({u})
	\;d{u}
	\\
	&\leq \sup _{{x}\in \mathbb{R}^{3}}p_{\nabla g(t),g(t)}({x})\int _{
		\mathbb{R}^{3}} p_{\nabla (Z_{\ell }\cdot \beta (t)+\ell \alpha (t)),Z_{\ell }\cdot \beta (t)+\ell \alpha (t)}({u})\;d{u}
	\\
	&=\sup _{{x}\in \mathbb{R}^{3}}p_{\nabla g(t),g(t)}({x}).
	\end{align*}
	Therefore, to show that
	$p_{\nabla \tilde{f}_{\ell }(t),\tilde{f}_{\ell }(t)}$ is bounded, it is sufficient
	to show that the density of $(\nabla g(t),g(t))$ is bounded uniformly in
	$t$. Since these densities are Gaussian, this is equivalent to showing
	that the determinant of the covariance matrix of
	$(\nabla g(t),g(t))$ is bounded away from $0$ on $T_{n}$. However this
	is the determinant of
	\begin{displaymath}
	\mathrm{Cov}\left ( \nabla f(t), f(t) \;\middle |f(0),\nabla f(0),
	\nabla ^{2} f(0)\right )
	\end{displaymath}
	which is non-degenerate for each $t\in T_{n}$ by Assumption~\ref{a:non-degenerate gradient}.
	Since this determinant is continuous in $t$, it is bounded away from
	$0$ on the compact set $T_{n}$. Taking the countable union of
	$T_{n}$ for $n\in \mathbb{N}$ then shows that $\tilde{f}_{\ell }$ almost
	surely has no critical points at level $\ell $ in
	$\mathbb{R}^{2}\backslash \{0\}$.
	
	To verify that $\tilde{f}_{\ell }|_{\partial B(R)}$ almost surely has no
	critical points at level $\ell $, we apply an identical argument to
	\begin{displaymath}
	\left (
	\begin{pmatrix}
	-\sin (\theta )
	\\
	\cos (\theta )
	\end{pmatrix}
	\cdot \nabla \tilde{f}_{\ell }(y), \tilde{f}_{\ell }(y)\right )
	\end{displaymath}
	where $y=(R\cos (\theta ),R\sin (\theta ))$. This completes the proof that
	$\tilde{f}_{\ell }\in C^{2+\eta }_{\text{Reg}}(R)$ almost surely.
\end{proof}

We are now ready to prove Theorem~\ref{t:Differentiability equivalence}.
Let $N_{s^{-}}^{(R)}[\ell _{1},\ell _{2}]$ denote the number of $R$-lower
connected saddle points of $f$ in $B(1)$ with height in
$[\ell _{1},\ell _{2}]$. If $N_{s^{-}}^{(R)}$ is replaced with
$N_{s^{-}}$ or $N_{s}$, we make a corresponding definition for lower connected
saddle points or saddle points respectively. Recall that $s^{-}(R)$ is
the subset of functions in $C^{2+\eta }_{\text{Reg}}(R)$ with an $R$-lower
connected saddle point at the origin. We also define $s^{-}$ and
$s^{+}$ to be the subsets of $C^{2+\eta }_{\text{Reg}}$ with lower and upper
connected saddle points at the origin respectively.

\begin{proof}[Proof of Theorem~\ref{t:Differentiability equivalence}]
	Let $f$ be a field satisfying Assumptions~\ref{a:minimal} and~\ref{a:non-degenerate gradient}.
	The first step is to show that $p_{s^{-}}^{*}$ is lower semi-continuous
	by expressing it as the pointwise supremum of a sequence of continuous
	functions. Let $\ell \in \mathbb{R}$ and $\epsilon >0$ and we fix
	$R>0$. We now claim that
	\begin{displaymath}
	\frac{\mathbb{E}\left (N_{s^{-}}^{(R)}[\ell ,\ell +\epsilon ]\right )}{\mathbb{E}(N_{s}[\ell ,\ell +\epsilon ])}=
	\mathbb{P}\left (\tilde{f}_{[\ell ,\ell +\epsilon ]}\in s^{-}(R)
	\right ).
	\end{displaymath}
	We first note that, by Lemma~\ref{l:continuity event}, the event
	$s^{-}(R)$ is contained in the Borel $\sigma $-algebra generated by the
	$C^{2+\eta }_{\text{Reg}}(R)$ topology and that
	$\tilde{f}_{[\ell ,\ell +\epsilon ]}$ is measurable with respect to this
	$\sigma $-algebra. Furthermore, by an elementary argument (see, for example,
	\cite[Lemma~A.1]{SodinNazarov2015asymptotic}) this $\sigma $-algebra is
	generated by cylinder sets; those which depend on the value of the function
	at only finitely many points. Since the distribution of
	$\tilde{f}_{[\ell ,\ell +\epsilon ]}$ on cylinder sets is defined in \eqref{e:palm} as an empirical measure, these two measures must coincide
	on the $\sigma$-algebra generated by this $\pi $-system. This verifies
	the claim.
	
	By Lemma~\ref{l:palm}, $\tilde{f}_{[\ell ,\ell +\epsilon ]}$ converges
	in distribution to $\tilde{f}_{\ell }$ (in the $C^{2+\eta }_\text{loc}$ topology)
	as $\epsilon \to 0$, and since having a saddle point at the origin which
	is $R$-lower connected is a continuity event for $\tilde{f}_{\ell }$ (Lemmas~\ref{l:continuity event}
	and~\ref{l:no four arm saddles conditional field}), the portmanteau lemma
	implies that
	\begin{displaymath}
	\mathbb{P}\left (\tilde{f}_{[\ell ,\ell +\epsilon ]}\in s^{-}(R)
	\right )\to \mathbb{P}\left (\tilde{f}_{\ell }\in s^{-}(R)\right )
	\end{displaymath}
	as $\epsilon \to 0$. By inspecting the form of $p_{Z_{\ell }}$ it is clear
	that $\tilde{f}_{\ell }\xrightarrow{d}\tilde{f}_{\ell _{0}}$ as
	$\ell \rightarrow \ell _{0}$ in the $C^{2+\eta }_\text{loc}$ topology. So by
	applying the portmanteau lemma again, we see that
	$\mathbb{P}(\tilde{f}_{\ell }\in s^{-}(R))$ is continuous in $\ell $. Hence
	the function
	\begin{equation}
	\label{e:Differentiability equivalent}
	p_{s^{-}}^{(R)}(\ell ):=p_{s}(\ell )\mathbb{P}\left (\tilde{f}_{\ell
	}\in s^{-}(R)\right ),
	\end{equation}
	is continuous in $\ell $. Now note that
	\begin{displaymath}
	\frac{1}{\epsilon }\mathbb{E}\left (N_{s^{-}}^{(R)}[\ell ,\ell +
	\epsilon ]\right )=
	\frac{\mathbb{E}\left (N_{s}[\ell ,\ell +\epsilon ]\right )}{\epsilon }
	\frac{\mathbb{E}\left (N_{s^{-}}^{(R)}[\ell ,\ell +\epsilon ]\right )}{\mathbb{E}(N_{s}[\ell ,\ell +\epsilon ])}
	\to p_{s^{-}}^{(R)}(\ell )
	\end{displaymath}
	as $\epsilon \to 0$ (by Proposition~\ref{p:density existence}). Hence
	$\mathbb{E}(N_{s^{-}}^{(R)}[-\infty ,\ell ])$ is differentiable in
	$\ell $ with derivative $p_{s^{-}}^{(R)}(\ell )$. We now allow $R$ to vary;
	since $s^{-}(R)$ is non-decreasing in $R$ and
	$\cup _{R>0}s^{-}(R)=s^{-}$, taking the limit of \eqref{e:Differentiability equivalent} shows that
	\begin{displaymath}
	p_{s^{-}}^{*}(\ell )=\lim _{R\to \infty }p_{s^{-}}^{(R)}(\ell )
	\end{displaymath}
	for each $\ell \in \mathbb{R}$. Hence $p_{s^{-}}^{*}$ is indeed a pointwise
	supremum of continuous functions, and so is lower semi-continuous.
	
	We next prove that $p_{s^{-}}^{*}=p_{s^{-}}$ almost everywhere. Let
	$a<b$, then since $\lvert p_{s^{-}}^{*}-p_{s^{-}}\rvert $ is bounded, by
	dominated convergence
	\begin{align*}
	\int _{a}^{b}p_{s^{-}}(x)-p_{s^{-}}^{*}(x)\;dx&=\lim _{R\to \infty }
	\int _{a}^{b}p_{s^{-}}(x)-p_{s^{-}}^{(R)}(x)\;dx
	\\
	&=\lim _{R\to \infty }\mathbb{E}\left (N_{s^{-}}[a,b]-N_{s^{-}}^{(R)}[a,b]
	\right )=0,
	\end{align*}
	where in the last line we have used the definition of $p_{s^{-}}$, the
	fundamental theorem of calculus applied to
	$\mathbb{E}(N_{s^{-}}^{(R)}[-\infty ,\ell ])$ (along with the differentiability
	proven above), and then dominated convergence once again. Since $a$ and
	$b$ are arbitrary, we conclude that $p_{s^{-}}^{*}=p_{s^{-}}$ almost everywhere.
	
	To finish the proof we show that \eqref{e:fourarm}, the condition that
	$\tilde{f}_{\ell }$ does not have an infinite four-arm saddle, implies the
	continuity of $p_{s^{-}}^{*}$. Observe that, by repeating the arguments
	above, we may define the lower semi-continuous function
	\begin{displaymath}
	p_{s^{+}}^{*}(\ell ):=p_{s}(\ell )\mathbb{P} \left (\tilde{f}_{\ell
	}\in s^{+} \right )
	\end{displaymath}
	which is a version of $p_{s^{+}}$. By Lemmas~\ref{l:trichotomy conditional field}
	and~\ref{l:no four arm saddles conditional field}, the saddle point of
	$\tilde{f}_{\ell }$ at the origin must be either upper connected, lower connected
	or an infinite four-arm saddle. Therefore
	\begin{equation}
	\label{e:Differentiability equivalent 1}
	1-\mathbb{P} \left (\tilde{f}_{\ell
	}\text{ has an infinite four-arm saddle} \right )=\mathbb{P} \left (
	\tilde{f}_{\ell }\in s^{+}\cup s^{-}\right )=
	\frac{p_{s^{+}}^{*}(\ell )}{p_{s}(\ell )}+
	\frac{p_{s^{-}}^{*}(\ell )}{p_{s}(\ell )}.
	\end{equation}
	(Note that $p_{s}(\ell )>0$ by Lemma~\ref{a:nondegen3}.) Now suppose that \eqref{e:fourarm} holds, that is, for all $\ell \in (a,b)$,
	$\tilde{f}_{\ell }$ almost surely does not have an infinite four-arm saddle
	point at the origin. By \eqref{e:Differentiability equivalent 1} we see
	that $p_{s^{+}}^{*}(\ell )=p_{s}(\ell )-p_{s^{-}}^{*}(\ell )$ for all
	$\ell \in (a,b)$. Since $p_{s^{+}}^{*}$ is lower semi-continuous (and
	$p_{s}$ is continuous), we deduce that $p_{s^{-}}^{*}$ is upper semi-continuous
	on $(a,b)$. Hence we have shown that $p_{s^{-}}^{*}$ is both upper and
	lower semi-continuous on $(a,b)$, which completes the result.
\end{proof}

As mentioned previously, Theorem~\ref{t:differentiability of c_{LS}} follows
from Theorem~\ref{t:Differentiability equivalence} once we verify condition~\eqref{e:fourarm}.
This is done in the next lemma:

\begin{lemma}%
	\label{l:conditional one arm decay}
	Let $f$ be a Gaussian field satisfying Assumptions~\ref{a:minimal} and~\ref{a:regularity}--\ref{a:Arm decay}.
	Then for every $\ell \geq 0$ and $r>0$,
	\begin{equation}
	\label{e:arm lemma}
	\mathbb{P} \left (\tilde{f}_{\ell }\in \mathrm{Arm}_{\ell }(r,R) \right )
	\to 0
	\end{equation}
	as $R\to \infty $. In particular, for all $\ell \in \mathbb{R}$,
	$\tilde{f}_{\ell }$ almost surely does not have an infinite four-arm saddle
	point at the origin.
\end{lemma}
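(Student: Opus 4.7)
The plan is to couple $\tilde{f}_\ell$ with the unconditioned field $f$ on an annulus far from the origin so that their discrepancy is uniformly small, and then to transfer any far-reaching arm of $\tilde{f}_\ell$ at level $\ell$ to an arm of $f$, where Assumption~\ref{a:Arm decay} applies. As a first reduction I note that any connected component intersecting both $\partial B(r)$ and $\partial B(R)$ necessarily crosses $\partial B(r_0)$ for every $r \leq r_0 \leq R$, so $\mathrm{Arm}_\ell(r, R) \subseteq \mathrm{Arm}_\ell(r_0, R)$, and it suffices to prove the statement with $r$ replaced by an arbitrarily large $r_0$ chosen depending on $\ell$ but independently of $R$.

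The key step is to invoke Gaussian regression to decompose
\[f = g + f(0)\alpha + \nabla f(0) \cdot \beta^{\nabla} + \nabla^2 f(0) \cdot \beta,\]
where $\beta^{\nabla}$ is the regression coefficient of $f(t)$ onto $\nabla f(0)$ (using that $\nabla f(0)$ is independent of $(f(0), \nabla^2 f(0))$ by stationarity and constant variance), and $g$ is the centred Gaussian field of covariance $\gamma$ from Lemma~\ref{l:conditional distribution}, independent of the derivatives of $f$ at the origin. Combined with the representation $\tilde{f}_\ell = g + \ell\alpha + Z_\ell \cdot \beta$, coupling the two fields to share the same $g$ yields $\tilde{f}_\ell - f = (\ell - f(0))\alpha + (Z_\ell - \nabla^2 f(0)) \cdot \beta - \nabla f(0) \cdot \beta^{\nabla}$. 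Since $\alpha, \beta, \beta^{\nabla}$ are assembled from $K$ and its derivatives up to order three, Assumption~\ref{a:regularity} forces them to decay polynomially, so $\sup_{|t| \geq r_0}(|\alpha(t)| + |\beta(t)| + |\beta^{\nabla}(t)|) \to 0$ as $r_0 \to \infty$. Restricting to the event that $(f(0), \nabla f(0), \nabla^2 f(0), Z_\ell)$ has norm at most $M$ --- of probability $\geq 1 - \eta$ for $M$ large --- I can arrange $\|\tilde{f}_\ell - f\|_{L^\infty(B(R) \setminus B(r_0))} \leq \epsilon$ off an event of probability $\eta$, for any preassigned $\epsilon, \eta > 0$.

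On this good event, any arm of $\{\tilde{f}_\ell \geq \ell\}$ crossing $B(R) \setminus B(r_0)$ yields an arm of $\{f \geq \ell - \epsilon\}$ there. For $\ell > 0$ I choose $\epsilon < \ell$ and exploit the inclusion $\mathrm{Arm}_{\ell - \epsilon}(r_0, R) \subseteq \mathrm{Arm}_0(r_0, R)$ (excursion sets are nested in the level), so Assumption~\ref{a:Arm decay} gives the bound $c_1 (r_0/R)^{c_2}$, vanishing as $R \to \infty$; sending $\epsilon, \eta \to 0$ closes that case. The main obstacle is the critical case $\ell = 0$: the naive coupling only delivers the event $\{f \in \mathrm{Arm}_{-\epsilon}(r_0, R)\}$, and arm probabilities of $f$ at strictly negative levels are expected not to decay in the percolation universality class. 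Overcoming this requires a finer argument, either by letting $\epsilon = \epsilon_R \to 0$ at a quantitative rate matched to the polynomial decay of $\alpha, \beta, \beta^{\nabla}$ from Assumption~\ref{a:regularity} together with a continuity estimate on the arm probability near criticality, or by performing a Cameron--Martin change of measure between the laws of $g$ and $f$ on the annulus (whose covariances $\gamma$ and $K$ differ by a finite-rank perturbation and are therefore equivalent by Feldman--H\'ajek), then combining the resulting $L^p$-controlled Radon--Nikodym derivative with the level-zero arm bound via H\"older's inequality.

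For the ``in particular'' statement, the distributional symmetry $\tilde{f}_{-\ell} \stackrel{d}{=} -\tilde{f}_\ell$ (a consequence of $-f \stackrel{d}{=} f$) shows that the $\{\leq \ell\}$-arms of $\tilde{f}_\ell$ have the same probability as the $\{\geq -\ell\}$-arms of $\tilde{f}_{-\ell}$. An infinite four-arm saddle at the origin demands both $\{\geq \ell\}$- and $\{\leq \ell\}$-arms reaching infinity, so for $\ell \geq 0$ the first part directly rules out the $\{\geq \ell\}$-arm, while for $\ell < 0$ the first part applied to $\tilde{f}_{-\ell}$ at level $-\ell > 0$ rules out the $\{\leq \ell\}$-arm; in either case the probability of an infinite four-arm saddle at the origin is zero for every $\ell \in \mathbb{R}$.
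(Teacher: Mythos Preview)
Your strategy---couple $\tilde f_\ell$ to $f$ via their shared residual $g$, use the polynomial decay of $\alpha,\beta,\beta^{\nabla}$ from Assumption~\ref{a:regularity} to make the discrepancy small on a far annulus, and then feed an arm of $\tilde f_\ell$ back to an arm of $f$---is exactly the paper's approach, and your handling of the ``in particular'' clause via the symmetry $\tilde f_{-\ell}\stackrel{d}{=}-\tilde f_\ell$ is the same as well. For $\ell>0$ your shortcut (take $\epsilon<\ell$ and use $\mathrm{Arm}_{\ell-\epsilon}\subseteq\mathrm{Arm}_0$) is valid and actually simpler than what the paper writes.

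The genuine gap is the case $\ell=0$, which you correctly flag but do not resolve. Your option~(a) is precisely what the paper does, but it requires a specific quantitative input: \cite[Corollary~3.7]{Muirhead2018sharp} states that under the spectral-density hypothesis in Assumption~\ref{a:regularity}, for any increasing event (such as $\mathrm{Arm}_\ell$) and any coupled field $F$ with $\mathbb{P}(\|f-F\|_{\infty,A(r,R)}\ge\epsilon)\le\delta$, one has
\[
\mathbb{P}(F\in\mathrm{Arm}_\ell(r,R))\le \mathbb{P}(f\in\mathrm{Arm}_\ell(r,R))+\delta+c_3R\epsilon.
\]
The $c_3R\epsilon$ term is the Cameron--Martin cost of shifting the level of $f$ by $\epsilon$, and it is exactly the ``continuity estimate near criticality'' you allude to. One must then let $r=r_R\to\infty$ (not fixed, contrary to your first reduction) so that the decay of $\alpha,\beta$ allows $\epsilon=\epsilon_R\to0$; the paper takes $r=R/\log R$ and $\epsilon=1/(R\log R)$, so that $R\epsilon\to0$ and $r/R\to0$, and also checks via Borell--TIS that the Gaussian perturbation $f_1=f-g$ stays below $\epsilon/2$ on $A(r,R)$ with probability tending to one. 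Your option~(b) (Feldman--H\'ajek between the laws of $g$ and $f$) is a different route and faces the difficulty that the $L^p$ norm of the Radon--Nikodym derivative on $A(r,R)$ must be controlled uniformly as $R\to\infty$; this is not addressed and does not obviously follow from the finite-rank structure alone.
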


\begin{proof}
	We first note that if $\tilde{f}_{\ell }$ has an infinite four-arm saddle
	at the origin, then both $\{\tilde{f}_{\ell }\geq \ell \}$ and
	$\{\tilde{f}_{\ell }\leq \ell \}=\{-\tilde{f}_{\ell }\geq -\ell \}$ have unbounded
	components containing the origin. Then, since $\tilde{f}_{\ell }$ and
	$-\tilde{f}_{-\ell }$ have the same distribution by Lemma~\ref{l:conditional distribution},
	the second claim of this lemma follows from the first.
	
	Since the event $\mathrm{Arm}_{\ell }(r,R)$ is weakly increasing in $r$, it
	is sufficient to prove \eqref{e:arm lemma} for a sequence
	$r_{R}\to \infty $ as $R\to \infty $. This allows us to make use of the
	fact that, far from the origin, the distribution of
	$\tilde{f}_{\ell }$ is close to that of $f$.
	
	By Assumption~\ref{a:regularity} and Lemma~\ref{a:nondegen2} we know that
	$(f(0),\nabla ^{2} f(0))$ is non-degenerate. Recall the representation
	for $\tilde{f}_{\ell }$ in Lemma~\ref{l:conditional distribution}
	\begin{align*}
	\tilde{f}_{\ell }&=g+\ell \alpha +Z_{\ell }\cdot \beta ,
	\end{align*}
	and recall also the explicit expressions for $\alpha , \beta $ and the
	covariance of $g$ derived after this lemma. Since this covariance is expressed
	as the difference of two positive definite functions, if we let
	$f_{1}$ be a centred Gaussian field with covariance
	\begin{displaymath}
	K_{1}(s,t)=\text{Cov}\left (f(s),{v}\right )\Sigma ^{-1}\text{Cov}
	\left (f(t),{v}\right )^{\prime },
	\end{displaymath}
	then we can decompose $f = g+ f_{1}$, where $f_{1}$ and $g$ are independent.
	Since $K_{1}$ can be expressed as a linear combination of
	$\partial ^{k_{1}}K(s) \partial ^{k_{2}}K(t)$, for
	$|k_{1}|,|k_{2}| \le 2$, by Assumption~\ref{a:regularity} there exists
	$c_{1},\nu >0$ such that, for all $r > 1$,
	\begin{equation}
	\label{e:armdecay1}
	\sup _{s,t \notin B(r)} \sup _{|k| \le 2} \left \lvert \partial ^{k} K_{1}(s,
	t) \right \rvert \leq c_{1} r^{-2(1+\nu )}.
	\end{equation}
	Moreover, since $\alpha , \beta $ can be expressed as a linear combination
	of $\partial ^{k}K(t)$, for $|k| \le 2$, by Assumption~\ref{a:regularity}
	there exists $c_{2},\nu >0$ such that
	\begin{equation}
	\label{e:armdecay2}
	\sup _{\lvert t\rvert >r}\lvert \alpha (t)\rvert \leq c_{2} r^{-(1+
		\nu )}\quad \text{and}\quad \sup _{\lvert t\rvert >r}\|\beta (t)\|_{\infty }\leq c_{2}r^{-(1+\nu )}.
	\end{equation}
	
	Next, we fix $\ell \geq 0$ and apply a Cameron-Martin argument to the unconditional
	field $f$. Specifically, by \cite[Corollary~3.7]{Muirhead2018sharp} (valid
	by the condition on the spectral density in Assumption~\ref{a:regularity},
	and since $\text{Arm}_{\ell }(r,R)$ is an increasing event with respect to
	the field) there exists $c_{3},r_{0}>0$ such that for all $r>r_{0}$ the
	following holds: if $F:\mathbb{R}^{2}\to \mathbb{R}$ is a continuous
	random field coupled with $f$ such that
	\begin{displaymath}
	\mathbb{P}\left (\|f-F\|_{\infty ,A(r,R)}\geq \epsilon \right )\leq
	\delta
	\end{displaymath}
	where $\|\cdot \|_{\infty ,A(r,R)}$ denotes the supremum norm on
	$A(r,R)$ the centred annulus of inner radius $r$ and outer radius
	$R$, then
	\begin{equation}
	\label{e:Conditional one arm}
	\mathbb{P}\left (F\in \text{Arm}_{\ell }(r,R)\right )\leq \mathbb{P}
	\left (f\in \text{Arm}_{\ell }(r,R)\right )+\delta +c_{3}R\epsilon .
	\end{equation}
	We will apply this bound to $F = \tilde{f}_{\ell }$. Note that, by the union
	bound,
	\begin{equation}
	\label{e:conditional one arm 1}
	\begin{aligned}
	\mathbb{P}&\left (\left \|  f-\tilde{f}_{\ell }\right \|  _{\infty ,A(r,R)}
	\geq \epsilon \right )
	\\
	&\quad \quad \quad \quad \quad \quad \leq \mathbb{P}\left (\|\ell
	\alpha +Z_{\ell }\cdot \beta \|_{\infty ,A(r,R)}\geq \epsilon /2\right )+
	\mathbb{P}\left (\|f_{1}\|_{\infty ,A(r,R)}\geq \epsilon /2\right )
	\\
	&\quad \quad \quad \quad \quad \quad \leq
	\mathds{1}\left \{  \ell \|\alpha \|_{\infty ,A(r,R)}\ge \epsilon /8
	\right \}  +\sum _{i\in \{11,12,22\}}\mathbb{P}\left (\lvert Z_{\ell ,i}
	\rvert \|\beta _{i}\|_{\infty ,A(r,R)}\geq \epsilon /8\right )
	\\
	&\quad \quad \quad \quad \quad \quad \quad +\mathbb{P}\left (\|f_{1}
	\|_{\infty ,A(r,R)}\geq \epsilon /2\right )
	\end{aligned}
	\end{equation}
	where $Z_{\ell ,i}$ denotes the elements of the random vector
	$Z_{\ell }$. We now show that, for a suitable choice of
	$r = r_{R} \to \infty $ and $\epsilon = \epsilon _{R} \to 0$, the three
	terms in \eqref{e:conditional one arm 1} all decay to zero as
	$R \to \infty $.
	
	By \eqref{e:armdecay2}, and since $Z_{\ell ,i}$ is almost surely finite,
	the first two terms in~\eqref{e:conditional one arm 1} converge to zero
	as long as $\epsilon r^{1+\nu }\to \infty $. If we assume this convergence
	is sufficiently fast (to be specified below) then it is a standard estimate
	for the norm of a Gaussian field that the third term of \eqref{e:conditional one arm 1} also converges to zero. This argument is
	essentially the same as \cite[Lemma 3.12]{Muirhead2018sharp}, but our setting
	is slightly different so we give a complete proof.
	
	Let $B_{x}(1)$ denote the ball of radius $1$ centred at $x$. Covering
	$A(r, R)$ with $O(R^{2})$ unit balls, and by the union bound,
	\begin{equation*}
	\mathbb{P}\left (\|f_{1}\|_{\infty ,A(r,R)}\geq \epsilon /2\right )
	\le c_{3} R^{2} \sup _{x \in A(r, R)} \mathbb{P}\left (\|f_{1}\|_{
		\infty ,B_{x}(1))}\geq \epsilon /2\right ) .
	\end{equation*}
	By the Borell--TIS inequality (\cite[Theorem 2.1.1]{RFG}), for all
	$u > 0$,
	\begin{equation*}
	\mathbb{P}\left (\|f_{1}\|_{\infty ,B_{x}(1))} \ge m_{x} + u \right )
	\le 2e^{-u^{2} / (2 \sigma _{x}^{2}) },
	\end{equation*}
	where
	\begin{equation*}
	m_{x} = \mathbb{E}[ \| f_{1} \|_{\infty ,B_{x}(1)} ] \quad \text{and}
	\quad \sigma _{x}^{2} = \sup _{y \in B_{x}(1)} K_{1}(y,y) .
	\end{equation*}
	By Kolmogorov's theorem
	\cite[Appendix~A.9]{SodinNazarov2015asymptotic}, there is a
	$c_{4} > 0$ such that
	\begin{equation*}
	m_{x} < c_{4} \sup _{s,t \in B_{x}(1)} \sup _{|\alpha _{1}|, |\alpha _{2}|
		\le 1} \left ( \partial ^{\alpha _{1}, \alpha _{2}} K_{1}(s, t)
	\right )^{1/2} .
	\end{equation*}
	Therefore, by \eqref{e:armdecay1},
	\begin{equation*}
	\sup _{x \in A(r, R)} m_{x} < c_{5} r^{-1-\nu } \quad \text{and} \sup _{x
		\in A(r, R)} \sigma _{x}^{2} < c_{5} r^{-2-2\nu }.
	\end{equation*}
	Taking $u=\epsilon /4$ and assuming that
	$\epsilon /4>c_{5}r^{-1-\nu }$ we have
	\begin{equation*}
	\mathbb{P}\left (\|f_{1}\|_{\infty ,A(r,R)}\geq \epsilon /2\right )
	\le 2 c_{6} R^{2} \exp ({-c_{7} \epsilon ^{2}r^{2+2\nu }}).
	\end{equation*}
	To finish, we take
	\begin{displaymath}
	r=\frac{R}{\log (R)}\quad \text{and} \quad \epsilon =
	\frac{1}{R\log (R)}
	\end{displaymath}
	and observe that for this choice the right hand side of the estimate above
	converges to $0$ as $R\to \infty $. Combining all of these estimates together
	we have that the right hand side of \eqref{e:conditional one arm 1} tends
	to zero as $R\to \infty $.
	
	Substituting into \eqref{e:Conditional one arm}, and noting that
	$r/R\to 0$ and $R\epsilon \to 0$ as $R\to \infty $, proves that
	$\mathbb{P}(\tilde{f}_{\ell }\in \text{Arm}_{\ell }(r,R))$ can be made arbitrarily
	small, which completes the proof of the lemma.
\end{proof}

\begin{proof}[Proof of Theorem~\ref{t:differentiability of c_{LS}}]
	This is immediate from Theorem~\ref{t:Differentiability equivalence} and
	Lemma~\ref{l:conditional one arm decay}.
\end{proof}

To end the section we prove the remaining results stated in Section~\ref{ss:Differentiability},
namely Corollary~\ref{c:Four arm} and Proposition~\ref{p:positivity of c_{LS}}.

\begin{proof}[Proof of Corollary~\ref{c:Four arm}]
	By \cite[Lemmas~2.4 and~4.5]{Beliaev2018Number},
	$\mathbb{E}(N_{\mathrm{4\mhyphen arm}}(R))=O(R)$ as $R\to \infty $, so it suffices
	to prove the other bound here. Recall that $A(R-r,R)$ denotes the annulus
	of inner radius $R-r$ and outer radius $R$. We first note that for any
	$1<r<R$
	\begin{align*}
	N_{\mathrm{4\mhyphen arm}}(R,[a_{R},b_{R}])\leq & N_{\mathrm{c}}\left (A\left (R-r,R
	\right ),[a_{R},b_{R}]\right )+N_{\mathrm{4\mhyphen arm},r}\left (B\left (R-r
	\right ),[a_{R},b_{R}]\right )
	\end{align*}
	where, by a slight abuse of notation,
	$N_{\mathrm{c}}\left (A\left (R-r,R\right ),[a_{R},b_{R}]\right )$ denotes
	the number of critical points in $A\left (R-r,R\right )$ which have level
	in $[a_{R}, b_{R}]$, and
	$N_{\mathrm{4\mhyphen arm},r}(B(R-r),[a_{R},b_{R}])$ denotes the number of saddle
	points $t\in B(R-r)$ which are four-arm in $B(t,r)$ and have level in
	$[a_{R},b_{R}]$. Using the Kac-Rice theorem (\cite[Corollary 11.2.2]{RFG})
	and the independence of $(f(0),\nabla ^{2}f(0))$ and $\nabla f(0)$
	\begin{equation}
	\label{e:Four arm 1}
	\begin{aligned}
	&\mathbb{E}\left (N_{c}(A(R-r,R) ,[a_{R},b_{R}])\right )
	\\
	& \qquad \qquad =\int _{A(R-r,R)}\mathbb{E}\left (\left \lvert \det
	\left (\nabla ^{2} f(0)\right )\right \rvert \mathds{1}_{f(0)\in [a_{R},b_{R}]}
	\middle |\nabla f(0)={0}\right )p_{\nabla f(0)}(0)\;dt
	\\
	&\qquad \qquad =c_{1}\left (R^{2}-(R-r)^{2}\right )\int _{a_{R}}^{b_{R}}
	\mathbb{E}\left (\left \lvert \det \left (\nabla ^{2} f(0)\right )
	\right \rvert \middle |f(0)=x\right )p_{f(0)}(x)\;dx
	\\
	&\qquad \qquad \leq c_{2} Rr\cdot (b_{R}-a_{R})
	\end{aligned}
	\end{equation}
	for some $c_{1},c_{2}>0$ independent of $R$. By stationarity of $f$
	\begin{equation}
	\label{e:Four arm 2}
	\begin{aligned}
	\mathbb{E}(N_{\mathrm{4\mhyphen arm},r}(B(R-r),[a_{R},b_{R}]))&\leq R^{2}\;
	\mathbb{E}\left (N_{\mathrm{4\mhyphen arm},r}(B(1),[a_{R},b_{R}])\right )
	\\
	&=\pi R^{2}\;\int _{a_{R}}^{b_{R}}p_{s}(x)-p_{s^{-}}^{(r)}(x)-p_{s^{+}}^{(r)}(x)
	\;dx
	\end{aligned}
	\end{equation}
	where $p_{s^{-}}^{(r)}$ and $p_{s^{+}}^{(r)}$ are the continuous functions
	defined as in the proof of Theorem~\ref{t:Differentiability equivalence}.
	In this proof it is shown that as $r\to \infty $,
	$p_{s^{-}}^{(r)}+p_{s^{+}}^{(r)}$ converges pointwise monotonically to
	$p_{s^{-}}^{*}+p_{s^{+}}^{*}=p_{s}$ which is continuous. Therefore by Dini's
	theorem this convergence is uniform on $[a,b]$ and so for any
	$\epsilon >0$ taking $r$ sufficiently large relative to $\epsilon $ ensures
	that the right hand side of \eqref{e:Four arm 2} is bounded above by
	$\epsilon R^{2}(b_{R}-a_{R})$. If we choose $r$ depending on $R$ such that
	$r\to \infty $ but $r/R\to 0$ as $R\to \infty $, then combining \eqref{e:Four arm 1} and \eqref{e:Four arm 2} proves the corollary.
\end{proof}

\begin{proof}[Proof of Proposition~\ref{p:positivity of c_{LS}}]
	By Theorem~\ref{t:integral equality} and the identities in Proposition~\ref{p:density existence},
	\begin{equation}
	\label{e:pos2}
	c_{LS}(\ell ) = c_{ES}(\ell )+c_{ES}(-\ell ) .
	\end{equation}
	Let us also consider the function
	\begin{equation}
	\label{e:pos}
	h(\ell ) =c_{ES}(\ell )-c_{ES}(-\ell ).
	\end{equation}
	In \cite[Corollary~1.12]{Beliaev2018Number}, this is interpreted as the
	asymptotic mean Euler characteristic of the excursion set at level
	$\ell $, and hence shown via explicit calculation to be equal to the
	$C^{1}$ function
	\begin{equation*}
	h(\ell ) =\sqrt{\det \nabla ^{2}K(0)}\frac{\ell }{(2\pi )^{3/2}}e^{-
		\ell ^{2}/2} .
	\end{equation*}
	If $c_{LS}(0) = 0$, then it follows from \eqref{e:pos2} that
	$c_{ES}(0)= 0$. Similarly, if $c_{LS}$ is differentiable at $0$, then by \eqref{e:pos2} and the differentiability of $h$, $c_{ES}$ is also differentiable
	at $0$.
	
	It remains to show that if $c_{ES}$ is differentiable at $0$ then
	$c_{ES}(0)\neq 0$. Suppose for the sake of contradiction that
	$c_{ES}(0) = 0$. Then by the non-negativity of $c_{ES}$, we have
	$c_{ES}'(0) = 0$. Hence, by \eqref{e:pos}, $h'(0)=0$. Since $h$ has critical
	points only at $\ell = \pm 1$, we have derived the necessary contradiction.
\end{proof}

\begin{remark}
	Assuming differentiability of $c_{ES}$ or $c_{LS}$ at $\ell $, the above
	argument actually shows that $c_{LS}(\ell )>0$ for all
	$\ell \neq \pm 1$ (although it apparently says nothing about the positivity
	of $c_{ES}(\ell )$ for $\ell \neq 0$).
\end{remark}

\section{Monotonicity results}
\label{s:Monotonicity}

In this section we prove the monotonicity results stated in Section~\ref{ss:Monotonicity}.
The main intermediate step is to show that the finite-dimensional projections
of $\tilde{f}_{\ell }-\ell $ are stochastically decreasing in $\ell $, which
we do in the next subsection.

\subsection{Stochastic monotonicity}

Our analysis differs depending on whether we deal with the RPW or a general
isotropic field satisfying Assumption~\ref{a:Monotonicity}, the RPW case
being somewhat simpler.

\subsubsection{Stochastic monotonicity for the RPW}

Let $f$ be the RPW. The first step is to show, via explicit calculation,
that $\tilde{f}_{\ell }-\ell $ is stochastically decreasing in $\ell $ at
every point.

By Proposition~\ref{p:ftildeRPW}, $\tilde{f}_{\ell }$ has the distribution
\begin{equation}
\label{e:rpwrep}
\tilde{f}_{\ell }(t) = g(t) + \ell \cdot \left [J_{0}(\lvert t\rvert )+2
\cos (2(\theta -\arg t))J_{2}(\lvert t\rvert )\right ]+\lambda \cdot 4
\cos (2(\theta -\arg t))J_{2}(\lvert t\rvert )
\end{equation}
for the random vector $(\theta , \lambda )$ defined in that proposition.
To simplify notation, we define
\begin{align*}
a&:=a(t,\theta )=1-J_{0}(\lvert t\rvert )-2\cos (2(\theta -\arg t))J_{2}(
\lvert t\rvert )
\\
b&:=b(t,\theta )=4\cos (2(\theta -\arg t))J_{2}(\lvert t\rvert ).
\end{align*}
The key fact leading to stochastic monotonicity is that, by Lemma~\ref{l:bessel}
below, $a(t,\theta ) \ge 0$ for all $t$ and $\theta $. This is equivalent
to the statement that for all $t\in \mathbb{R}^{2}$
\begin{displaymath}
\alpha (t)=\mathbb{E}\left (f(t) \, \middle | \, f(0)=1,f_{11}(0)=f_{12}(0)=0
\right )\leq 1.
\end{displaymath}
For general isotropic fields, we show in Lemma~\ref{l:genmonassump} that
Assumption~\ref{a:Monotonicity} implies $\alpha (t)\leq 1$ (recall that
$\alpha $ has a slightly different definition in the general case, see
Lemma~\ref{l:conditional distribution}).

\begin{lemma}%
	\label{l:bessel}
	For all $t \in \mathbb{R}^{2}$ and $\theta \in \mathbb{R}$,
	\begin{equation*}
	a = a(t, \theta ) = 1-J_{0}(\lvert t\rvert )-2 \cos (2 (\theta -\arg t))
	J_{2}(\lvert t\rvert ) \ge 0 .
	\end{equation*}
\end{lemma}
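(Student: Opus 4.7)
The plan is to reduce the claim to a one-variable Bessel function inequality and then invoke the classical positivity of integrals of Bessel functions of the first kind. Since $\cos(2(\theta-\arg t))$ takes every value in $[-1, 1]$ as $\theta$ varies, the inequality $a(t,\theta) \geq 0$ for all $t$ and $\theta$ is equivalent to the two scalar inequalities
\[
J_0(r) + 2J_2(r) \leq 1 \quad \text{and} \quad J_0(r) - 2J_2(r) \leq 1 \qquad (r \geq 0),
\]
where $r = |t|$.

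Next, I would apply the standard Bessel recurrence $2J_n'(r) = J_{n-1}(r) - J_{n+1}(r)$ to compute $(J_0 + 2J_2)'(r) = -J_1(r) + (J_1(r) - J_3(r)) = -J_3(r)$. Combined with $(J_0+2J_2)(0) = 1$, integration yields the key identity
\[
1 - J_0(r) - 2J_2(r) = \int_0^r J_3(s)\,ds,
\]
and the parallel computation (or subtracting from the identity $2\int_0^r J_1 = 2(1-J_0)$) gives $1 - J_0(r) + 2J_2(r) = \int_0^r (2J_1(s) - J_3(s))\,ds$. The lemma thus reduces to showing both integrals are non-negative for every $r \geq 0$.

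The non-negativity of $\int_0^r J_n(s)\,ds$ for $n \geq 0$ and $r \geq 0$ is a classical positivity result; I would cite it from Watson's treatise, or prove it directly by noting that $J_n$ oscillates with strictly decreasing half-wave amplitudes (a consequence of the Mehler--Sonine integral representation or the monotonicity of the modulus $M_n(r) = \sqrt{J_n^2(r) + Y_n^2(r)}$), so that the partial integral is always squeezed between $0$ and $\int_0^{j_{n,1}}J_n > 0$. Near the origin positivity is transparent, as $1 - J_0(r) - 2J_2(r) = r^4/192 + O(r^6)$ and $J_3 > 0$ on $(0, j_{3,1})$ with $j_{3,1} \approx 6.38$; the hard part will therefore be in the regime $r > j_{3,1}$, where the amplitude-decay estimate on the successive half-waves of $J_3$ must be invoked to ensure that the negative half-wave contributions cannot overwhelm the leading positive hump. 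The second inequality is handled identically, with the additional helpful observation that $\int_0^r J_1 = 1 - J_0 \geq 0$ contributes a uniform positive term.
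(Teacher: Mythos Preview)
Your reduction to the two scalar inequalities and the identity $1 - J_0(r) - 2J_2(r) = \int_0^r J_3(s)\,ds$ are both correct, and for the first inequality your appeal to the classical positivity $\int_0^r J_\nu(s)\,ds \geq 0$ for $\nu \geq 0$ is a clean argument. This is genuinely different from the paper's route, which establishes the first inequality by combining Olenko's uniform bound on $\sqrt{s}\,|J_n(s)|$ with the identity $J_0 + J_2 = 2J_1/s$ to cover $s > 4$, and then uses $J_3 \geq 0$ on $[0,4]$ for the rest. Your argument is shorter here, and indeed the paper remarks that a more conceptual proof ought to exist.

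The gap is the second inequality. The integrand $2J_1 - J_3$ is not a single $J_n$, so the classical positivity result does not apply, and your sentence ``handled identically, with the additional helpful observation that $\int_0^r J_1 = 1 - J_0 \geq 0$ contributes a uniform positive term'' is not a proof: writing $\int_0^r (2J_1 - J_3)\,ds = 2(1 - J_0(r)) - \int_0^r J_3(s)\,ds$, both terms on the right are non-negative by what you have already shown, so their difference has no a priori sign. Adapting the decreasing-half-wave argument to $2J_1 - J_3$ is not ``identical'' either: asymptotically $2J_1 - J_3 \sim 3J_1$, but in the transitional range you would have to control the interference between the $J_1$ and $J_3$ oscillations to justify the alternating-series reasoning, and you have not indicated how. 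The paper does not attempt anything of that sort; it handles the second inequality by patchwork: Olenko's bound for $s \geq 11$, the trivial facts $|J_0| \leq 1$ and $J_2 \geq 0$ on $[0,5] \cup [9,11]$, and on the leftover interval $[5,9]$ a Lipschitz bound $|2J_1 - J_3| < 2$ that reduces the verification to evaluating $1 - J_0 + 2J_2$ at $101$ grid points. As written, your plan leaves the second inequality unestablished in the intermediate range.
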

\begin{proof}
	It is sufficient to prove that, for $s \ge 0$,
	\begin{equation*}
	1-J_{0}(s)-2J_{2}(s) \ge 0 \quad \text{and} \quad 1-J_{0}(s)+2J_{2}(s)
	\ge 0 .
	\end{equation*}
	By the identity $2J_{1}(s)/s=J_{0}(s)+J_{2}(s)$ and an explicit uniform
	bound on $\sqrt{s}\lvert J_{n}(s)\rvert $ given in
	\cite[Theorem 2.1]{olenko2006upper}, the first inequality holds for all
	$s>4$. Hence, since $1-J_{0}(0)-2J_{2}(0)=0$ and
	$\frac{d}{ds}(1-J_{0}(s)-2J_{2}(s))=J_{3}(s)$ (which is non-negative for
	$s\in [0,4]$), the first inequality holds for all $s\geq 0$.
	
	The same bound from \cite{olenko2006upper} shows that the second inequality
	holds for $s\geq 11$. Since $\lvert J_{0}\rvert \leq 1$ everywhere and
	$J_{2}(s)\geq 0$ for $s\in [0,5]\cup [9,11]$, the inequality also holds
	on these intervals. We verify the second inequality on the remaining compact
	set $[5,9]$ by inspection. More precisely, since
	\begin{equation*}
	\left \lvert \frac{d}{ds}(1-J_{0}(s)+2J_{2}(s)) \right \rvert = | 2J_{1}(s)
	- J_{3}(s) | \le 2|J_{1}(s)| + |J_{3}(s)| < 2 ,
	\end{equation*}
	it suffices to check that $1-J_{0}(s)+2J_{2}(s) > 0.08$ for all
	$s \in \{5 + 4i/100 : i = 0, 1, \ldots , 100\}$.
\end{proof}

\begin{remark}
	We prove the above lemma by somewhat explicit computations. We believe
	that there might be a more conceptual proof of this statement.
\end{remark}

We shall actually show the slightly stronger statement that
$\tilde{f}_{\ell }-\ell $ is pointwise stochastically decreasing conditional
on all values of $(g, \theta )$:

\begin{lemma}%
	\label{l:RPW pointwise stochastic decreasing}
	Let $f$ be the RPW. For $t\in \mathbb{R}^{2}$ and
	$c\in \mathbb{R}$
	\begin{displaymath}
	\mathbb{P}\left (\tilde{f}_{\ell }(t)-\ell \leq c \middle | g, \theta
	\right )
	\end{displaymath}
	is non-decreasing in $\ell \in \mathbb{R}$.
\end{lemma}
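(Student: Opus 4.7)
The plan is to exploit the explicit representation of $\tilde{f}_{\ell}$ in Proposition~\ref{p:ftildeRPW}, together with a change of variables that decouples the eigenvalue statistics from $\ell$. First I would simplify the second representation in Proposition~\ref{p:ftildeRPW} by substituting $r_{\ell} := \lambda + \ell/2$, which absorbs all the $J_2$ terms into a single factor:
\begin{displaymath}
\tilde{f}_{\ell}(t) - \ell = g(t) - \ell A + r_{\ell} B, \quad A := 1 - J_0(|t|) \geq 0, \quad B := 4\cos(2(\theta - \arg t))J_2(|t|).
\end{displaymath}
Applying Lemma~\ref{l:bessel} at both $\theta$ and $\theta + \pi/2$ yields the crucial two-sided bound $|B| \leq 2A$. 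Since $r_{\ell}$ is independent of $(g, \theta)$, the conditional probability in question becomes $\mathbb{P}(r_{\ell} B \leq c - g(t) + \ell A)$, which must be shown non-decreasing in $\ell$.

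Next I would introduce the change of variables $V := r_{\ell}^2 - \ell^2/4$. From the density of $r_{\ell}$ implicit in Proposition~\ref{p:ftildeRPW}, $V$ has the \emph{$\ell$-independent} density proportional to $v e^{-4v}\mathds{1}_{v > 0}$, and in particular $r_{\ell} \overset{d}{=} r_{-\ell}$. The case $B = 0$ follows directly from $A \geq 0$, and the case $B < 0$ reduces to the case $B > 0$ via the substitution $\ell \to -\ell$ together with this symmetry. It therefore suffices to prove that, for every $c' \in \mathbb{R}$ and every $\mu \geq 1/2$,
\begin{displaymath}
\psi(\ell) := \mathbb{P}(r_{\ell} \leq c' + \ell \mu)
\end{displaymath}
is non-decreasing in $\ell$, where the bound $\mu := A/B \geq 1/2$ comes directly from $|B| \leq 2A$.

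The core of the argument is then to observe, via the $V$-parameterisation, that $\psi(\ell) = F_V((c' + \ell\mu)^2 - \ell^2/4)$ whenever $c' + \ell\mu \geq 0$ and this argument is non-negative, and $\psi(\ell) = 0$ otherwise. I would then use the factorisation
\begin{displaymath}
(c' + \ell\mu)^2 - \ell^2/4 = (c' + \ell(\mu - 1/2))(c' + \ell(\mu + 1/2)).
\end{displaymath}
For $\mu \geq 1/2$ both linear factors have non-negative slope; on $\{c' + \ell\mu \geq 0\}$ their sum is non-negative, so either both are non-negative (in which case the product, hence $\psi$, is non-decreasing in $\ell$) or they have opposite signs (in which case the product is non-positive, forcing $\psi = 0$). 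Continuity of $F_V$ at zero handles the boundary between these two regions, yielding monotonicity globally in $\ell$.

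The main obstacle I anticipate is finding the right parameterisation. Working directly with $\lambda$, or applying a naive quantile coupling $r_{\ell} = F_\ell^{-1}(U)$, leads one to try to bound $|d r_{\ell}/d\ell|$ by $1/2$ pointwise in the coupling, which fails when $V$ is small. The change of variable to $V = r_{\ell}^2 - \ell^2/4$ is what removes the $\ell$-dependence and turns the stochastic monotonicity statement into the deterministic factorisation above, and the sharp bound $\mu \geq 1/2$ supplied by Lemma~\ref{l:bessel} is exactly what makes both linear factors non-decreasing in $\ell$.
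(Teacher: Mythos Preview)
Your proof is correct and takes a genuinely different route from the paper's. The paper works directly with $\lambda=\lambda_\ell$ and its $\ell$-dependent density: it differentiates $\mathbb{P}(\lambda_\ell\le h(\ell))$ in $\ell$ to obtain the formula $p_\lambda(h)\bigl(h'(\ell)+h/(2h+\ell)\bigr)$, and then bounds the bracket case by case ($b>0$, $b<0$, $b=0$) using Lemma~\ref{l:bessel} at $\theta$ and at $\theta+\pi/2$. Your reparametrisation $r_\ell=\lambda+\ell/2$ and the observation that $V=r_\ell^2-\ell^2/4=\lambda(\lambda+\ell)$ has an $\ell$-free law is exactly the structure hidden behind the paper's derivative formula (the paper's ``integrating \eqref{e:Lambda density}'' step is implicitly the same substitution), but you then exploit it far more directly: the conditional CDF becomes $F_V$ evaluated at the product of two linear-in-$\ell$ factors, both with non-negative slope precisely because $|B|\le 2A$, so the stochastic monotonicity collapses to a deterministic factorisation rather than a derivative bound. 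The symmetry $r_\ell\overset{d}{=}r_{-\ell}$ that you use to fold the case $B<0$ into $B>0$ is a nice touch not present in the paper. Overall your argument is shorter and more conceptual; the paper's is more computational but does not require spotting the $\ell$-free coupling.
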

\begin{proof}
	Given the representation in \eqref{e:rpwrep}, we have
	\begin{equation}
	\label{e:RPW pointwise}
	\mathbb{P}\left (\tilde{f}_{\ell }(t)-\ell \leq c\middle |g,\theta
	\right )=
	\begin{cases}
	\mathbb{P}\left (\lambda \leq (c-g(t)+a\ell )/b\right ) &\text{if }b(t,
	\theta )>0,
	\\
	\mathbb{P}\left (\lambda \geq (c-g(t)+a\ell )/b\right )&\text{if }b(t,
	\theta )<0,
	\\
	\mathds{1}_{a\ell +c-g(t)\geq 0} &\text{if }b(t,\theta )=0.
	\end{cases}
	\end{equation}
	It remains to show that each of the expressions on the right-hand side
	of \eqref{e:RPW pointwise} are non-decreasing in $\ell $ for all values
	of $g$, $\theta $ and $c$. Recall that $a = a(t, \theta ) \ge 0$ by Lemma~\ref{l:bessel}.
	Hence $\mathds{1}_{a\ell +c-g(t)\geq 0}$ is clearly non-decreasing in
	$\ell $. Moreover, after integrating \eqref{e:Lambda density}, we see that
	for a differentiable function $h:\mathbb{R}\to \mathbb{R}$
	\begin{equation}
	\label{e:RPW pointwise 2}
	\frac{d}{d\ell }\mathbb{P}\left (\lambda \leq h(\ell )\right )=p_{\lambda }(h)\left (h^{\prime }(\ell )+\frac{h(\ell )}{2h(\ell )+\ell }
	\right )\leq p_{\lambda }(h)\left (h^{\prime }(\ell )+1\right )
	\end{equation}
	where the last inequality follows from the fact that
	$p_{\lambda }(h)$ is zero unless $h>0\vee (-\ell )$. Now let
	$h(\ell )=(c-g(t)+a\ell )/b$, and first suppose $b>0$. Then
	$h^{\prime }(\ell )=a/b>0$, $h/(2h+\ell )\geq 0$ on the region
	$h>0\vee (-\ell )$ and $p_{\lambda }(h)\geq 0$ (as a probability density)
	so \eqref{e:RPW pointwise 2} shows that the left hand side of \eqref{e:RPW pointwise} is non-decreasing whenever $b>0$. Finally we suppose
	$b<0$ and note that
	\begin{displaymath}
	h^{\prime }(\ell )+1=\frac{a(t,\theta )+b(t,\theta )}{b(t,\theta )}=
	\frac{1-J_{0}(\lvert t\rvert )+2\cos (2(\theta -\arg t))J_{2}(\lvert t\rvert )}{b(t,\theta )}=
	\frac{a(t,\theta +\pi /2)}{b(t,\theta )}\leq 0.
	\end{displaymath}
	So once again, \eqref{e:RPW pointwise 2} shows the left hand side of \eqref{e:RPW pointwise} is non-decreasing whenever $b<0$, completing the
	proof of the lemma.
\end{proof}

We now extend this result to finite-dimensional projections of
$\tilde{f}_{\ell }-\ell $. Recall that a random vector
$X=(X_{1},\dots ,X_{n})$ is said to \textit{stochastically dominate} a random
vector $Y=(Y_{1},\dots ,Y_{n})$, written $X \succ Y$, if
$\mathbb{E}(g(X))\geq \mathbb{E}(g(Y))$ for any coordinate-wise increasing
$g:\mathbb{R}^{n}\to \mathbb{R}$. Clearly, if $X \succ Y$ then
$X_{i} \succ Y_{i}$ for each $i=1,\dots ,n$. The converse is not true in
general, but a useful sufficient condition can be formulated using the
notion of copulas.

Let $X=(X_{1},\dots ,X_{n})$, where $X_{i}$ has cumulative density function
$F_{i}$ and induced probability measure $\mathbb{P}_{i}$. Then Sklar's
theorem states that there exists a (unique on
$\Pi _{i=1}^{n}\text{Range}(\mathbb{P}_{i})$) function
$\text{Cop}_{X}:[0,1]^{n}\to [0,1]$, known as the \textit{copula} of
$X$, such that
\begin{displaymath}
\mathbb{P}\left (X\in A_{1}\times \dots \times A_{n}\right )=
\text{Cop}_{X}(\mathbb{P}_{1}(A_{1}),\dots ,\mathbb{P}_{n}(A_{n}))
\end{displaymath}
for all $A_{1},\dots ,A_{n}\in \mathcal{B}(\mathbb{R})$. The copula is
equivalently specified by
\begin{equation}
\label{e:copula}
\text{Cop}_{X}(u_{1},\dots ,u_{n})=\mathbb{P}\left (F_{1}(X_{1})\leq u_{1},
\dots ,F_{n}(X_{n})\leq u_{n}\right ),
\end{equation}
i.e.\ $\text{Cop}_{X}$ is the joint cumulative density function of the collection
of uniform-[0,1] random variables $F_{1}(X_{1}),\dots ,F_{n}(X_{n})$.

\begin{theorem}[{\cite[Theorem~2]{scarsini1988multivariate}}]%
	\label{t:Stochastic domination marginal}
	Let $X=(X_{1},\dots ,X_{n})$ and $Y=(Y_{1},\dots ,Y_{n})$ be random vectors
	with induced marginal probability measures
	$\mathbb{P}_{1},\dots ,\mathbb{P}_{n}$ and
	$\mathbb{Q}_{1},\dots ,\mathbb{Q}_{n}$ respectively. If
	$\mathrm{Cop}_{X} = \mathrm{Cop}_{Y}$,
	$\Pi _{i=1}^{n} \mathrm{Range}(\mathbb{P}_{i})=\Pi _{i=1}^{n}
	\mathrm{Range}(\mathbb{Q}_{i})$, and $X_{i} \succ Y_{i}$ for each $i$, then
	$X \succ Y$.
\end{theorem}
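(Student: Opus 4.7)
The plan is to prove $X \succ Y$ by exhibiting an explicit monotone coupling: constructing $\tilde X \stackrel{d}{=} X$ and $\tilde Y \stackrel{d}{=} Y$ on the same probability space such that $\tilde X_i \geq \tilde Y_i$ almost surely for every coordinate $i$. Once such a coupling exists, the desired domination is immediate, since for any coordinate-wise increasing $g:\mathbb{R}^n\to\mathbb{R}$ we would have $g(\tilde X) \geq g(\tilde Y)$ pathwise, hence $\mathbb{E}g(X) = \mathbb{E}g(\tilde X) \geq \mathbb{E}g(\tilde Y) = \mathbb{E}g(Y)$.

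To build the coupling, I would first introduce the generalised right-inverse CDFs $F_i^{-1}(u) := \inf\{x : F_i(x) \geq u\}$ (and analogously $G_i^{-1}$ for the marginals of $Y$) and record two elementary facts. The probability integral transform says that if $U$ is uniform on $[0,1]$ then $F_i^{-1}(U)$ has law $\mathbb{P}_i$. The assumption $X_i \succ Y_i$ is equivalent to $F_i(t) \leq G_i(t)$ for all $t\in\mathbb{R}$, and taking generalised inverses flips the inequality to $F_i^{-1}(u) \geq G_i^{-1}(u)$ for all $u\in[0,1]$.

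Next I would invoke Sklar's theorem together with \eqref{e:copula}: construct a random vector $U = (U_1, \ldots, U_n)$ with uniform-$[0,1]$ marginals whose joint CDF equals the common copula $\mathrm{Cop}_X = \mathrm{Cop}_Y$. Setting $\tilde X := (F_1^{-1}(U_1), \ldots, F_n^{-1}(U_n))$ and $\tilde Y := (G_1^{-1}(U_1), \ldots, G_n^{-1}(U_n))$, the standard copula representation of a joint distribution yields $\tilde X \stackrel{d}{=} X$ and $\tilde Y \stackrel{d}{=} Y$, while the inverse-CDF inequality from the previous paragraph gives $\tilde X_i \geq \tilde Y_i$ pointwise. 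This is precisely the monotone coupling sought, and completes the argument.

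The one genuine technical point, and the main obstacle, is that when the marginals are not continuous the copula of a random vector is only uniquely determined on $\Pi_{i=1}^n \mathrm{Range}(\mathbb{P}_i)$, so \emph{a priori} the hypothesis $\mathrm{Cop}_X = \mathrm{Cop}_Y$ does not even make sense outside a common domain. The assumption $\Pi_{i=1}^n \mathrm{Range}(\mathbb{P}_i) = \Pi_{i=1}^n \mathrm{Range}(\mathbb{Q}_i)$ is precisely what reconciles this: it ensures that the two copulas are defined and agree on the same uniqueness set, so a single choice of extension can simultaneously represent both $X$ and $Y$ via the inverse-CDF construction above. In the fully continuous case this subtlety disappears, and the proof reduces to the familiar Strassen-type argument.
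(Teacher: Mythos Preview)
The paper does not prove this theorem; it is quoted from \cite{scarsini1988multivariate} and used as a black box. Your argument is the standard and correct one: build a single copula-distributed vector $U$, push it through the respective inverse marginal CDFs to obtain $\tilde X$ and $\tilde Y$, and use $F_i\le G_i \Rightarrow F_i^{-1}\ge G_i^{-1}$ to get a coordinatewise monotone coupling. Your identification of the role of the hypothesis $\Pi_i\mathrm{Range}(\mathbb{P}_i)=\Pi_i\mathrm{Range}(\mathbb{Q}_i)$ is also right: it guarantees that one common extension of the copula simultaneously represents both joint laws via Sklar, so that the same $U$ reproduces $X$ and $Y$.
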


Using this theorem, we extend Lemma~\ref{l:RPW pointwise stochastic decreasing}
to show the stochastic monotonicity of the finite-dimensional projections
$\tilde{f}_{\ell }-\ell $, conditional on any $g,\theta $.

\begin{lemma}%
	\label{l:RPW finite stochastic decreasing}
	Let $f$ be the RPW. For $\ell _{1}<\ell _{2}$ and
	$t_{1},\dots ,t_{n}\in \mathbb{R}^{2}$,
	\begin{equation}
	\label{e:Stochastic domination vector}
	\left (\tilde{f}_{\ell _{1}}(t_{1})-\ell _{1},\dots ,\tilde{f}_{\ell _{1}}(t_{n})-
	\ell _{1}\middle |g,\theta \right ) \succ \left (\tilde{f}_{\ell _{2}}(t_{1})-
	\ell _{2},\dots ,\tilde{f}_{\ell _{2}}(t_{n})-\ell _{2}\middle |g,
	\theta \right ).
	\end{equation}
\end{lemma}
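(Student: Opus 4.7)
The plan is to condition on the pair $(g,\theta)$ and reduce the statement to an application of Scarsini's theorem (Theorem~\ref{t:Stochastic domination marginal}). Writing $a_i := a(t_i,\theta)$ and $b_i := b(t_i,\theta)$, the representation \eqref{e:rpwrep} gives
\[
\tilde{f}_{\ell}(t_i) - \ell = g(t_i) - a_i\ell + b_i\lambda,
\]
and by \eqref{e:Lambda density} the random variables $\theta$ and $\lambda = \lambda_\ell$ are independent (with $\theta$ uniform on $[0,2\pi)$, independently of $\ell$). Hence, conditional on $(g,\theta)$, the entire vector $(\tilde{f}_\ell(t_i) - \ell)_{i=1}^n$ is a deterministic affine function of the single scalar $\lambda$, with intercepts depending on $\ell$ but slopes $b_i$ independent of $\ell$.

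Marginal stochastic dominance of the two sides of \eqref{e:Stochastic domination vector} follows coordinatewise from Lemma~\ref{l:RPW pointwise stochastic decreasing}, so the remaining step is to verify that, given $(g,\theta)$, the two sides share the same copula. For indices $i$ with $b_i \neq 0$, let $F_\ell$ denote the (continuous) CDF of $\lambda_\ell$. The marginal CDF of $\tilde{f}_\ell(t_i) - \ell$ then equals $F_\ell\bigl((\cdot - g(t_i) + a_i\ell)/b_i\bigr)$ if $b_i > 0$ and its complement if $b_i < 0$; applying this CDF to the coordinate itself returns $U := F_\ell(\lambda)$ or $1 - U$, respectively. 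Since $U$ is uniform on $[0,1]$, the joint law of the CDF-transformed vector depends on $(g,\theta)$ only through $(\operatorname{sign}(b_i))_i$ and hence coincides for $\ell = \ell_1$ and $\ell = \ell_2$ on $\prod_{i:b_i\neq 0}[0,1]$.

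For indices with $b_i = 0$, the coordinate is the deterministic quantity $g(t_i) - a_i\ell$; since $a_i \ge 0$ by Lemma~\ref{l:bessel}, this coordinate at $\ell_1$ exceeds that at $\ell_2$, and both marginal ranges equal $\{0,1\}$, so the range hypothesis of Theorem~\ref{t:Stochastic domination marginal} is satisfied on these indices as well. Assembling the two cases and invoking Scarsini's theorem yields \eqref{e:Stochastic domination vector}. The only real obstacle is the bookkeeping with the signs of the $b_i$ and the degenerate case $b_i = 0$; the substantive content already lies in Lemma~\ref{l:RPW pointwise stochastic decreasing}, and the fact that conditional on $(g,\theta)$ the joint distribution is driven by a single random variable makes the copula condition come essentially for free.
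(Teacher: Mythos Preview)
Your proposal is correct and follows essentially the same route as the paper: both reduce to Scarsini's theorem (Theorem~\ref{t:Stochastic domination marginal}), obtain the marginal dominance from Lemma~\ref{l:RPW pointwise stochastic decreasing}, and verify equality of copulas by observing that, conditional on $(g,\theta)$, each coordinate is an affine function of the single scalar $\lambda_\ell$ with slope $b_i$, so the copula depends only on the sign pattern $(\operatorname{sign}(b_i))_i$ and not on $\ell$. Your treatment of the degenerate coordinates with $b_i=0$ is slightly more explicit than the paper's, but otherwise the arguments coincide.
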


\begin{proof}
	By Theorem~\ref{t:Stochastic domination marginal} it is sufficient to show
	that the random vectors in \eqref{e:Stochastic domination vector} have
	the same copula (these copulas are uniquely defined on the same domain,
	and the stochastic domination of the marginals follows from Lemma~\ref{l:RPW pointwise stochastic decreasing}).
	
	Fix $\ell \in \mathbb{R}$ and
	$t_{1},\dots ,t_{n}\in \mathbb{R}^{2}$, and consider the copula
	\begin{align*}
	\text{Cop}_{Z} := \text{Cop}_{\tilde{f}_{\ell }(t_{1})-\ell ,\dots ,
		\tilde{f}_{\ell }(t_{n})-\ell \big |g,\theta } .
	\end{align*}
	By the definition of $a$ and $b$, we can express
	\begin{displaymath}
	\tilde{f}_{\ell }(t)-\ell =g(t)-\ell a(t,\theta )+\lambda _{\ell } b(t,
	\theta )
	\end{displaymath}
	for deterministic functions $a,b$. Since $g(t_{i})$,
	$\ell a(t_{i},\theta )$ and $b(t_{i},\theta )$ are constants under the
	conditioning on $(g, \theta )$, and since copulas are invariant under strictly
	increasing transformations,
	\begin{displaymath}
	\text{Cop}_{Z} = \text{Cop}_{\lambda _{\ell }\cdot \text{sign}(b(t_{1},
		\theta )),\dots ,\lambda _{\ell }\cdot \text{sign}(b(t_{n},\theta ))
		\big |g,\theta } =\text{Cop}_{\lambda _{\ell }\cdot \text{sign}(b(t_{1},
		\theta )),\dots ,\lambda _{\ell }\cdot \text{sign}(b(t_{n},\theta ))
		\big |\theta },
	\end{displaymath}
	where the last equality holds since $g$ is independent of
	$\lambda =\lambda _{\ell }$ and $\theta $. Notice that the random vector
	$(\lambda _{\ell }\cdot \text{sign}(b(t_{1},\theta )),\dots ,\lambda _{
		\ell }\cdot \text{sign}(b(t_{n},\theta ))\big |\theta )$ consists of
	$\lambda _{\ell }$ multiplied by a constant vector (with elements taking
	values $1$, $-1$ or $0$). Hence by considering the alternative characterisation
	of a copula in \eqref{e:copula}, it is clear that $\text{Cop}_{Z}$ does
	not depend on the distribution of $\lambda _{\ell }$, and so is independent
	of~$\ell $.
\end{proof}

\subsubsection{Stochastic monotonicity in the general isotropic case}
\label{ss:mongen}

The copula argument in the RPW case relies crucially on the degeneracy
of RPW, which implies that after conditioning on $\theta $ and $g$, the
field depends only on the single random variable $\lambda $. For general
isotropic fields, $\tilde{f}_{\ell }$ is defined in terms of two eigenvalues,
so this argument fails. Instead we use a different method that works with
the finite-dimensional projections directly.

Let $f$ satisfy Assumptions~\ref{a:minimal},~\ref{a:non-degenerate gradient}
and~\ref{a:Monotonicity}. Recall from Proposition~\ref{p:ftildeiso} that
\begin{displaymath}
\tilde{f}_{\ell }(t)=g(t)+\ell \alpha (t)+\lambda _{1} b_{1}(t,\theta )+
\lambda _{2} b_{2}(t,\theta )
\end{displaymath}
for $\alpha , b_{1}, b_{2}$ as stated in the proposition (recall that
$b_{1}$ and $b_{2}$ are defined in terms of $\beta $). The role of Assumption~\ref{a:Monotonicity}
is to ensure the following inequalities hold for
$\alpha , b_{1}, b_{2}$:

\begin{lemma}
	\label{l:genmonassump}
	Let $f$ satisfy Assumptions~\ref{a:minimal},~\ref{a:non-degenerate gradient}
	and~\ref{a:Monotonicity}. For all $t \in \mathbb{R}^{2}$ and
	$\theta \in [0,2\pi )$,
	\begin{equation*}
	b_{1}(t,\theta ) \ge 0 \ , \quad b_{2}(t, \theta ) \ge 0 \quad
	\text{and} \quad \alpha (t) \le 1 .
	\end{equation*}
\end{lemma}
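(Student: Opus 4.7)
The plan is to interpret each of $\alpha(t)$, $b_1(t,\theta)$, and $b_2(t,\theta)$ as an explicit conditional expectation of $f$, and then to verify the three inequalities by reducing them to the hypotheses packaged in Assumption~\ref{a:Monotonicity}, using isotropy of $f$ in the $b_1$, $b_2$ cases. Since $(f(0),\nabla^2 f(0))$ is non-degenerate under Assumption~\ref{a:Monotonicity}, the defining relation for $\alpha$ and $\beta$ from Lemma~\ref{l:conditional distribution} is well-posed; setting $u=1$, $U=0$ gives $\alpha(t) = \mathbb{E}(f(t)\mid f(0)=1,\,\nabla^2 f(0)=0)$, so the bound $\alpha(t)\le 1$ is exactly~\eqref{e:mon2}. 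Writing $v(\theta)=(\cos\theta,\sin\theta)$, setting $u=0$, and taking $U$ equal to the rank-one matrix $v(\theta)v(\theta)^T$ (with entries $U_{11}=\cos^2\theta$, $U_{22}=\sin^2\theta$, $U_{12}=\sin\theta\cos\theta$), the expression $\beta(t)\cdot U$ reproduces exactly the formula for $b_1$ in Proposition~\ref{p:ftildeiso}; thus $b_1(t,\theta) = \mathbb{E}(f(t)\mid f(0)=0,\,\nabla^2 f(0)=v(\theta)v(\theta)^T)$.

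I would then prove $b_1(t,\theta)\ge 0$ by reduction to $\theta=0$. The case $\theta=0$ is immediate since $v(0)v(0)^T=\bigl(\begin{smallmatrix}1&0\\0&0\end{smallmatrix}\bigr)$, so $b_1(t,0)\ge 0$ is precisely~\eqref{e:mon}. For general $\theta$, consider the auxiliary field $\tilde f(x):=f(R_\theta x)$, where $R_\theta$ denotes rotation by angle $\theta$; by isotropy $\tilde f\stackrel{d}{=}f$, while direct computation gives $\nabla^2\tilde f(0) = R_\theta^T\nabla^2 f(0)R_\theta$. Since $v(\theta)v(\theta)^T = R_\theta\bigl(\begin{smallmatrix}1&0\\0&0\end{smallmatrix}\bigr)R_\theta^T$, substituting $t=R_\theta s$ and rewriting the conditioning event in terms of $\tilde f$ yields
\begin{align*}
b_1(R_\theta s,\theta)
&= \mathbb{E}\Bigl(\tilde f(s)\,\Big|\,\tilde f(0)=0,\ \nabla^2 \tilde f(0)=\bigl(\begin{smallmatrix}1&0\\0&0\end{smallmatrix}\bigr)\Bigr) \\
&= \mathbb{E}\Bigl(f(s)\,\Big|\,f(0)=0,\ \nabla^2 f(0)=\bigl(\begin{smallmatrix}1&0\\0&0\end{smallmatrix}\bigr)\Bigr) \ge 0,
\end{align*}
where the second equality uses $\tilde f\stackrel{d}{=}f$ and the inequality is~\eqref{e:mon}. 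Since $R_\theta$ is a bijection of $\mathbb{R}^2$, this proves $b_1(t,\theta)\ge 0$ for all $t$.

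Finally, the identity $b_2(t,\theta)=b_1(t,\pi/2-\theta)$ follows by direct substitution in the formulas of Proposition~\ref{p:ftildeiso} (using $\cos^2(\pi/2-\theta)=\sin^2\theta$, $\sin^2(\pi/2-\theta)=\cos^2\theta$, and $\sin(\pi/2-\theta)\cos(\pi/2-\theta)=\sin\theta\cos\theta$), so the non-negativity of $b_2$ is inherited from that of $b_1$. The only step that requires genuine care is the isotropy transport used in the $b_1$ argument: one must justify that the conditional expectation can be transported through the change of variable $f\mapsto \tilde f$. However, because $(f(0),\nabla^2 f(0))$ is non-degenerate, this conditional expectation is given explicitly by Gaussian regression, which depends linearly on the conditioning values and so transports through the rotation $R_\theta$ without difficulty.
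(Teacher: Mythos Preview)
Your proof is correct and follows essentially the same approach as the paper: identify $\alpha$ and $b_1(\cdot,0)$ with the conditional expectations appearing in Assumption~\ref{a:Monotonicity}, use isotropy to extend from $\theta=0$ to general $\theta$, and obtain $b_2$ from $b_1$ by a shift in $\theta$. You have simply made explicit the rotation argument that the paper compresses into the phrase ``since $f$ is isotropic,'' and your conditional-expectation interpretation of $b_1(t,\theta)$ is a clean way to carry this out.
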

\begin{proof}
	From the definition of $b_{1}(t,\theta )$ and $\beta $, it is immediate
	that $b_{1}(t,0)$ is the quantity given in Assumption~\ref{a:Monotonicity}
	to be non-negative for all values of $t$. Since $f$ is isotropic,
	$b_{1}$ is non-negative for all values of $\theta $. (By the identity
	$\cos (\theta )=\sin (\theta +\pi /2)$, this also means that $b_{2}$ is
	non-negative.) Similarly, $\alpha $ is the function given in Assumption~\ref{a:Monotonicity}
	to be bounded above by $1$.
\end{proof}

\begin{lemma}%
	\label{l:General field finite stochastic decreasing}
	Let $f$ satisfy Assumptions~\ref{a:minimal},~\ref{a:non-degenerate gradient}
	and~\ref{a:Monotonicity}. For any
	$t_{1},\dots ,t_{n}\in \mathbb{R}^{2}$ and
	$u_{1},\dots ,u_{n}\in \mathbb{R}$,
	\begin{displaymath}
	\mathbb{P} \left ( \tilde{f}_{\ell }(t_{i})-\ell \leq u_{i}\;\forall i=1,
	\dots ,n\;\middle |g,\theta \right )
	\end{displaymath}
	is non-decreasing in $\ell \in \mathbb{R}$.
\end{lemma}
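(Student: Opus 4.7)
After conditioning on $(g, \theta)$, I would first use the representation of $\tilde f_\ell$ in Proposition~\ref{p:ftildeiso} together with the non-negativity facts from Lemma~\ref{l:genmonassump} (namely $b_1, b_2 \ge 0$ and $\alpha \le 1$) to rewrite the event $\{\tilde f_\ell(t_i) - \ell \le u_i \text{ for all } i\}$ as
\[
\bigcap_{i=1}^{n} \bigl\{ b_1(t_i, \theta)\, \lambda_1 + b_2(t_i, \theta)\, \lambda_2 \le v_i(\ell) \bigr\},
\]
where $v_i(\ell) := u_i - g(t_i) + \ell(1 - \alpha(t_i))$ is non-decreasing in $\ell$. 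Since the coefficients $b_j(t_i, \theta)$ are non-negative, the event region is a coordinate-wise lower set in $(\lambda_1, \lambda_2)$, and the growth of the thresholds $v_i(\ell)$ makes this set expand as $\ell$ increases.

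The plan is then to prove that $(\lambda_1, \lambda_2)$ is stochastically decreasing in $\ell$ with respect to the coordinate-wise partial order: for $\ell_1 < \ell_2$, the law $q_{\ell_1}$ dominates $q_{\ell_2}$. Given this, the lemma would follow immediately, since writing $A(\ell)$ for the event region we have
\[
\mathbb{P}_{\ell_1}(A(\ell_1)) \le \mathbb{P}_{\ell_1}(A(\ell_2)) \le \mathbb{P}_{\ell_2}(A(\ell_2)),
\]
the first inequality using $A(\ell_1) \subseteq A(\ell_2)$ and the second using stochastic monotonicity applied to the coordinate-wise lower set $A(\ell_2)$.

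To carry out the stochastic monotonicity step, I would exploit the factorisation of $q_\ell$ from Proposition~\ref{p:ftildeiso}: a non-$\ell$-dependent weight $|x|\, y\, (y-x)\, \mathds{1}_{y > 0 > x}$ times a bivariate Gaussian centred at $(\mu\ell, \mu\ell)$ with $\mu = 2k'(0) < 0$, so that the Gaussian shifts uniformly downward as $\ell$ grows. The natural first attempt is to set up a monotone coupling via the underlying Gaussian noise, using that the weight is $\ell$-invariant. An alternative is the change of variables $r = -\lambda_1$, $\rho = \lambda_2/(-\lambda_1)$ which, after completing the square in $m = \mu\ell$, concentrates the entire $\ell$-dependence of the density into a single Gaussian factor of the form $\exp\!\bigl(-(1+\tau)(m + r(1-\rho)/2)^2/\sigma^2\bigr)$; this factorisation may be more directly amenable to a comparison argument.

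The main obstacle will be establishing the stochastic monotonicity when $q_\ell$ is not log-supermodular in $(\lambda_1, \lambda_2)$. A direct computation gives $\partial_{\lambda_1}\partial_{\lambda_2} \log q_\ell = 1/(\lambda_2 - \lambda_1)^2 - \tau/\sigma^2$, which is negative whenever $\lambda_2 - \lambda_1$ is large and $\tau > 0$ (equivalently $\chi > 1$). Consequently, Holley's inequality and standard FKG/MTP2-based tools cannot be invoked directly; a more delicate argument is required, exploiting both the specific half-plane structure of our events (whose boundary normals are confined to the non-negative quadrant) and the fact that the $\ell$-dependence of the law of $(\lambda_1, \lambda_2)$ reduces entirely to a uniform translation of the Gaussian mean by $(\mu\ell, \mu\ell)$.
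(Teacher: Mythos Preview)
Your setup is correct and matches the paper: after conditioning on $(g,\theta)$ the event becomes $(\lambda_1,\lambda_2)\in A_\ell$ where $A_\ell$ is an intersection of half-planes with non-negative normals, and $A_\ell$ expands with $\ell$ because $\alpha\le 1$. The two-step inequality $\mathbb{P}_{\ell_1}(A(\ell_1))\le\mathbb{P}_{\ell_1}(A(\ell_2))\le\mathbb{P}_{\ell_2}(A(\ell_2))$ is also the right decomposition.

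The genuine gap is the second inequality: you propose to prove that the law of $(\lambda_1,\lambda_2)$ is stochastically decreasing in $\ell$ for the coordinate-wise order, but you do not prove it, and neither of your suggested routes is shown to work. The monotone-coupling idea via translated Gaussians does not survive the $\ell$-dependent renormalisation after multiplying by the weight $|x|y(y-x)\mathds{1}_{y>0>x}$; the change of variables $(r,\rho)$ is stated but not exploited. You correctly diagnose that log-supermodularity fails for $\tau>0$, so Holley/FKG are unavailable, and then stop at ``a more delicate argument is required''. That delicate argument is the entire content of the lemma. Note also that full coordinate-wise stochastic monotonicity is \emph{stronger} than what is needed (your sets $A_\ell$ are convex lower sets, not arbitrary lower sets), and the paper does not prove it; aiming for it may be a dead end.

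What the paper actually does is quite different from your coupling sketch. It differentiates $\mathbb{P}_\ell(A_\ell)$ in $\ell$ and uses the explicit identity
\[
\frac{\partial}{\partial\ell}q_\ell(x,y)=\frac{\mu(1+\tau)}{\sigma^2}\,(x+y-2\mu\ell)\,q_\ell(x,y)
\]
(so the $\ell$-derivative of the log-density is affine in $x+y$, exactly the ``diagonal translation'' structure you noted) to reduce the density contribution to the single scalar inequality
\[
\mathbb{E}\bigl(\lambda_1+\lambda_2\,\big|\,(\lambda_1,\lambda_2)\in A_\ell\bigr)\le\mathbb{E}(\lambda_1+\lambda_2).
\]
This is then handled by a geometric case analysis on the shape of $\partial A_\ell$: one compares $A_\ell$ to a half-space $\{\lambda_1\le c_1\}$, $\{\lambda_2\le c_2\}$, or a quadrant $\{\lambda_1\le c_1,\lambda_2\le c_2\}$ chosen so that the conditional mean of $\lambda_1+\lambda_2$ can only increase. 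These simpler regions are dealt with via Gromov's monotone l'H\^opital theorem together with a direct (and lengthy) verification that $\mathbb{E}(\lambda_1+\lambda_2\mid\lambda_1=x,\lambda_2\le c_2)$ is non-decreasing in $x$; it is precisely here that the hypothesis $\chi\ge 1$ (equivalently $\tau\ge 0$) is used. None of this machinery appears in your proposal.
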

\begin{proof}
	Since the $u_{i}$ are arbitrary, we may assume $g(t_{i})=0$ for all
	$i$. We define the region
	\begin{displaymath}
	A_{\ell }=\left \{  (x,y)\in \mathbb{R}^{2}:xb_{1}(t_{i},\theta )+yb_{2}(t_{i},
	\theta )\leq u_{i}+(1-\alpha (t_{i}))\ell ,\;\;\forall i=1,\dots ,n
	\right \}
	\end{displaymath}
	so that $\tilde{f}_{\ell }(t_{i})-\ell \leq u_{i}$ for all $i$ if and only
	if $(\lambda _{1},\lambda _{2})\in A_{\ell }$. It is enough to prove that
	the probability of the latter event is non-decreasing in $\ell $ because
	$(\lambda _{1},\lambda _{2})$ is independent of $(g,\theta )$. Given the
	density of $(\lambda _{1},\lambda _{2})$ in Proposition~\ref{p:ftildeiso},
	\begin{align*}
	\frac{d}{d\ell }\mathbb{P}\left ((\lambda _{1},\lambda _{2})\in A_{\ell }\right )&=\frac{d}{d\ell }
	\frac{\int _{A_{\ell }}q_{\ell }(x,y)\;dxdy}{\int _{\mathbb{R}^{2}}q_{\ell }(x,y)\;dxdy}
	\\
	&=
	\frac{\int _{\mathbb{R}^{2}}q_{\ell }\;dxdy\cdot \frac{d}{d\ell }\int _{A_{\ell }}q_{\ell }\;dxdy-\int _{A_{\ell }}q_{\ell }\;dxdy\cdot \frac{d}{d\ell }\int _{\mathbb{R}^{2}}q_{\ell }\;dxdy}{\left (\int _{\mathbb{R}^{2}}q_{\ell }\;dxdy\right )^{2}}
	.
	\end{align*}
	Since $b_{1},b_{2}\geq 0$ and $\alpha \leq 1$ by Lemma~\ref{l:genmonassump},
	$A_{\ell }$ is non-decreasing in $\ell $, and so for this derivative to be
	non-negative it is sufficient that
	\begin{equation}
	\label{e:Eigenvalue derivation 1}
	\frac{\int _{A_{\ell }}\frac{d}{d\ell }q_{\ell }(x,y)\;dxdy}{\int _{A_{\ell }}q_{\ell }(x,y)\;dxdy}
	\geq
	\frac{\int _{\mathbb{R}^{2}}\frac{d}{d\ell }q_{\ell }(x,y)\;dxdy}{\int _{\mathbb{R}^{2}}q_{\ell }(x,y)\;dxdy}.
	\end{equation}
	By direct evaluation
	\begin{displaymath}
	\frac{d}{d\ell }q_{\ell }(x,y)=\frac{\mu (1+\tau )}{\sigma ^{2}}(x+y-2
	\mu \ell )q_{\ell }(x,y),
	\end{displaymath}
	where $\mu $, $\tau $ and $\sigma ^{2}$ are defined in \eqref{e:density parameters}. Since $\mu <0$, \eqref{e:Eigenvalue derivation 1} is equivalent to
	\begin{equation}
	\label{e:Eigenvalue derivation 2}
	\mathbb{E}\left (\lambda _{1}+\lambda _{2}|(\lambda _{1},\lambda _{2})
	\in A_{\ell }\right )\leq \mathbb{E}\left (\lambda _{1}+\lambda _{2}
	\right ).
	\end{equation}
	To complete the proof of the lemma, we show that this inequality holds
	for any possible region~$A_{\ell }$. Since the shape of $A_{\ell }$ might be
	quite complicated (see Figure~\ref{Fig_5} for a typical example), we divide
	the analysis into three cases and in each case show that conditioning on
	$(\lambda _{1},\lambda _{2})$ being contained in some simple region can
	only increase the expectation of $\lambda _{1}+\lambda _{2}$ relative to
	conditioning on $(\lambda _{1},\lambda _{2})\in A_{\ell }$.
	
	\begin{figure}[h!]
		\centering
		%%LEAP%%\input{Fig_5.tikz}
		\begin{tikzpicture}[scale=0.15]
		\draw[->] (-20,0) -- (20,0);
		\draw[->](0,-20)--(0,20);
		\node[below] at (20,0) {$x$};
		\node[right] at (0,20) {$y$};
		\draw[thick](-25,15)--(-15,13)--(-9,9)--(-3,2)--(-3,-22);
		\path[fill=gray,opacity=0.5](-25,15)--(-15,13)--(-9,9)--(-3,2)--(-3,-22)--plot[smooth, tension=.7] coordinates {(-3,-22)(-12,-22)(-23,-5)(-28,8)(-25,15)};
		\draw (-4,9)--(-7,3);
		\node[above] at (-4,9) {$A_\ell$};
		\end{tikzpicture}
		\caption{A typical example of the region $A_{\ell }$.}%
		\label{Fig_5}
	\end{figure}
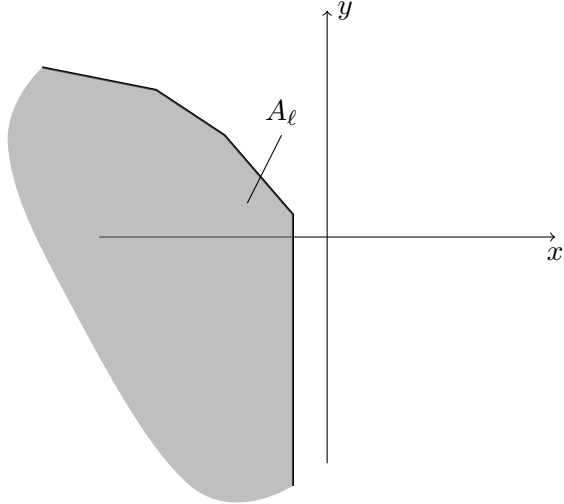
	
	(Case 1). Suppose $b_{1}(t_{i},\theta )/b_{2}(t_{i},\theta )= 1$ for all
	$i$, so that the boundary of $A_{\ell }$, denoted $\partial A_{\ell }$, is
	a line of the form $x+y=c$ for some $c\in \mathbb{R}$. (Note that
	$A_{\ell }$ is a subset of the entire plane not just the upper-left quadrant
	which is the support of $(\lambda _{1},\lambda _{2})$.) Then trivially
	\begin{displaymath}
	\mathbb{E}\left (\lambda _{1}+\lambda _{2}|(\lambda _{1},\lambda _{2})
	\in A_{\ell }\right )=\mathbb{E}\left (\lambda _{1}+\lambda _{2}|
	\lambda _{1}+\lambda _{2}\leq c\right )\leq \mathbb{E}\left (\lambda _{1}+
	\lambda _{2}\right )
	\end{displaymath}
	and so \eqref{e:Eigenvalue derivation 2} is verified in this case.
	
	(Case 2). Now suppose that
	$b_{1}(t_{i},\theta )/b_{2}(t_{i},\theta )\geq 1$ for all $i$ and that
	this inequality is strict for some $i$ (allowing for the degenerate case
	that $b_{2}(t_{i},\theta )=0$). Let
	$c^{*}=\mathbb{E}\left (\lambda _{1}+\lambda _{2}|(\lambda _{1},
	\lambda _{2})\in A_{\ell }\right )$ and then we note that the line
	$x+y=c^{*}$ must intersect $\partial A_{\ell }$ at precisely one point (since
	the distribution of $(\lambda _{1},\lambda _{2})$ is continuous) which
	we denote by $(d_{1},d_{2})$. We now consider the region
	$\{x\leq d_{1}\}$, conditioning on $(\lambda _{1},\lambda _{2})$ lying
	in this region weakly increases the probability that
	$\lambda _{1}+\lambda _{2}=c$ for $c\geq c^{*}$ and weakly decreases this
	probability for $c<c^{*}$ (see Figure~\ref{Fig_6}). Therefore
	\begin{equation}
	\label{e:Eigenvalue derivation 3}
	\mathbb{E}\left (\lambda _{1}+\lambda _{2}|(\lambda _{1},\lambda _{2})
	\in A_{\ell }\right )\leq \mathbb{E}(\lambda _{1}+\lambda _{2}|\lambda _{1}
	\leq d_{1}).
	\end{equation}
	
	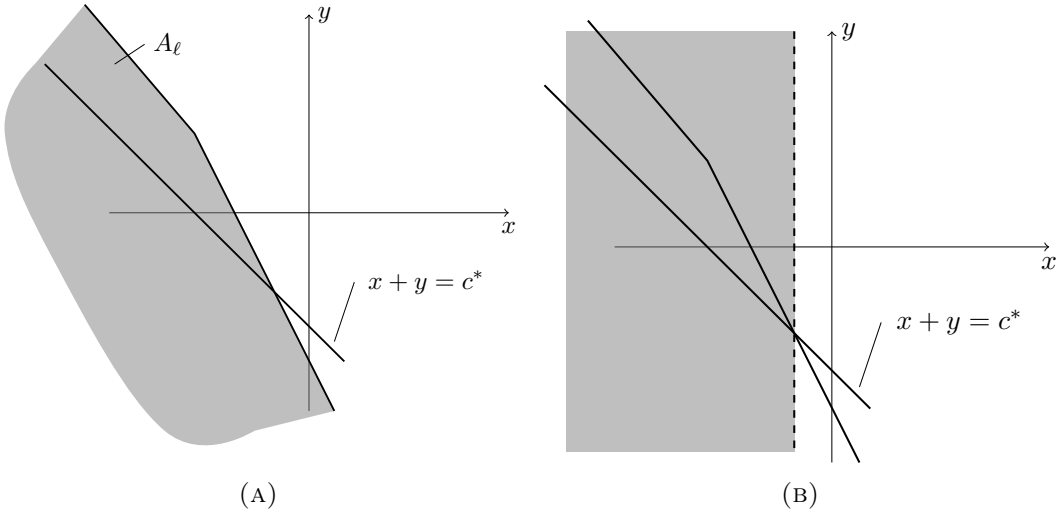
\begin{figure}[h!]
		\centering
		\begin{subfigure}[t]{0.45\textwidth}
			\resizebox{\linewidth}{!}{
				\begin{tikzpicture}[scale=0.15]
				\draw[->] (-20,0) -- (20,0);
				\draw[->](0,-20)--(0,20);
				\node[below] at (20,0) {$x$};
				\node[right] at (0,20) {$y$};
				\begin{scope}[xshift=-70]
				\path[fill=gray,opacity=0.5](-20,21)--(-9,8)--(5,-20)--plot[smooth, tension=.7] coordinates {(-3,-22)(-12,-22)(-23,-5)(-28,8)(-25,15)};
				\draw[thick](-20,21)--(-9,8)--(5,-20);
				\draw (5,-13)--(7,-7);
				\node[right] at (7.5,-7) {$x+y=c^*$};
				\draw [thick](-24,15)--(6,-15);
				\draw (-17,15)--(-14,17);
				\node[right] at (-14,17) {$A_\ell$};
				\end{scope}
				\end{tikzpicture}}
			\caption{}%
			\label{Fig_6a}
		\end{subfigure}
		\begin{subfigure}[t]{0.45\textwidth}
			\resizebox{\linewidth}{!}{
				\begin{tikzpicture}[scale=0.15]
				\draw[->] (-20,0) -- (20,0);
				\draw[->](0,-20)--(0,20);
				\node[below] at (20,0) {$x$};
				\node[right] at (0,20) {$y$};
				\begin{scope}[xshift=-70]
				\path[fill=gray,opacity=0.5] (-1,20)--(-1,-19)--(-22,-19)--(-22,20);
				\draw[thick](-20,21)--(-9,8)--(5,-20);
				\draw (5,-13)--(7,-7);
				\node[right] at (7.5,-7) {$x+y=c^*$};
				\draw [thick](-24,15)--(6,-15);
				\draw[dashed,thick] (-1,20)--(-1,-19);
				\end{scope}
				\end{tikzpicture}}
			\caption{}%
			\label{Fig_6b}
		\end{subfigure}
		\caption{When $A_{\ell }$ takes the form shown in~(a) (Case 2),
			we condition on the region shown in~(b), which weakly increases
			the mean of $\lambda _{1}+\lambda _{2}$. }%
		\label{Fig_6}
	\end{figure}
	
	If $b_{1}(t_{i},\theta )/b_{2}(t_{i},\theta )\leq 1$ for all $i$ and this
	inequality is strict for some $i$, then an entirely analogous argument
	shows that for some $d_{2}$
	\begin{equation}
	\label{e:Eigenvalue derivation 4}
	\mathbb{E}\left (\lambda _{1}+\lambda _{2}|(\lambda _{1},\lambda _{2})
	\in A_{\ell }\right )\leq \mathbb{E}(\lambda _{1}+\lambda _{2}|\lambda _{2}
	\leq d_{2}).
	\end{equation}
	
	(Case 3). Suppose that for some $i$ and $j$,
	$b_{1}(t_{i},\theta )/b_{2}(t_{i},\theta )<1<b_{1}(t_{j},\theta )/b_{2}(t_{j},
	\theta )$. Defining $c^{*}$ as before we note that the line
	$x+y=c^{*}$ must intersect $A_{\ell }$ (by definition of $c^{*}$) and so
	must intersect $\partial A_{\ell }$ at two points (since the distribution
	of $(\lambda _{1},\lambda _{2})$ has no atoms). We denote these points
	by $(d_{1},d_{2})$ and $(e_{1},e_{2})$ and without loss of generality take
	$d_{1}<e_{1}$. We now consider the region
	$\{x\leq e_{1}\}\cap \{y\leq d_{2}\}$. Reasoning as before, conditioning
	on $(\lambda _{1},\lambda _{2})$ lying in this region weakly increases
	the probability that $\lambda _{1}+\lambda _{2}=c$ for $c\geq c^{*}$ and
	weakly decreases this probability for $c<c^{*}$ (see Figure~\ref{Fig_7}).
	So in this case
	\begin{equation}
	\label{e:Eigenvalue derivation 5}
	\mathbb{E}\left (\lambda _{1}+\lambda _{2}|(\lambda _{1},\lambda _{2})
	\in A_{\ell }\right )\leq \mathbb{E}(\lambda _{1}+\lambda _{2}|\lambda _{1}
	\leq e_{1},\lambda _{2}\leq d_{2}).
	\end{equation}
	
	\begin{figure}[h!]
		\centering
		\begin{subfigure}[t]{0.45\textwidth}
			\resizebox{\linewidth}{!}{
				\begin{tikzpicture}[scale=0.15]
				\path[fill=gray,opacity=0.5](-25,15)--(-9,7)--(3,-20)--plot[smooth, tension=.7] coordinates {(3,-20)(-3,-22)(-12,-22)(-23,-5)(-28,8)(-25,15)};
				\draw[->] (-20,0) -- (20,0);
				\draw[->](0,-20)--(0,20);
				\node[below] at (20,0) {$x$};
				\node[right] at (0,20) {$y$};
				\draw[thick](-25,15)--(-9,7)--(3,-20);
				\draw (-4,9)--(-7,3);
				\node[above] at (-4,9) {$A_\ell$};
				\draw (5,-10)--(7,-7);
				\node[above] at (7.5,-7) {$x+y=c^*$};
				\draw [thick](-22,16)--(6,-12);
				\end{tikzpicture}}
			\caption{}%
			\label{Fig_7a}
		\end{subfigure}
		\begin{subfigure}[t]{0.45\textwidth}
			\resizebox{\linewidth}{!}{
				\begin{tikzpicture}[scale=0.15]
				\path[fill=gray,opacity=0.5] (-25,11)--(-6,11)--(-6,-22)--(-25,-22);
				\draw[->] (-20,0) -- (20,0);
				\draw[->](0,-20)--(0,20);
				\node[below] at (20,0) {$x$};
				\node[right] at (0,20) {$y$};
				\draw[thick](-25,15)--(-9,7)--(3,-20);
				\draw (5,-10)--(7,-7);
				\node[above] at (7.5,-7) {$x+y=c^*$};
				\draw [thick](-22,16)--(6,-12);
				\draw[thick,dashed](-25,11)--(-6,11)--(-6,-22);
				\end{tikzpicture}}
			\caption{}%
			\label{Fig_7b}
		\end{subfigure}
		\caption{When $A_{\ell }$ takes the form shown in~(a) (Case 3),
			we condition on the region shown in~(b), which weakly increases
			the mean of $\lambda _{1}+\lambda _{2}$. }%
		\label{Fig_7}
	\end{figure}
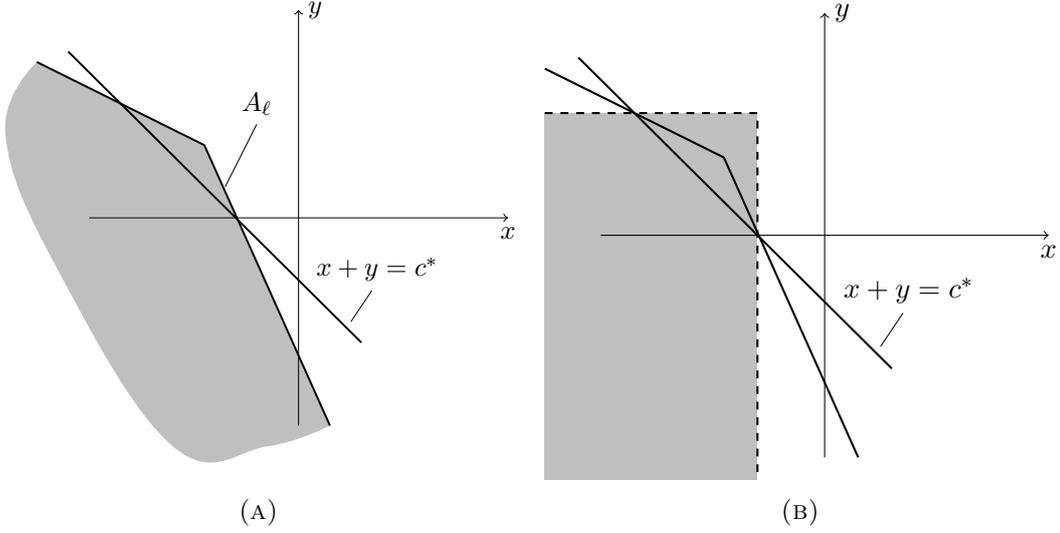
	
	From \eqref{e:Eigenvalue derivation 3}, \eqref{e:Eigenvalue derivation 4} and \eqref{e:Eigenvalue derivation 5} we see that in order to complete the
	proof of the lemma, we need only verify \eqref{e:Eigenvalue derivation 2} (or, equivalently, \eqref{e:Eigenvalue derivation 1}) when $A_{\ell }$ is of the form
	$\{\lambda _{1}\leq c_{1}\}$, $\{\lambda _{2}\leq c_{2}\}$ or
	$\{\lambda _{1}\leq c_{1},\lambda _{2}\leq c_{2}\}$. Furthermore, since
	$\lambda _{2}\geq 0\geq \lambda _{1}$, we may assume $c_{1}\leq 0$ and
	$c_{2}\geq 0$.
	
	Gromov's theorem (\cite[Theorem 1.3]{estrada2017hopital}) states that if
	$h_{1},h_{2}$ are integrable on $[a,b]$ such that $h_{2}>0$ and
	$h_{1}(x)/h_{2}(x)$ is non-increasing in $x$ then
	$\int _{a}^{c} h_{1}(x)\;dx/\int _{a}^{c} h_{2}(x)\;dx$ is non-increasing
	in $c$. Applying this to
	\begin{displaymath}
	h_{1}(x):=\int _{-\infty }^{c_{2}}\frac{d}{d\ell }q_{\ell }(x,y)\;dy
	\quad \text{and} \quad h_{2}(x):=\int _{-\infty }^{c_{2}} q_{\ell }(x,y)
	\;dy
	\end{displaymath}
	we see that, provided $h_{1}(x)/h_{2}(x)$ is non-increasing, we have
	\begin{displaymath}
	\frac{\int _{-\infty }^{c_{1}}\int _{-\infty }^{c_{2}} \frac{d}{d\ell }q_{\ell }(x,y)\;dydx}{\int _{-\infty }^{c_{1}}\int _{-\infty }^{c_{2}} q_{\ell }(x,y)\;dydx}
	\end{displaymath}
	is non-increasing in $c_{1}$. Then taking $c_{1}\to \infty $, we see that
	we need only verify \eqref{e:Eigenvalue derivation 1} when $A_{\ell }$ is
	of the form $\{\lambda _{1}\leq c_{1}\}$ or
	$\{\lambda _{2}\leq c_{2}\}$.
	
	It remains to show that $h_{1}(x)/h_{2}(x)$ is non-increasing in $x$, which
	is equivalent to
	$\mathbb{E}(\lambda _{1}+\lambda _{2}|\lambda _{1}=x,\lambda _{2}
	\leq c_{2})$ being non-decreasing in $x$ for $x<0$. Using the joint density
	of $(\lambda _{1},\lambda _{2})$ given in Proposition~\ref{p:ftildeiso}
	\begin{align*}
	\mathbb{E}(\lambda _{1}+\lambda _{2}|\lambda _{1}=x,\lambda _{2}\leq c_{2})=
	\frac{\int _{0}^{c_{2}} y(y^{2}-x^{2})\exp (-\frac{1}{2\sigma ^{2}}(y-m)^{2})\;dy}{\int _{0}^{c_{2}} y(y-x)\exp (-\frac{1}{2\sigma ^{2}}(y-m)^{2})\;dy}%
	\end{align*}
	where $m=(1+\tau )\mu \ell -\tau x$. We next differentiate this expression
	with respect to $x$. To simplify the resulting expression, we let
	$Z$ be a random variable with density proportional to
	$\exp (-\frac{1}{2\sigma ^{2}}(y-m)^{2})\mathds{1}_{y\in [0,c_{2}]}$ (i.e.\ $Z$
	is a truncated normal variable). Then using the Leibniz rule for differentiating
	integrals:
	\begin{multline*}
	\left (
	\frac{\int _{0}^{c_{2}} y(y-x)\exp (-\frac{1}{2\sigma ^{2}}(y-m)^{2})\;dy}{\int _{0}^{c_{2}}\exp (-\frac{1}{2\sigma ^{2}}(y-m)^{2})\;dy}
	\right )^{2}\left (\frac{d}{dx}\mathbb{E}(\lambda _{1}+\lambda _{2}|
	\lambda _{1}=x,\lambda _{2}\leq c_{2})\right )
	\\
	=(\mathbb{E}(Z^{2})-x\mathbb{E}(Z))\left (-3\tau \mathbb{E}\left (Z^{2}
	\right )-2x\mathbb{E}\left (Z\right )+\tau x^{2}+
	\frac{\tau c_{2}(c_{2}^{2}-x^{2})e^{-\frac{(c_{2}-m)^{2}}{2\sigma ^{2}}}}{\int _{0}^{c_{2}}\exp (-(y-m)^{2}/(2\sigma ^{2}))\;dy}
	\right )
	\\
	-\left (\mathbb{E}(Z^{3})-x^{2}\mathbb{E}(Z)\right )\left (-(2\tau +1)
	\mathbb{E}(Z)+\tau x+
	\frac{\tau c_{2}(c_{2}-x)e^{-\frac{(c_{2}-m)^{2}}{2\sigma ^{2}}}}{\int _{0}^{c_{2}}\exp (-(y-m)^{2}/(2\sigma ^{2}))\;dy}
	\right ).
	\end{multline*}
	This expression can be rearranged to take the form
	\begin{align*}
	&\underbrace{(2\tau +1)\mathbb{E}\left (Z^{3}\right )\mathbb{E}\left (Z\right )-3\tau \mathbb{E}\left (Z^{2}\right )^{2}}_{A}+
	\underbrace{\tau \mathbb{E}\left (c_{2}^{2}Z^{2}(c_{2}-Z)\right )\frac{e^{-\frac{(c_{2}-m)^{2}}{2\sigma ^{2}}}}{\int _{0}^{c_{2}}e^{-\frac{(y-m)^{2}}{2\sigma ^{2}}}\;dy}}_{B}
	\\
	&-x\left (
	\underbrace{(2-3\tau )\mathbb{E}\left (Z^{2}\right )\mathbb{E}\left (Z\right )+\tau \mathbb{E}\left (Z^{3}\right )}_{C}+
	\underbrace{\tau \mathbb{E}\left (c_{2}Z\left (c_{2}^{2}-Z^{2}\right )\right )\frac{e^{-\frac{(c_{2}-m)^{2}}{2\sigma ^{2}}}}{\int _{0}^{c_{2}}e^{-\frac{(y-m)^{2}}{2\sigma ^{2}}}\;dy}}_{D}
	\right )
	\\
	&+x^{2}\left (
	\underbrace{\tau \mathbb{E}\left (Z^{2}\right )+(1-2\tau )\mathbb{E}\left (Z\right )^{2}}_{E}+
	\underbrace{\tau c_{2}\mathbb{E}\left (Z\left (c_{2}-Z\right )\right )\frac{e^{-\frac{(c_{2}-m)^{2}}{2\sigma ^{2}}}}{\int _{0}^{c_{2}}e^{-\frac{(y-m)^{2}}{2\sigma ^{2}}}\;dy}}_{F}
	\right ).
	\end{align*}
	Then, since $x<0$, we need only verify that each of the terms $A$-$F$ are
	non-negative. We show this by using two facts: first, that
	$0\leq Z\leq c_{2}$ (which is true by definition) and second, that
	$0\leq \tau \leq 1$ (which holds because
	$\tau =(\chi ^{2}-1)/(3-\chi ^{2})$ and Assumption~\ref{a:Monotonicity}
	implies that $1\leq \chi ^{2}\leq 2$).
	
	These two facts immediately show that $B$, $D$ and $F$ are non-negative.
	Furthermore
	\begin{align*}
	A&\geq (2\tau +1)\left (\mathbb{E}\left (Z^{3}\right )\mathbb{E}(Z)-
	\mathbb{E}\left (Z^{2}\right )^{2}\right )
	\\
	C&=2(1-\tau )\mathbb{E}\left (Z^{2}\right )\mathbb{E}\left (Z\right )+
	\tau \mathrm{Cov}\left (Z^{2},Z\right )
	\\
	E&=(1-\tau )\mathbb{E}\left (Z\right )^{2}+\tau \mathrm{Var}(Z)
	\end{align*}
	(where the first inequality uses $\tau \leq 1$). Applying the Cauchy-Schwarz
	inequality to $Z^{2}=Z^{3/2}Z^{1/2}$ shows that $A$ is non-negative. Using
	the fact that $Z\geq 0$ (so that $Z^{2}$ is an increasing function of
	$Z$) implies that $C\geq 0$. Since $\tau \in [0,1]$ we see that
	$E\geq 0$.
	
	Using Gromov's theorem in the same way as above, shows that in order to
	verify \eqref{e:Eigenvalue derivation 2} for $A_{\ell }$ of the form
	$\{\lambda _{2}\leq c_{2}\}$ or $\{\lambda _{1}\leq c_{1}\}$ it is enough
	to show that
	$\mathbb{E}(\lambda _{1}+\lambda _{2}|\lambda _{2}=c_{2})$ and
	$\mathbb{E}(\lambda _{1}+\lambda _{2}|\lambda _{1}=c_{1})$ are non-decreasing
	in $c_{2}>0$ and $c_{1}<0$ respectively. This can be proven using a near
	identical calculation to that for
	$\frac{d}{dx}\mathbb{E}(\lambda _{1}+\lambda _{2}|\lambda _{1}=x,
	\lambda _{2}\leq c_{2})$ (the only change is the region on which $Z$ is
	truncated, which means there will be no terms analogous to $B$, $D$ and
	$F$ above). This completes the proof of the lemma.
\end{proof}
\begin{remark}%
	\label{r:general monotonicity conditions}
	In Assumption~\ref{a:Monotonicity}, we impose the condition that
	$\chi \geq 1$. The only point in this paper at which we use this condition
	is in the proof of Lemma~\ref{l:General field finite stochastic decreasing},
	in order to show that
	\begin{displaymath}
	\mathbb{E}\left (\lambda _{1}+\lambda _{2}|\lambda _{1}=x,\lambda _{2}
	\leq c_{2}\right ),\quad \mathbb{E}\left (\lambda _{1}+\lambda _{2}|
	\lambda _{1}=x\right ),\quad \text{and}\quad \mathbb{E}\left (\lambda _{1}+
	\lambda _{2}|\lambda _{2}=x\right )
	\end{displaymath}
	are non-decreasing in $x$ (for all $c_{2}\geq 0$). Therefore, if an alternative
	method was found to verify this property (or to verify that \eqref{e:Eigenvalue derivation 2} holds for $A_{\ell }$ of the form
	$\{\lambda _{1}\leq c_{1}\}$, $\{\lambda _{2}\leq c_{2}\}$ and
	$\{\lambda _{1}\leq c_{1},\lambda _{2}\leq c_{2}\}$) for fields with
	$\chi <1$, then our results (including Theorem~\ref{t:Monotonicity}) would
	also hold for such fields.
	
	We expect that it should be possible to extend our results in this way.
	In the proof of Lemma~\ref{l:General field finite stochastic decreasing}
	we use $\chi \geq 1$ (or equivalently, $\tau \geq 0$) to show that
	$A$-$F$ are non-negative. If we explicitly evaluate these terms using the
	higher order moments of a truncated normal distribution, then numerical
	calculations suggest that $A+B$, $C+D$ and $E+F$ are non-negative for all
	relevant values of $\tau $, (i.e.\ including negative values) which would
	be sufficient to prove this lemma in such cases. We do not attempt to prove
	this analytically, because the algebraic expressions involved in these
	calculation are quite long and we are primarily interested in the case
	of the BF field, for which $\tau =0$.
\end{remark}

\subsection{Proof of Theorem~\ref{t:Monotonicity}}

We now use Lemmas~\ref{l:RPW finite stochastic decreasing} and~\ref{l:General field finite stochastic decreasing}
to complete the proof of Theorem~\ref{t:Monotonicity}, treating the RPW
case and the general case simultaneously.

\begin{proof}[Proof of Theorem~\ref{t:Monotonicity}]
	We begin by fixing a realisation of $g$ and $\theta $. Let
	$A(\epsilon ,R)$ denote the annulus on the plane centred at the origin
	with inner radius $\epsilon $ and outer radius $R$. We discretise this
	region by considering the points with polar coordinates
	\begin{displaymath}
	\left (r_{i}^{(n)},\omega _{j}^{(n)}\right ):=(r_{i},\omega _{j}):=(
	\epsilon +i2^{-n}(R-\epsilon ),\theta +j2^{-n}2\pi )
	\end{displaymath}
	for $i,j=0,1,\dots ,2^{n}$. We consider these points as a graph by placing
	an edge between $(r_{i_{1}},\omega _{j_{1}})$ and
	$(r_{i_{2}},\omega _{j_{2}})$ if and only if
	$\lvert i_{1}-i_{2}\rvert +\lvert j_{1}-j_{2}\rvert =1$. We define a site
	percolation model by declaring the vertex $(r_{i},\omega _{j})$ open if
	$\tilde{f}_{\ell }(r_{i},\omega _{j})-\ell <0$ (so an edge is open precisely
	when both of its vertices are open). Let $S_{\epsilon ,R,n,\ell }$ denote
	the event that there is an open path between $(\epsilon ,\theta )$ and
	$(\epsilon ,\theta +\pi )$ in this percolation model.
	
	Let $S_{\epsilon ,R,\ell }$ denote the event that
	$\{\tilde{f}_{\ell }<\ell \}\cap A(\epsilon ,R)$ contains a path joining
	$(\epsilon ,\theta )$ to $(\epsilon ,\theta +\pi )$. We claim that with
	probability one,
	\begin{equation}
	\label{e:Monotonicity}
	\mathds{1}_{S_{\epsilon ,R,\ell }}=\lim _{n\to \infty }\mathds{1}_{S_{
			\epsilon ,R,n,\ell }}.
	\end{equation}
	Since $\tilde{f}_{\ell }$ has no critical points at level $\ell $ away from
	the origin (Lemma~\ref{l:no four arm saddles conditional field}), the level
	set $\{\tilde{f}_{\ell }=\ell \}\cap A(\epsilon ,R)$ consists of
	$C^{2+\eta }$ curves. So in particular, if there exists a path in
	$\{\tilde{f}_{\ell }<\ell \}\cap A(\epsilon ,R)$ joining
	$(\epsilon ,\theta )$ to $(\epsilon ,\theta +\pi )$, then for $n$ sufficiently
	large we may assume this path lies on the graph with vertices
	$(r_{i}^{(n)},\omega _{j}^{(n)})$ as defined above. Hence
	$\mathds{1}_{S_{\epsilon ,R,\ell }}\leq \liminf _{n}\mathds{1}_{S_{
			\epsilon ,R,n,\ell }}$. If there is no path in
	$\{\tilde{f}_{\ell }<\ell \}\cap A(\epsilon ,R)$ joining
	$(\epsilon ,\theta )$ to $(\epsilon ,\theta +\pi )$ then there are three
	possibilities: (1) $\tilde{f}_{\ell }-\ell $ is non-negative at
	$(\epsilon ,\theta )$ or $(\epsilon ,\theta +\pi )$; (2) there exists a
	path in $\{\tilde{f}_{\ell }\geq \ell \}\cap A(\epsilon ,R)$ joining
	$(\epsilon ,\omega _{i})$ to $(\epsilon ,\omega _{j})$ for some
	$\omega _{i}\in (\theta ,\theta +\pi )$ and
	$\omega _{j}\in (\theta -\pi ,\theta )$, (here we note that by Lemma~\ref{l:no four arm saddles conditional field},
	$\tilde{f}_{\ell }|_{\partial B(\epsilon )}$ has no local extrema at level
	$\ell $ and we assume that $n$ is sufficiently large to find such
	$\omega _{i},\omega _{j}$); or (3) there exist two paths in
	$\{\tilde{f}_{\ell }\geq \ell \}\cap A(\epsilon ,R)$ which join
	$(\epsilon ,\omega _{i})$ and $(\epsilon ,\omega _{j})$ respectively to
	$\partial B(R)$ for $\omega _{i},\omega _{j}$ as before (See Figure~\ref{Fig_8}).
	In this case, by Lemma~\ref{l:no four arm saddles conditional field} we
	may assume that the paths intersect $\partial B(R)$ at different points.
	\begin{figure}[h!]
		\centering
		\begin{subfigure}[t]{0.45\textwidth}
			%%LEAP%%\input{Fig_8a.tikz}
			\begin{tikzpicture}[scale=0.1]
			\draw[fill=gray,fill opacity=0.5] plot[smooth cycle, tension=.7] coordinates {(0,0)(-3,8)(0,12)(6,14)(15,12)(20,0)(17,-5)(12,-10)(4,-15)(2,-5)(0,0)(3,-2)(7,-7)(12,-4)(14,0)(13,5)(7,7)};
			\draw[dashed,fill=white] (0,0) circle (5);
			\draw[dashed] (0,0) circle (30);
			\node[right] at (0:5) {$(\epsilon,\theta)$};
			\fill (0:5) circle (10pt);
			\fill (180:5) circle (10pt);
			\node[left] at (180:5) {$(\epsilon,\theta+\pi)$};
			\draw (90:5)--(135:8);
			\node[left] at (135:8) {$(\epsilon,\omega_i)$};
			\fill (90:5) circle (10pt);
			\draw (-50:5)--(-120:8);
			\node[left] at (-120:8) {$(\epsilon,\omega_j)$};
			\fill (-50:5) circle (10pt);
			\draw (0,16)--(2,12);
			\node[above] at (0,16) {$\left\{\tilde{f}_\ell\geq\ell\right\}$};
			\node[left,above] at (135:32) {$A_{\epsilon,R}$};
			\draw[fill=gray,fill opacity=0.5] plot[smooth cycle, tension=.7] coordinates {(-18,13)(-15,13)(-15,10)(-17,6)(-19,5)};
			\end{tikzpicture}
			\caption{}%
			\label{Fig_8a}
		\end{subfigure}
		\begin{subfigure}[t]{0.45\textwidth}
			%%LEAP%%\input{Fig_8b.tikz}
			\begin{tikzpicture}[scale=0.1]
			\node[left,above] at (135:32) {$A_{\epsilon,R}$};
			\draw[dashed] (0,0) circle (30);
			\clip circle (30);
			\draw[fill=gray,fill opacity=0.5] plot[smooth cycle, tension=.7] coordinates {(0,0)(-3,8)(-5,12)(-4,17)(-4,22)(-5,27)(-6,33)(0,32)(7,30)(12,28)(17,27)(16,25)(14,23)(12,20)(9,17)(7,14)};
			\draw[fill=gray,fill opacity=0.5] plot[smooth cycle, tension=.7] coordinates {(17,-5)(12,-10)(4,-15)(2,-5)(0,0)(3,-2)(7,-7)(12,-4)(16,-3)(22,-3)(26,-4)(30,-5)(32,-9)(27,-9)(23,-8)};
			\draw[dashed,fill=white] (0,0) circle (5);
			\node[right] at (0:5) {$(\epsilon,\theta)$};
			\fill (0:5) circle (10pt);
			\fill (180:5) circle (10pt);
			\node[left] at (180:5) {$(\epsilon,\theta+\pi)$};
			\draw (90:5)--(135:8);
			\node[left] at (135:8) {$(\epsilon,\omega_i)$};
			\fill (90:5) circle (10pt);
			\draw (-50:5)--(-120:8);
			\node[left] at (-120:8) {$(\epsilon,\omega_j)$};
			\fill (-50:5) circle (10pt);
			\draw (-11,15)--(0,13);
			\node[above] at (-11,15) {$\left\{\tilde{f}_\ell\geq\ell\right\}$};
			\draw[fill=white] plot[smooth cycle, tension=.7] coordinates {(0,20)(4,23)(4,20)};
			\end{tikzpicture}
			\caption{}%
			\label{Fig_8b}
		\end{subfigure}
		\caption{Two of the three ways in which $S_{\epsilon ,R,\ell }$ can fail,
			corresponding to cases (2) and (3) above respectively.}%
		\label{Fig_8}
	\end{figure}
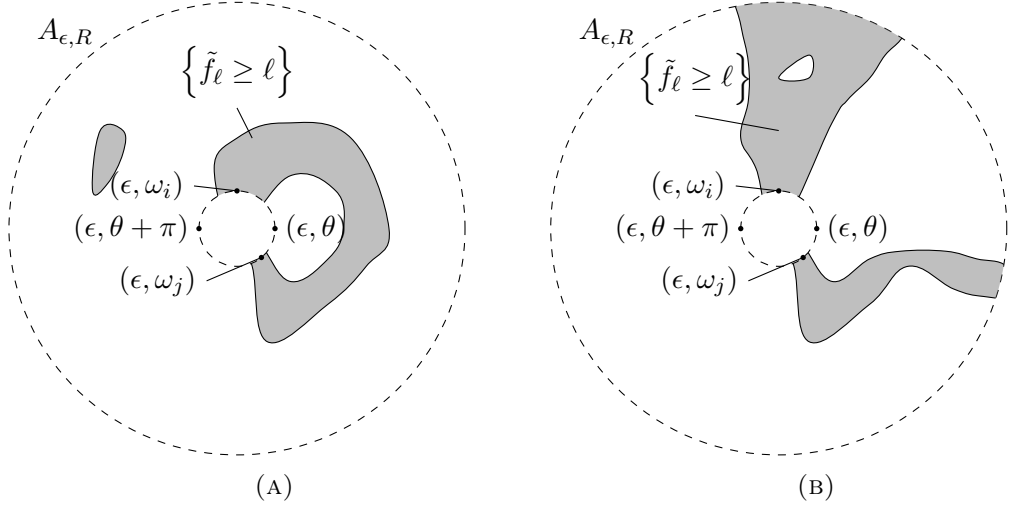
	
	In each of these cases, for all $n$ large enough we can construct corresponding
	paths on the discrete lattice as above which block a discrete path from
	joining $(\epsilon ,\theta )$ to $(\epsilon ,\theta +\pi )$ in
	$\{\tilde{f}_{\ell }<\ell \}$ and so $S_{\epsilon ,R,n,\ell }$ cannot occur
	for sufficiently large $n$. Therefore
	$\mathds{1}_{S_{\epsilon ,R,\ell }}\geq \limsup _{n}\mathds{1}_{S_{
			\epsilon ,R,n,\ell }}$, completing the proof of the claim.
	
	Since $S_{\epsilon ,R,n,\ell }$ depends on only finitely many points of
	$\tilde{f}_{\ell }$ and is a decreasing event, by Lemma~\ref{l:RPW finite stochastic decreasing}
	for the RPW and Lemma~\ref{l:General field finite stochastic decreasing}
	for general fields
	\begin{displaymath}
	\mathbb{P}\left (S_{\epsilon ,R,n,\ell _{1}}|g,\theta \right )\leq
	\mathbb{P}\left (S_{\epsilon ,R,n,\ell _{2}}|g,\theta \right )
	\end{displaymath}
	for any $\ell _{1}<\ell _{2}$. Then by \eqref{e:Monotonicity} and the bounded
	convergence theorem
	\begin{equation}
	\label{e:Monotonicity 2}
	\mathbb{P}\left (S_{\epsilon ,R,\ell _{1}}|g,\theta \right )\leq
	\mathbb{P}\left (S_{\epsilon ,R,\ell _{2}}|g,\theta \right ).
	\end{equation}
	
	Now let $S_{R,\ell }$ be the event that $\tilde{f}_{\ell }$ has an $R$-lower
	connected saddle point at the origin. Conditional on $\theta $, if this
	event occurs then so must $S_{\epsilon ,R,\ell }$ for $\epsilon $ sufficiently
	small. Conversely, if the saddle point at the origin is not $R$-lower connected,
	then it must be four-arm in $B(R)$ or $R$-upper connected. In both of these
	cases, $S_{\epsilon ,R,\ell }$ cannot occur for $\epsilon $ sufficiently
	small. We conclude that
	$\mathds{1}_{S_{R,\ell }}=\lim _{\epsilon \to 0}\mathds{1}_{S_{
			\epsilon ,R,\ell }}$ and by applying the bounded convergence theorem to \eqref{e:Monotonicity 2} we see that
	\begin{equation}
	\label{e:Monotonicity 3}
	\mathbb{P}\left (S_{R,\ell _{1}}|g,\theta \right )\leq \mathbb{P}
	\left (S_{R,\ell _{2}}|g,\theta \right ).
	\end{equation}
	
	Finally we let $S_{\ell }$ be the event that $\tilde{f}_{\ell }$ has a lower
	connected saddle point at the origin and note that trivially
	$S_{\ell }=\cup _{R}S_{R,\ell }$. Applying this to \eqref{e:Monotonicity 3} shows that
	\begin{displaymath}
	\mathbb{P}\left (S_{\ell _{1}}|g,\theta \right )\leq \mathbb{P}\left (S_{
		\ell _{2}}|g,\theta \right ).
	\end{displaymath}
	Integrating over realisations of $g$ and $\theta $ implies that
	$\mathbb{P}(S_{\ell _{1}})\leq \mathbb{P}(S_{\ell _{2}})$ and so by definition
	(see the proof of Theorem~\ref{t:Differentiability equivalence})
	\begin{displaymath}
	\frac{p_{s^{-}}^{*}(\ell _{1})}{p_{s}(\ell _{1})}\leq
	\frac{p_{s^{-}}^{*}(\ell _{2})}{p_{s}(\ell _{2})}.
	\end{displaymath}
	A near identical argument shows that
	$p_{s^{+}}^{*}(\ell )/p_{s}(\ell )$ is non-increasing in $\ell $.
\end{proof}

\subsection{Remaining results}

We now prove the remaining results stated in Section~\ref{ss:Monotonicity},
namely Corollaries~\ref{c:Twice differentiable} and~\ref{c:Differentiability equivalence}
and Propositions~\ref{p:RPW monotonicity}--\ref{p:Positivity RPW}.

\begin{proof}[Proof of Corollary~\ref{c:Twice differentiable}]
	Since $p_{s^{-}}^{*}/p_{s}$ is monotone it has at most a countable number
	of discontinuities, all of which are jump discontinuities. By the continuity
	of $p_{s}$, the same is true of $p_{s^{-}}^{*}$. Since $c_{ES}$ is absolutely
	continuous (Theorem~\ref{t:integral equality}) it is differentiable almost
	everywhere (see \cite[Theorem 7.18]{rudin1987real}) with derivative
	$p_{s^{-}}^{*}-p_{m^{+}}$. The density $p_{m^{+}}$ is derived explicitly
	in~\cite{cheng2015expected} and is continuously differentiable. It also
	follows from monotonicity that $p_{s^{-}}^{*}/p_{s}$ is differentiable
	almost everywhere, and since $p_{s}$ is smooth (again, from
	\cite{cheng2015expected}) the same is true of $p_{s^{-}}^{*}$, thus showing
	that $c_{ES}$ is twice differentiable almost everywhere. A similar proof
	applies to $c_{LS}$.
\end{proof}

\begin{proof}[Proof of Corollary~\ref{c:Differentiability equivalence}]
	Since the equivalence of $(2)$--$(4)$ follows from Theorem~\ref{t:integral equality},
	and $(1)$ implies $(2)$ by Theorem~\ref{t:Differentiability equivalence},
	it remains to show that $(2)$ implies $(1)$. Now suppose there exists a
	version of $p_{s^{-}}$ which is continuous on $(a,b)$, denoted
	$\tilde{p}_{s^{-}}$. Then
	$\tilde{p}_{s^{-}}/p_{s}=p_{s^{-}}^{*}/p_{s}$ almost everywhere, and since
	the former is continuous and the latter is monotone, this equality must
	hold pointwise on $(a,b)$, so $p_{s^{-}}^{*}$ is continuous on
	$(a,b)$. We note that $\tilde{p}_{s^{+}}:=p_{s}-\tilde{p}_{s^{-}}$ defines
	a continuous version of $p_{s^{+}}$ and arguing as above then shows that
	$p_{s^{+}}^{*}$ is continuous on $(a,b)$. Therefore the almost everywhere
	equality $p_{s^{-}}^{*}+p_{s^{+}}^{*}=p_{s}$ is in fact true for all points
	in $(a,b)$, and by \eqref{e:Differentiability equivalent 1}
	$\tilde{f}_{\ell }$ almost surely has no infinite four-arm saddle at the
	origin for all $\ell \in (a,b)$.
\end{proof}

\begin{proof}[Proof of Proposition~\ref{p:Positivity RPW}]
	We use the `barrier method', that is, we show that the probability of having
	at least one component of $\{f\geq \ell \}$ contained in $B(r)$ is strictly
	positive for some fixed $r>0$. By linearity of expectation and stationarity
	of $f$, this shows that
	$\liminf _{R\to \infty }\mathbb{E}(N_{ES}(R,\ell ))/R^{2}>0$, so in particular
	$c_{ES}(\ell )>0$.
	
	It is known that the RPW has the orthogonal expansion
	\begin{displaymath}
	f(x)=\sum _{m\in \mathbb{Z}} a_{m} J_{\lvert m\rvert }(r)e^{im\theta }
	\end{displaymath}
	where $(r,\theta )$ represents $x$ in polar coordinates, $J_{m}$ is the
	$m$-th Bessel function and $a_{m}=b_{m}+ic_{m}=\overline{a_{-m}}$ with
	$b_{0}$, $(\sqrt{2}b_{m})_{m\in \mathbb{N}}$ and $(\sqrt{2}c_{m})_{m\in \mathbb{N}}$ independent standard (real) Gaussians and
	$c_{0}=0$. (This function is clearly Gaussian and can be shown to have
	the correct covariance structure using Graf's addition theorem for Bessel
	functions.) Let $r$ be the minimiser of $J_{0}$, so $r\approx 3.83$ and
	$J_{0}(r)<-0.4$. We note that by considering the power series for the Bessel
	functions, it can be shown that for $x\in [0,4]$,
	$\lvert J_{m}(x)\rvert \leq e^{4}(2^{m}/m!)$. Finally we note that
	$J_{m}$ is bounded in absolute value by $1$ for any $m$. Now consider the
	event that
	\begin{displaymath}
	a_{0}>\min \{\lvert \ell \rvert ,1\}\quad \text{and}\quad \lvert a_{-1}
	\rvert ,\lvert a_{-2}\rvert ,\lvert a_{1}\rvert ,\lvert a_{2}\rvert
	\leq C_{1}\quad \text{and}\quad \forall \lvert m\rvert >2,\; \lvert a_{m}
	\rvert \leq C_{2}(m!)/4^{m}.
	\end{displaymath}
	It is easily seen that this event has positive probability, and for appropriately
	chosen constants $C_{1},C_{2}>0$, we see that on this event
	$f(0)>\ell $ and $f(x)<\ell $ for any $x$ such that
	$\lvert x\rvert =r$. Therefore $f$ has a component of
	$\{f\geq \ell \}$ contained in $B(r)$ with positive probability, completing
	the proof of the result.
\end{proof}

\begin{proof}[Proof of Proposition~\ref{p:RPW monotonicity}]
	By Corollary~\ref{c:Differentiability equivalence} we may take
	$p_{s^{-}}(\ell )/p_{s}(\ell )$ to be non-decreasing. In
	\cite{cheng2015expected} it is shown that for the RPW
	\begin{align*}
	p_{m^{+}}(x)&=\frac{1}{4\sqrt{2}\pi ^{3/2}}\left ((x^{2}-1)e^{-
		\frac{x^{2}}{2}}+e^{-\frac{3x^{2}}{2}}\right )\mathds{1}_{x\geq 0}
	\\
	p_{s}(x)&=\frac{1}{4\sqrt{2}\pi ^{3/2}}e^{-\frac{3x^{2}}{2}}.
	\end{align*}
	In particular, $p_{m^{+}}(x)=0$ for $x<0$, so by Theorem~\ref{t:integral equality}
	for $\ell ^{\prime }<\ell \leq 0$
	\begin{displaymath}
	c_{ES}(\ell ^{\prime })-c_{ES}(\ell )=\int _{\ell ^{\prime }}^{\ell }-p_{s^{-}}(x)
	\;dx.
	\end{displaymath}
	Taking $\ell ^{\prime }\to -\infty $ shows that for $\ell <0$
	\begin{displaymath}
	c_{ES}(\ell )=\int _{-\infty }^{\ell }p_{s^{-}}(x)\;dx.
	\end{displaymath}
	By Proposition~\ref{p:Positivity RPW} this is positive for every
	$\ell <0$, so in particular there must exist arbitrarily negative
	$x$ such that $p_{s^{-}}(x)>0$. Since
	$p_{s^{-}}(\ell )/p_{s}(\ell )$ is non-decreasing, we conclude that
	$p_{s^{-}}$ is strictly positive for all $\ell \in \mathbb{R}$. Since
	$p_{s^{-}}(x)=p_{s^{+}}(-x)$ we also see that $p_{s^{+}}(x)>0$ for all
	$x$ and since $p_{s^{-}}+p_{s^{+}}=p_{s}$ we see that
	$0<p_{s^{-}}(x)/p_{s}(x)<1$ for all $x\in \mathbb{R}$. Finally, we note
	that there must exist a sequence $\ell _{n}>0$ with $\ell _{n}\to 0$ such
	that $p_{s^{-}}(\ell _{n})/p_{s}(\ell _{n})\geq 1/2$ for all~$n$. Indeed,
	if this were not true, by monotonicity, there would exist a neighbourhood
	of $0$ on which $p_{s^{-}}/p_{s}<1/2$ and by symmetry
	$p_{s^{+}}/p_{s}<1/2$ on a possibly smaller neighbourhood, but then there
	would exist a set of positive measure on which
	$p_{s^{-}}+p_{s^{+}}<p_{s}$ giving a contradiction.
	
	For $\ell ^{\prime }\leq \ell $ and $\epsilon >0$
	\begin{align*}
	\frac{1}{\epsilon }\int _{\ell }^{\ell +\epsilon }
	\frac{p_{s^{-}}(\ell ^{\prime })}{p_{s}(\ell ^{\prime })}p_{s}(x)-p_{m^{+}}(x)
	\;dx&\leq \frac{1}{\epsilon }\int _{\ell }^{\ell +\epsilon }p_{s^{-}}(x)-p_{m^{+}}(x)
	\;dx
	\\
	&\leq \frac{1}{\epsilon }\int _{\ell }^{\ell +\epsilon }
	\frac{p_{s^{-}}(\ell +\epsilon )}{p_{s}(\ell +\epsilon )}p_{s}(x)-p_{m^{+}}(x)
	\;dx.
	\end{align*}
	By Theorem~\ref{t:integral equality} and continuity of $p_{s}$ we therefore
	see that
	\begin{displaymath}
	\frac{p_{s^{-}}(\ell ^{\prime })}{p_{s}(\ell ^{\prime })}p_{s}(\ell )-p_{m^{+}}(
	\ell )\leq D_{+}c_{ES}(\ell )\leq D^{+}c_{ES}(\ell )\leq
	\frac{p_{s^{-}}(\ell +\epsilon )}{p_{s}(\ell +\epsilon )}p_{s}(\ell )-p_{m^{+}}(
	\ell ).
	\end{displaymath}
	Since $p_{s^{-}}/p_{s}<1$, evaluating the final inequality using the explicit
	forms of $p_{s}$ and $p_{m^{+}}$ shows that $D^{+}c_{ES}(\ell )<0$ whenever
	$\ell \geq 1$. Since $p_{s^{-}}>0$ and $p_{m^{+}}(\ell )=0$ for
	$\ell \leq 0$, taking $\ell ^{\prime }=\ell $ in the first inequality shows
	that $D_{+}c_{ES}(\ell )>0$ for $\ell \leq 0$. If $\ell >0$ then we may
	take $\ell ^{\prime }=\ell _{n}$ as defined above for sufficiently large
	$n$. Then by evaluating the densities explicitly we see that
	$1/2p_{s}(\ell )-p_{m^{+}}(\ell )>0$ for $\ell \in (0,0.876]$ thus completing
	the proof of the statements for $c_{ES}$.
	
	Since $p_{m^{-}}(x)=0$ for $x>0$, we see from Theorem~\ref{t:integral equality}
	that
	\begin{displaymath}
	\frac{c_{LS}(\ell +\epsilon )-c_{LS}(\ell )}{\epsilon }=
	\frac{c_{ES}(\ell +\epsilon )-c_{ES}(\ell )}{\epsilon }-
	\frac{1}{\epsilon }\int _{\ell }^{\ell +\epsilon }p_{s^{+}}(x)\;dx
	\end{displaymath}
	for $\ell >0$. As $D^{+}c_{ES}(\ell )<0$ for $\ell \geq 1$ and
	$p_{s^{+}}\geq 0$, taking the limit superior here shows that
	$D^{+}c_{LS}(\ell )<0$ (for $\ell \geq 1$).
\end{proof}

\begin{proof}[Proof of Propositions~\ref{p:BF monotonicity} and~\ref{p:Isotropic monotonicity}]
	By Theorem~\ref{t:differentiability of c_{LS}}, both $c_{ES}$ and
	$c_{LS}$ are differentiable and so by Theorem~\ref{t:integral equality}
	\begin{equation}
	\label{e:Isotropic monotonicity}
	\begin{aligned}
	c_{ES}^{\prime }(\ell )&=p_{s^{-}}(\ell )-p_{m^{+}}(\ell )\leq p_{s}(
	\ell )-p_{m^{+}}(\ell )
	\\
	c_{LS}^{\prime }(\ell )&=p_{m^{-}}(\ell )+p_{s^{-}}(\ell )-p_{m^{+}}(
	\ell )-p_{s^{+}}(\ell )\leq p_{m^{-}}(\ell )+p_{s}(\ell )-p_{m^{+}}(
	\ell ).
	\end{aligned}
	\end{equation}
	The densities $p_{m^{-}}$, $p_{s}$ and $p_{m^{+}}$ were derived for isotropic
	fields satisfying (a weaker version of) Assumption~\ref{a:minimal} in
	\cite{cheng2015expected}. In the proof of
	\cite[Corollary~1.19]{Beliaev2018Number} it is shown that both right hand
	expressions in \eqref{e:Isotropic monotonicity} are strictly negative whenever
	$\ell >\sqrt{2}/\chi $ (with $\chi $ defined prior to the statement of
	this proposition). We note that this is a sufficient condition for the
	derivatives to be negative, chosen for its simplicity. For many fields,
	the derivatives will be negative on a larger region and this can be found
	by using the densities specified in \cite{cheng2015expected} with the appropriate
	value of $\chi $. Specifically, these densities are given in terms of
	$\chi $ and $\xi ^{2}:=-k^{\prime }(0)/k^{\prime \prime }(0)$ by
	\begin{align*}
	p_{m^{+}}(x)=p_{m^{-}}(-x)&=\frac{1}{\pi \xi ^{2}}\Bigg (\chi ^{2}(x^{2}-1)
	\phi (x)\Phi \left (\frac{\chi x}{\sqrt{2-\chi ^{2}}}\right )+
	\frac{\chi x\sqrt{2-\chi ^{2}}}{2\pi }e^{-\frac{x^{2}}{2-\chi ^{2}}}
	\\
	&\qquad \qquad \qquad \qquad +
	\frac{\sqrt{2}}{\sqrt{\pi (3-\chi ^{2})}}e^{-
		\frac{3x^{2}}{2(3-\chi ^{2})}}\Phi \left (
	\frac{\chi x}{\sqrt{(3-\chi ^{2})(2-\chi ^{2})}}\right )\Bigg )
	\\
	p_{s}(x)&=\frac{1}{\pi \xi ^{2}}
	\frac{\sqrt{2}}{\sqrt{\pi (3-\chi ^{2})}}e^{-
		\frac{3x^{2}}{2(3-\chi ^{2})}}
	\end{align*}
	where $\phi $ and $\Phi $ denote the standard normal probability density
	and cumulative density respectively. For the Bargmann-Fock field, (for
	which $\chi =1$) substituting these densities into \eqref{e:Isotropic monotonicity} shows that
	$c_{ES}^{\prime }(\ell ),c_{LS}^{\prime }(\ell )<0$ for $\ell \geq 1.03$ improving
	on the general bound $\ell >\sqrt{2}/\chi =\sqrt{2}$.
	
	Finally we note that $c_{ES}^{\prime }(0)=p_{s^{-}}(0)-p_{m^{+}}(0)$, and
	by the identities $p_{s^{-}}(x)=p_{s^{+}}(-x)$,
	$p_{s^{-}}+p_{s^{+}}=p_{s}$ almost everywhere and the fact these densities
	are all continuous, we see that $p_{s^{-}}(0)=p_{s}(0)/2$. Evaluating the
	densities given in \cite{cheng2015expected} at zero shows that
	$p_{s}(0)/2>p_{m^{+}}(0)$ so we conclude that $c_{ES}^{\prime }(0)>0$. Since
	$c_{ES}$ is continuously differentiable, we can extend this to a neighbourhood
	of the origin.
	
	By Theorem~\ref{t:Monotonicity}, $p_{s^{-}}(\ell )/p_{s}(\ell )$ is non-decreasing
	and so for $\ell >0$
	\begin{displaymath}
	\frac{p_{s^{-}}(\ell )}{p_{s}(\ell )}\geq \frac{p_{s-}(0)}{p_{s}(0)}=
	\frac{1}{2}.
	\end{displaymath}
	Therefore
	$c_{ES}^{\prime }(\ell )\geq p_{s}(\ell )/2-p_{m^{+}}(\ell )$ for
	$\ell \geq 0$. Evaluating the densities above then gives an explicit constant
	$C$ such that this expression is strictly positive for $\ell \leq C$. In
	the case of the Bargmann-Fock field, $C=0.64$.
\end{proof}

\appendix

\section{Non-degeneracy}
\label{a:nondeg}

We verify some claims about the non-degeneracy of Gaussian fields:

\begin{lemma}
	\label{a:nondegen}
	Let $f$ be a $C^{2}$, stationary, planar Gaussian field. Then the spectral
	measure $\mu $ being supported on the union of two lines through the origin
	is equivalent to the Gaussian vector $\nabla ^{2} f(0)$ being degenerate.
\end{lemma}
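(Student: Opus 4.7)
My plan is to identify the degeneracy of $\nabla^{2} f(0)$ with the vanishing of a real homogeneous quadratic form on the support of $\mu$, and then to invoke the elementary classification of the real zero locus of a binary quadratic.

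First I would express the $3\times 3$ covariance matrix of the Gaussian vector $(\partial_{11} f(0), \partial_{22} f(0), \partial_{12} f(0))$ via the spectral representation $K(x) = \int e^{i t \cdot x}\, d\mu(t)$. Since $f \in C^{2}$, the measure $\mu$ has finite fourth moments, so differentiation under the integral sign is justified and gives
\[
\mathrm{Cov}\bigl(\partial_{ij} f(0), \partial_{kl} f(0)\bigr) = \partial^{4}_{ijkl} K(0) = \int_{\mathbb{R}^{2}} t_{i} t_{j} t_{k} t_{l}\, d\mu(t).
\]
For $v = (a,b,c) \in \mathbb{R}^{3}$, introduce the quadratic form $Q_{v}(t) := a t_{1}^{2} + b t_{2}^{2} + c t_{1} t_{2}$. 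Expanding the square shows
\[
\mathrm{Var}\bigl(a \partial_{11} f(0) + b \partial_{22} f(0) + c \partial_{12} f(0)\bigr) = \int_{\mathbb{R}^{2}} Q_{v}(t)^{2}\, d\mu(t).
\]
Hence $\nabla^{2} f(0)$ is degenerate if and only if some nonzero $v$ makes the right-hand side vanish, i.e.\ if and only if $\mu$ is supported on the real zero locus $\{Q_{v} = 0\}$.

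I would then apply the classification of the real zero set of a nonzero binary quadratic form. Via the discriminant $c^{2} - 4ab$, this zero set in $\mathbb{R}^{2}$ is either a union of two distinct lines through the origin (discriminant positive, $Q_{v}$ factoring into two distinct real linear forms), a single line through the origin (discriminant zero, $Q_{v}$ a perfect square), or just the origin $\{0\}$ (discriminant negative, $Q_{v}$ definite). In every case the zero locus is contained in a union of two lines through the origin. Conversely, given $\mu$ supported on $L_{1} \cup L_{2}$ with $L_{i} = \{t : \ell_{i}(t) = 0\}$ for nonzero real linear forms $\ell_{i}$, the product $\ell_{1} \ell_{2}$ is exactly a quadratic form of the shape $Q_{v}$ for some nonzero $v$, vanishes on $\mathrm{supp}(\mu)$, and so $\int Q_{v}^{2}\, d\mu = 0$, giving the Hessian's degeneracy by the variance identity above.

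No serious obstacle arises: the argument reduces to the spectral computation of the Hessian covariance together with a short case analysis of binary quadratics. The only point requiring care is the bookkeeping around degenerate configurations, namely that a single line and the singleton $\{0\}$ should both be interpreted as instances of a union of two lines through the origin (by letting the two lines coincide, or by noting that $\{0\}$ is contained in any such union); once this convention is fixed, both directions go through uniformly without further modification.
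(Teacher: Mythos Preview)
Your proposal is correct and follows essentially the same argument as the paper: both compute $\mathrm{Var}(a\cdot\nabla^{2}f(0))=\int Q_{a}(t)^{2}\,d\mu(t)$ via the spectral representation, and then identify degeneracy with $\mu$ being supported on the zero set of a nonzero binary quadratic form, which is a union of (at most) two lines through the origin. Your treatment is slightly more explicit about the discriminant case analysis and the degenerate configurations, but the method is identical.
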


\begin{proof}
	By \cite[Chapter 5]{RFG}, for any $s,t\in \mathbb{R}^{2}$ and
	$\alpha ,\beta ,\gamma ,\delta \in \mathbb{N}\cup \{0\}$ (such that the
	following derivatives are defined)
	\begin{displaymath}
	\mathbb{E}\left (
	\frac{\partial ^{\alpha +\beta }}{\partial t_{1}^{\alpha }\partial t_{2}^{\beta }}f(t)
	\overline{\frac{\partial ^{\gamma +\delta }}{\partial s_{1}^{\gamma }\partial s_{2}^{\delta }}f(s)}
	\right )=\int _{\mathbb{R}^{2}}(-ix_{1})^{\alpha }(-ix_{2})^{\beta }e^{-it
		\cdot x}\overline{(-ix_{1})^{\gamma }(-ix_{2})^{\delta }e^{-is\cdot x}}\;d
	\mu (x)
	\end{displaymath}
	where $\mu $ is the spectral measure of $f$. Then for
	${a}\in \mathbb{R}^{3}$,
	\begin{displaymath}
	\mathbb{E}\left (({a}\cdot \nabla ^{2} f(0))^{2}\right )=\int _{
		\mathbb{R}^{2}}\lvert a_{1} x_{1}^{2}+a_{2}x_{2}^{2}+a_{3}x_{1}x_{2}
	\rvert ^{2}\;d\mu (x).
	\end{displaymath}
	
	If $\nabla ^{2} f(0)$ is degenerate, then we may choose ${a}\neq {0}$ such
	that this expression is zero, and hence the integrand is identically zero
	on the support of $\mu $. Hence the support of $\mu $ is contained in the
	zero set of this binary quadratic form which is contained in the union
	of two lines through the origin.
	
	Conversely if the support of $\mu $ is contained in the union of two lines
	through the origin, then we may choose ${a}\neq {0}$ such that the zero
	set of $a_{1} x_{1}^{2}+a_{2}x_{2}^{2}+a_{3}x_{1}x_{2}$ is equal to this
	union. Hence the integral above will be zero and $\nabla ^{2} f(0)$ will
	be degenerate.
\end{proof}

\begin{lemma}
	\label{a:nondegen2}
	Let $f:\mathbb{R}^{2}\to \mathbb{R}$ be a Gaussian field which is stationary
	and centred with $\mathrm{Var}(f(0))=1$ and covariance function
	$K\in C^{4+\eta ^{\prime }}$. If the support of the spectral measure
	$\mu $ contains a centred ellipse (or circle), then Assumptions~\ref{a:minimal}
	and~\ref{a:non-degenerate gradient} hold. Moreover, if the support of
	$\mu $ contains an open set then, for any distinct
	$t_{1},\dots ,t_{n}\subset \mathbb{R}^{2}$, the vector
	\begin{displaymath}
	(f(t_{1}),\dots ,f(t_{n}),\nabla f(t_{1}),\dots ,\nabla f(t_{n}),
	\nabla ^{2} f(t_{1}),\dots ,\nabla ^{2} f(t_{n}))
	\end{displaymath}
	is non-degenerate (so in particular, Assumptions~\ref{a:minimal} and~\ref{a:non-degenerate gradient}
	hold).
\end{lemma}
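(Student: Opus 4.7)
The plan is to exploit the spectral representation $f(t) = \int e^{it\cdot s}\,dZ(s)$, under which any real linear combination $L$ of values of $f, \nabla f, \nabla^{2} f$ at prescribed points $t_{1},\ldots,t_{n}$ takes the form $L = \int P(s)\,dZ(s)$ for a symbol
\[
P(s) = \sum_{k=1}^{n}\bigl(a_{k}+ib_{k}\cdot s - Q_{k}(s)\bigr)e^{it_{k}\cdot s},
\]
where $Q_{k}$ is the homogeneous quadratic form in $s$ that encodes the Hessian coefficients at $t_{k}$. Because $\mathrm{Var}(L) = \int |P|^{2}\,d\mu$ and $P$ is continuous, $L=0$ almost surely is equivalent to $P\equiv 0$ on $\mathrm{supp}(\mu)$. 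Non-degeneracy of the Gaussian vectors in the statement therefore reduces to showing that $P\equiv 0$ on the relevant subset of $\mathrm{supp}(\mu)$ forces the prescribed coefficients to vanish.

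For the open-set case this is almost immediate. The symbol $P$ is a finite sum of polynomials times plane waves, hence extends to an entire function on $\mathbb{C}^{2}$; vanishing on an open subset of $\mathbb{R}^{2}$ therefore forces $P\equiv 0$ on all of $\mathbb{R}^{2}$. The classical linear independence of the system $\{s^{\alpha}e^{it_{k}\cdot s}\}_{|\alpha|\le 2,\,k}$ over $\mathbb{C}$ for distinct $t_{k}$ — provable by applying differential operators that isolate one exponential at a time — then forces $(a_{k},b_{k},e_{k})=0$ for each $k$, yielding non-degeneracy of the full vector.

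For the ellipse case, Assumption~\ref{a:minimal}(2) is immediate from Lemma~\ref{a:nondegen}, since a centred ellipse meets the union of two lines through the origin in at most four points. For the remaining conditions I parametrize the ellipse (after rotation to principal axes) as $s(\theta)=(R_{1}\cos\theta, R_{2}\sin\theta)$, so that $t\cdot s(\theta) = \rho\cos(\theta-\phi)$ with $\rho=\sqrt{R_{1}^{2}t_{1}^{2}+R_{2}^{2}t_{2}^{2}}>0$ whenever $t\ne 0$, and $b\cdot s(\theta) = \beta_{+}e^{i\theta}+\beta_{-}e^{-i\theta}$. The Jacobi–Anger expansion $e^{i\rho\cos(\theta-\phi)} = \sum_{n\in\mathbb{Z}}i^{n}J_{n}(\rho)e^{in(\theta-\phi)}$ then writes $P(s(\theta))$ as an explicit Fourier series in $\theta$. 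For Assumption~\ref{a:minimal}(3) the identity $(a+ib\cdot s)e^{ix\cdot s}=a-ic\cdot s$ on the ellipse, after taking moduli, yields $(b\cdot s)^{2}=(c\cdot s)^{2}$, and hence $b=\pm c$ (otherwise the identity would hold on at most four points of the ellipse). Each of the two sub-cases reduces to either $x\cdot s\in 2\pi\mathbb{Z}$ on the ellipse — forcing $x\cdot s\equiv 0$ and thus $x=0$ by the symmetry of the ellipse — or to an analytic identity $u+2\arctan(\lambda u/a)\equiv 0$ for $u=x\cdot s$ ranging over an interval, whose Taylor expansion at $u=0$ contradicts $x\ne 0$. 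For Assumption~\ref{a:non-degenerate gradient} the polynomial part of $P$ contributes only to Fourier modes $|n|\le 2$, so the modes $|n|\ge 3$ yield equations purely in the exponential part,
\[
aJ_{n}(\rho)+\beta_{+}J_{n-1}(\rho)e^{i\phi}-\beta_{-}J_{n+1}(\rho)e^{-i\phi}=0.
\]
Dividing by $J_{n-1}(\rho)$ and sending $n\to\pm\infty$ uses $J_{n\pm 1}(\rho)/J_{n-1}(\rho)\to 0$ to extract $\beta_{\pm}=0$, and then $aJ_{n}(\rho)=0$ for a suitable $n\ge 3$ with $J_{n}(\rho)\ne 0$ forces $a=0$.

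The main obstacle is the harmonic analysis on the ellipse for Assumption~\ref{a:non-degenerate gradient}: one must carefully track how each term of $P$ contributes to each Fourier mode in $\theta$ and leverage the Bessel asymptotics to extract the coefficients $(a,b)$. I emphasise that the argument neither does nor should conclude that the remaining coefficients $(c,d,e)$ vanish — when $\mathrm{supp}(\mu)$ is the circle itself, as for the Random Plane Wave, the genuine degeneracy $f(0)+f_{11}(0)+f_{22}(0)=0$ obtains — but it does isolate the coefficients $(a,b)$ attached to $(f(t),\nabla f(t))$, which is precisely what Assumption~\ref{a:non-degenerate gradient} demands.
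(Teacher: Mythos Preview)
Your reduction---non-degeneracy of the Gaussian vector is equivalent to the spectral symbol $P(s)$ not vanishing identically on the support of $\mu$---is exactly the paper's strategy. For the open-set case the two arguments coincide, and for Assumption~\ref{a:non-degenerate gradient} your Jacobi--Anger/Bessel-asymptotics route is a correct and considerably more explicit version of what the paper compresses into the single line ``by considering the real and imaginary parts explicitly, the zero set of this integrand cannot contain an ellipse/circle centred at the origin''. Your remark that one should \emph{not} expect all nine coefficients to vanish (witness the RPW identity $f(0)+f_{11}(0)+f_{22}(0)=0$) is well taken and matches the statement of Assumption~\ref{a:non-degenerate gradient}.

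There is, however, a genuine gap in your treatment of Assumption~\ref{a:minimal}(3), sub-case $b=c$. You write the resulting constraint as $u+2\arctan(\lambda u/a)\equiv 0$ with $u=x\cdot s$, which presupposes $b\cdot s=\lambda\,(x\cdot s)$ for some scalar~$\lambda$. Nothing forces $b$ and $x$ to be parallel in the ellipse-adapted coordinates, so this step is unjustified as written. The correct relation on the ellipse is $u(\theta)+2\arctan(v(\theta)/a)=0$ with $u=x\cdot s(\theta)$ and $v=b\cdot s(\theta)$ two \emph{a priori independent} sinusoids in~$\theta$. A clean repair (for $a\neq 0$, wlog $a>0$): pick $\theta_{0}$ with $u(\theta_{0})=0$, so $v(\theta_{0})=0$ as well; differentiating once in~$\theta$ gives $u'(\theta_{0})+2v'(\theta_{0})/a=0$, while differentiating three times and using $u'''=-u'$, $v'''=-v'$ yields
\[
-u'(\theta_{0})-\tfrac{2}{a}v'(\theta_{0})-\tfrac{4}{a^{3}}\bigl(v'(\theta_{0})\bigr)^{3}=0.
\]
Adding the two relations forces $v'(\theta_{0})=0$, and a sinusoid with a double zero is identically zero; hence $b=0$, reducing to the case you already handle. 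The residual case $a=0$ is easy: the identity becomes $(b\cdot s)(e^{ix\cdot s}+1)=0$ on the ellipse, and antipodal symmetry rules out $x\cdot s\equiv\pi\pmod{2\pi}$, forcing $b=0$.
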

We note that these results could be proven under much weaker conditions
on the support of the spectral measure using the arguments we give below.
We do not attempt to formulate the most general conditions.
\begin{proof}
	First consider the case that the support of $\mu $ contains an ellipse/circle.
	Let ${a}\in \mathbb{R}^{9}$ and
	\begin{displaymath}
	{w}:=(f(t),\nabla f(t),f(0),\nabla f(0),\nabla ^{2} f(0)).
	\end{displaymath}
	By the same arguments as in the proof of Lemma~\ref{a:nondegen}
	\begin{displaymath}
	\mathbb{E}\left (({a}\cdot {w})^{2}\right )=\int _{\mathbb{R}^{2}}
	\left \lvert {a}\cdot (-e^{-it\cdot x},ix_{1}e^{-it\cdot x},ix_{2}e^{-it
		\cdot x}, -1,ix_{1},ix_{2},x_{1}^{2},x_{1}x_{2},x_{2}^{2})\right
	\rvert ^{2}d\mu (x).
	\end{displaymath}
	If Assumption~\ref{a:non-degenerate gradient} does not hold, then there
	exists a choice of ${a}$ such that this expectation is zero and one of
	the first three elements of ${a}$ is non-zero. Hence the integrand above
	must be identically zero on the support of $\mu $. By considering the real
	and imaginary parts explicitly, the zero set of this integrand cannot contain
	an ellipse/circle centred at the origin and so neither can the support
	of $\mu $. By a near-identical argument, and Lemma~\ref{a:nondegen},
	$f$ also satisfies the non-degeneracy conditions of Assumption~\ref{a:minimal}.
	(The other conditions are satisfied by the premise of this lemma.)
	
	By a completely analogous argument we see that if
	\begin{displaymath}
	\left (f(t_{1}),\dots ,f(t_{n}),\nabla f(t_{1}),\dots ,\nabla f(t_{n}),
	\nabla ^{2} f(t_{1}),\dots ,\nabla ^{2} f(t_{n})\right )
	\end{displaymath}
	is degenerate then some non-trivial linear combination of
	\begin{align*}
	&e^{-it_{1}\cdot x},\dots ,e^{-it_{n}\cdot x},
	\\
	&ix_{1}e^{-it_{1}\cdot x},\dots ,ix_{1}e^{-it_{n}\cdot x},ix_{2}e^{-it_{1}
		\cdot x},\dots ,ix_{2}e^{-it_{n}\cdot x}
	\\
	&x_{1}^{2}e^{-it_{1}\cdot x},\dots ,x_{1}^{2}e^{-it_{n}\cdot x},x_{2}^{2}e^{-it_{1}
		\cdot x},\dots ,x_{2}^{2}e^{-it_{n}\cdot x},x_{1}x_{2}e^{-it_{1}
		\cdot x},\dots ,x_{1}x_{2}e^{-it_{n}\cdot x}
	\end{align*}
	is identically zero on the support of $\mu $. Since the $t_{i}$ are distinct,
	we see that the support of $\mu $ cannot contain an open set.
\end{proof}

\begin{lemma}
	\label{a:nondegen3}
	Let $f$ be a Gaussian field satisfying Assumption~\ref{a:minimal}. Then
	the density of saddle points $p_{s}$ defined in Proposition~\ref{p:density existence}
	is non-zero for all $\ell \in \mathbb{R}$.
\end{lemma}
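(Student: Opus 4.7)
The plan is to combine the Kac--Rice formula for saddle points with a short linear-algebraic argument about the quadratic form $\det$ on $2\times 2$ symmetric matrices.

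First I would unfold Proposition~\ref{p:density existence} via Kac--Rice. Applied to the saddle-point measure, and using the standard fact (e.g.\ \cite[Chapter~5]{RFG}) that, for a stationary Gaussian field with constant variance, $\nabla f(0)$ is independent of $(f(0), \nabla^2 f(0))$, the expected number of saddles of $f$ in a unit ball with level in an interval $[\ell,\ell+\epsilon]$ equals
\begin{equation*}
\pi \, p_{\nabla f(0)}(0)\int_\ell^{\ell+\epsilon}\mathbb{E}\!\left(\lvert\det \nabla^2 f(0)\rvert\,\mathds{1}_{\det \nabla^2 f(0)<0}\,\middle|\,f(0)=x\right)p_{f(0)}(x)\,dx.
\end{equation*}
Dividing by $\epsilon\pi$ and letting $\epsilon\to 0$ identifies, via continuity of $p_s$,
\begin{equation*}
p_s(\ell)=p_{\nabla f(0)}(0)\cdot p_{f(0)}(\ell)\cdot\mathbb{E}\!\left(\lvert\det \nabla^2 f(0)\rvert\,\mathds{1}_{\det \nabla^2 f(0)<0}\,\middle|\,f(0)=\ell\right).
\end{equation*}
It thus suffices to show all three factors are strictly positive for every $\ell\in\mathbb{R}$.

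The first two factors are immediate: $p_{f(0)}(\ell)>0$ because $f(0)\sim\mathcal{N}(0,1)$, and $p_{\nabla f(0)}(0)>0$ because $\nabla f(0)$ is a non-degenerate Gaussian vector. For the latter, note that since $\nabla^2 f(0)$ is non-degenerate the covariance $K$ is not identically $1$, so there exists $x$ with $\mathrm{Var}(f(x)-f(0))>0$; Assumption~\ref{a:minimal}(3) then forces the joint vector $(\nabla f(x),\nabla f(0),f(x)-f(0))$ and hence $\nabla f(0)$ to be non-degenerate.

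The main step is the conditional expectation. Identify the space of symmetric $2\times 2$ matrices with $\mathbb{R}^3$ via $(a,b,c)\mapsto\left(\begin{smallmatrix}a&c\\ c&b\end{smallmatrix}\right)$, and consider the Gaussian vector $X=(f_{11}(0),f_{22}(0),f_{12}(0))$. By Assumption~\ref{a:minimal}(2) the covariance $\mathrm{Cov}(X)$ has full rank $3$, so the conditional covariance
\begin{equation*}
\mathrm{Cov}(X\mid f(0))=\mathrm{Cov}(X)-\mathrm{Var}(f(0))^{-1}\mathrm{Cov}(X,f(0))\mathrm{Cov}(X,f(0))^{T}
\end{equation*}
(a rank-$3$ matrix minus a rank-$\le 1$ matrix) has rank at least $2$. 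Therefore $(X\mid f(0)=\ell)$ is a non-degenerate Gaussian on some affine subspace $V_\ell=p_\ell+V_0\subset\mathbb{R}^3$ of dimension $d\ge 2$, with positive density with respect to $d$-dimensional Lebesgue measure on $V_\ell$.

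It remains to show that $\{Q<0\}\cap V_\ell$ has positive Lebesgue measure in $V_\ell$, where $Q(a,b,c)=ab-c^2$. The symmetric bilinear form associated to $Q$ has matrix $\left(\begin{smallmatrix}0&1/2&0\\ 1/2&0&0\\ 0&0&-1\end{smallmatrix}\right)$ with eigenvalues $1/2,-1/2,-1$; hence $Q$ is non-degenerate of signature $(1,2)$. By the elementary theory of real quadratic forms, the maximal dimension of a subspace on which $Q\ge 0$ equals the positive index, namely $1$. In particular, on the $d\ge 2$-dimensional linear subspace $V_0$ there exists $v\in V_0$ with $Q(v)<0$, and then
\begin{equation*}
Q(p_\ell+tv)=Q(p_\ell)+2tB(p_\ell,v)+t^2Q(v)\longrightarrow -\infty\quad\text{as } |t|\to\infty,
\end{equation*}
so $\{Q<0\}\cap V_\ell$ contains a nonempty open subset of $V_\ell$ and thus has positive Gaussian mass. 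Since $|\det|>0$ strictly on this event, the conditional expectation is positive, completing the proof.

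The main obstacle is the last linear-algebraic step: one must verify that the restriction of the quadratic form $\det$ to the affine support $V_\ell$ of the conditional distribution genuinely takes negative values. The signature computation for $Q$ makes this clean, but it is crucial that we obtained $\dim V_\ell\ge 2$ rather than $\ge 1$, since on a generic $1$-dimensional affine subspace $Q$ could well be non-negative.
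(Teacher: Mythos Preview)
Your proof is correct and follows essentially the same route as the paper: express $p_s(\ell)$ via Kac--Rice, use independence of $\nabla f(0)$ from $(f(0),\nabla^2 f(0))$, and then argue that the conditional covariance of $\nabla^2 f(0)$ given $f(0)$ has rank at least $2$ (full rank minus a rank-one correction). The paper then simply asserts that this forces the conditional support of $\det\nabla^2 f(0)$ to be all of $\mathbb{R}$; you instead give a clean signature argument showing that $Q(a,b,c)=ab-c^2$ has signature $(1,2)$, so any affine subspace of dimension $\ge 2$ meets $\{Q<0\}$ in a set of positive measure. Your version is the more careful one: the paper's ``support is $\mathbb{R}$'' claim is slightly stronger than what is actually justified (on a $2$-dimensional conditional support one could in principle have $Q\le 0$ everywhere, which would still give the lemma but not full support), whereas your argument establishes exactly what is needed. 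You also make explicit the factor $p_{\nabla f(0)}(0)$ and its positivity, which the paper's displayed formula suppresses.
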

\begin{proof}
	By the Kac-Rice theorem (Corollary 11.2.2 of \cite{RFG}), and the independence
	of $\nabla f(0)$ and $(f(0), \nabla ^{2} f(0))$,
	\begin{equation}
	\label{e:ps}
	p_{s}(\ell ) = \mathbb{E}\left [\left \lvert \det \nabla ^{2} f(0)
	\right \rvert \mathds{1}_{\det \nabla ^{2} f(0)<0} \, \middle | \, f(0)=
	\ell \right ] p_{f(0)}(\ell ).
	\end{equation}
	We note that by Gaussian regression
	\begin{displaymath}
	\mathrm{Cov}\left (\nabla ^{2} f(0) \, \middle | \, f(0) = \ell
	\right )=\mathrm{Cov}\left (\nabla ^{2} f(0)\right )-\mathrm{Cov}\left (
	\nabla ^{2} f(0),f(0)\right )\mathrm{Cov}\left (\nabla ^{2} f(0),f(0)
	\right )^{t}
	\end{displaymath}
	where $\mathrm{Cov}\left (\nabla ^{2} f(0)\right )$ is a three by three
	matrix and $\mathrm{Cov}\left (\nabla ^{2} f(0),f(0)\right )$ is a three-dimen\-sional
	row vector. Since we assume that $\nabla ^{2} f(0)$ is non-degenerate,
	the conditional covariance matrix above is the difference between a rank
	three and rank one matrix, so must have rank at least two. Therefore
	$(\nabla ^{2} f(0)|f(0)=\ell )$ must be supported on either a two or three
	dimensional (affine) subspace of $\mathbb{R}^{3}$. This implies that the
	support of
	\begin{equation}
	\label{e:det}
	\left (\det \nabla ^{2} f(0) \, \middle | \, f(0) = \ell \right )
	\end{equation}
	is $\mathbb{R}$, and hence by \eqref{e:ps} $p_{s}(\ell )>0$ for all
	$\ell \in \mathbb{R}$.
\end{proof}

\printbibliography

\end{document}